\documentclass{article}
\usepackage[utf8]{inputenc}

\usepackage{amsmath,dsfont,amsthm,url,datetime,subcaption,cases,mathtools,amssymb,natbib,bbm,bm}

\usepackage[pdfencoding = auto,psdextra]{hyperref}
\usepackage[margin=1.2in]{geometry} 
\usepackage{tikz}
\usepackage{subfiles}
\usepackage{enumitem}

\usepackage{pgfplots}
\usepackage{pgf-pie}

\definecolor{tikzcolor1}{HTML}{238716}
\definecolor{tikzcolor2}{HTML}{BA2FAA}
\definecolor{tikzcolor3}{HTML}{F59F4E}

\usepackage{todonotes}
\usepackage{macros}
\usepackage{cleveref}
\usepackage[linesnumbered,lined,ruled,commentsnumbered]{algorithm2e}

\newtheorem{lemma}{Lemma}
\newtheorem{theorem}{Theorem}
\newtheorem{cor}{Corollary}

\newtheorem{example}{Example}

\newtheorem{definition}{Definition}

\usepackage{tikzsymbols}

\title{Optimal Posterior E-values with Non-Convex Parameter Sets with Applications to Voting Systems}
\date{}
\author{Adrienne Tuynman and Timothée Mathieu}
\begin{document}
		\maketitle
                \begin{abstract}
                  We are interested in conducting political polls sequentially, so that one can stop acquiring data as soon as possible while safely yielding statistically significant results. Building off e-values, which have recently become a useful tool to create sequential testing methods, we develop a theory of posterior optimal e-values. We use voting as a convenient example on which to illustrate our method.

                  First, we design statistical tests for Condorcet and Borda voting system, and also for Schulze voting system which we are the first to tackle statistically. Then, we study the construction of optimal sequential e-values in the deceptively simple setting of multivariate Bernoulli data, with general composite null and alternative hypothesis sets $\mathcal{H}_0$ and $\mathcal{H}_1$. We give a way to compute these e-values using an efficient Frank-Wolfe algorithm, giving a pretty general way to compute Reverse Information Projections, even when $\mathcal{H}_0$ corresponds to a non-convex parameter set. Finally, we illustrate the efficiency, both in terms of power and sample size of our method. We compare with state of the art in both simulated and real data experiments, with application to French 2022 presidential election data. 

		\textit{Keywords: E-values, preference voting, power, posterior optimal, bayesian sequential statistics}
		\end{abstract}

	\section{Introduction}

We can only test what we can measure. In some fields, that is not an issue. It is easy to measure how far a robot can run, how many patients recover after being given a certain drug, or how much corn grows in a field after using a specific seed. The solution is to craft new numbers. It might not be feasible to ask patients to rate how good they feel about a treatment (such information may be very noisy, and not objective at all), it is way easier to ask them to compare two alternatives. During a poll, it is hard to ask voters to give politicians a grade out of ten as different citizens will not be using the same grading curve, but it is easy to ask them to compare two candidates. The frequency of one option being preferred to another is something that can be computed, estimated, and that is useful. We focus here on a simple problem. With a finite set of $N$ candidates, we receive comparison data: for two options $i$ and $j$, we are told how many times $i$ was preferred to $j$ and vice versa.

Our goal in this article will be to design sequential hypothesis tests based on e-values in the context of preference data. The idea is to collect data $X^1,X^2,\dots$ sequentially, and to stop as soon as we can take a decision. The decision is either to say that candidate $i$ is a winner for the chosen voting system or that he is not a winner. 

Our goals are twofold. First we design statistical tests for Condorcet and Borda voting system, and also for Schulze voting system which we are the first to tackle statistically. Our second goal is to study the construction of optimal sequential e-values in the deceptively simple setting of multivariate Bernoulli data with composite (and possibly non-convex) null and alternative hypothesis sets $\Hyp_0$ and $\Hyp_1$. The setting of votes is particularly adapted to display a variety of form for $\Hyp_0$ and $\Hyp_1$, with Condorcet being the easiest to the more complicated Schulze voting method. Contrary to adaptive settings such as duelling bandits (see \cite{bengsPreferencebasedOnlineLearning2021} for a survey on that topic), we do not choose which two options are compared. We will assume that we receive the same number of comparisons for every pair of options. 

We first show how our preference voting motivation can be cast as a statistical hypothesis problem on the preference matrix. Then, we develop a new (to our knowledge) methodology to construct e-variables as an extension of the GRO criterion to composite $\Hyp_1$. Using a Bayesian point of view, we assign a prior over $\Param_1$ the set of parameters of distributions in $\Hyp_1$ and define
$$G_t(E,\prior,\tau):=\E_{\param \sim \prior}\left[ \E_{X^{:t} \sim P_{\param}^{\otimes t}}\left[ \log(E(X^{:t\wedge\tau} )) \ind\{\param \in \Param_1\} \right]\right],$$
where $\prior$ is a prior on $\Param_1$, $\tau$ is an arbitrary stopping time and $X^{:t}:=X^1,\dots,X^t$ is an i.i.d. sample from $P_\param$. We show that among test supermartingales, there exists one which optimises $G_t(E,\prior,\tau)$. We then give its expression as product of one-step e-values, each a one-step GRO e-value with respect to the posterior distribution at time $t$. We call the resulting e-value POE (Posterior Optimal E-value). The POE is defined quite generally and is not specific to our voting scenario. 

In a second time, we show how to use POE to design both a fixed confidence and fixed sample size sequential test, and we give theoretical guarantees for both of these tests showing optimality in terms of expected stopping time. To this end, we also give concentration inequalities showing that POE converges to the GRO e-variable asymptotically. Then, we give a version of Frank-Wolfe algorithm adapted to our problem that allow us to compute the KL projection needed to define POE. This Frank-Wolfe algorithm is quite general and could have an independent interest as a way to perform a Reverse Information Projection when the distributions have finite support.

Finally, we illustrate the usage of POE for our setting in both simulated and real data experiments. In a simulated setting we compare the empirical power and stopping time of our tests to the state of the art (Universal Inference~\citep{wassermanUniversalInference2020}, GLRT and the e-value from \cite{turnerExactAnytimevalidConfidence2022}), we show how our method can in addition benefit from the knowledge of prior data. To conclude, we apply our method to political poll, using data from 2022 French Presidential elections, in an attempt to answer to the question ``who would have been elected president if French citizens had used Borda voting method?''.

        \paragraph{Prior works in statistics of social choice}

Several studies have investigated on statistical hypothesis testing on voting system, notably \cite{taplinStatisticalAnalysisPreference1997} use generalized likelihood ratio test to test for Condorcet winner. More recently \cite{xiaOptimalStatisticalHypothesis2020} investigate Neyman-Pearson type results Condorcet voting as well as for Mallows model.

A more parametric approach using ranking models, such as Bradley-Terry model, have also been used to study voting~\cite{makurMinimaxHypothesisTesting2024,sahaPACInstanceOptimalSample2020,rastogiTwoSampleTestingRanked2020}. Most of these works are centered on testing for Condorcet winner, and a few on other types of voting.
\paragraph{Related works in duelling bandits and online preference learning}

In duelling bandits~\cite{chenCombinatorialPureExploration2020,haddenhorstTestificationCondorcetWinners2021,sahaVersatileDuelingBandits2022,haddenhorstIdentificationGeneralizedCondorcet2021}, the problem is more active and the statistician decides on which pair of preference to get next. See in particular the survey in \cite{bengsPreferencebasedOnlineLearning2021} to get an overview on duelling bandits. However, obtaining optimality results as strong as the ones we present in this article remains hard in general in the duelling bandit setting.

\paragraph{Prior works in sequential testing with e-values}

General introduction to e-values can be found in~\cite{ramdasHypothesisTestingEvalues2025} as well as in the survey \cite{ramdasGameTheoreticStatisticsSafe2023}. Most work on e-values for voting are concentrated around auditing~ \cite{spertusSweeterSUITESupermartingale2022,waudby-smithRiLACSRiskLimitingAudits2021}, which does not align with the purpose of this article. 

The work closest to our own is the one of Turner and Grünwald in~\cite{turnerExactAnytimevalidConfidence2022} (see also~\cite{turnerGenericEvariablesExact2024}) in which, building on the theory of Growth Rate Optimal (GRO) e-values from \cite{grunwaldSafeTesting2023}, the authors study a construction of approximate GRO e-values that can be applied in our setting (albeit in dimension 2). Our method (POE) can be seen as an extension of \cite{turnerExactAnytimevalidConfidence2022,turnerGenericEvariablesExact2024} to more general non-convex and multivariate parameter sets. Note that compared to \cite{turnerExactAnytimevalidConfidence2022,turnerGenericEvariablesExact2024} we asks additionally that each sample give the complete preferences (i.e. the sample size for each Bernoulli is the same). 

More recently, in the case of simple alternative hypothesis, \cite{grunwaldOptimalEValuesExponential2025,haoEValuesExponentialFamilies2025} investigate the case of exponential families, giving condition under which the solution of the Reverse Information Projection (RIPr) problem is simple and result in a likelihood ratio test between with a most-confusing instance belonging to $H_0$. See also recent works on testing against a composite alternative in \cite{arnoldOptimalEvaluesTesting2026} in the context of testing by betting on the mean of distributions.

        \section{Statistical test for preference voting}
Our first goal is to set-up a statistical testing problem of the form $\Hyp_0 = \{P_\param^{\otimes T}, \param \in \Param_0\}$ against $\Hyp_0 = \{P_\param^{\otimes T}, \param \in \Param_1\}$ and decide whether data are sampled from a distribution in $\Hyp_0$ or in $\Hyp_1$. We first give the formal definition of the different $\Hyp_0$ we will consider in this article, drawn from preference voting systems. 
\subsection{Different kinds of winners}

\begin{definition}[Preference matrix]\label{def:matrixpref}
	Sampling a comparison between options $i$ and $j$ means sampling from a Bernoulli of parameter $\pr{i,j}$, $\Ber(\pr{i,j})$, where $\pr{i,j}$ is the probability that $i$ is preferred to $j$. An instance with $N$ options is thus defined by a preference matrix $\PrM = \left(\pr{i,j}\right)_{1\leq i,j\leq N}$, where for any $i\neq j$, $\pr{i,j}=1-\pr{j,i}$. $\pr{i,i}$ is defined, arbitrarily, as $0.5$.
\end{definition}

While the matrix is of size $N\times N$, it is uniquely defined by the $\pr{i,j}$ with $i<j$. Therefore, complete samples will be of size $D={N\choose 2}$, and $D$ will be referred to as the dimension of the problem.

In the rest of the article, we refer to $\Param^i$ as the set of preference matrices such that $i$ is a winner for the chosen voting method, and to $\Param$ as the set of all possible preference matrices. In \Cref{fig:constraints} we plot  $\Param^1$ the parameter sets in $\Param$ of the three voting systems that we describe below when $D=N=3$.

\begin{figure}[h]
  \begin{center}
     \subfloat[Condorcet\label{fig:condorcet}]{\includegraphics[width=0.33\textwidth,trim={10cm 2cm 10cm 2cm},clip]{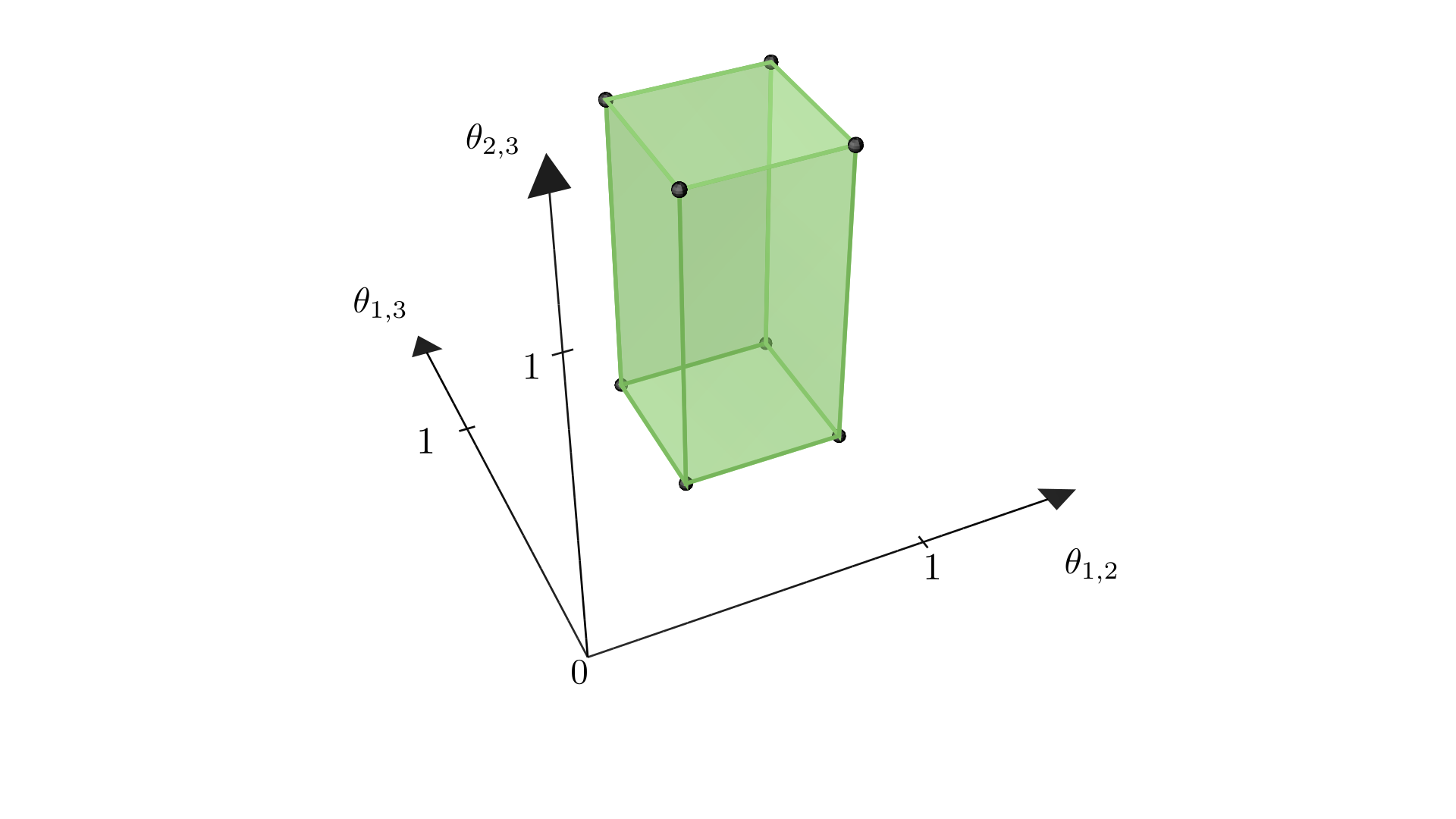}}
    \subfloat[Borda\label{fig:borda}]{\includegraphics[width=0.33\textwidth,trim={10cm 2cm 10cm 2cm},clip]{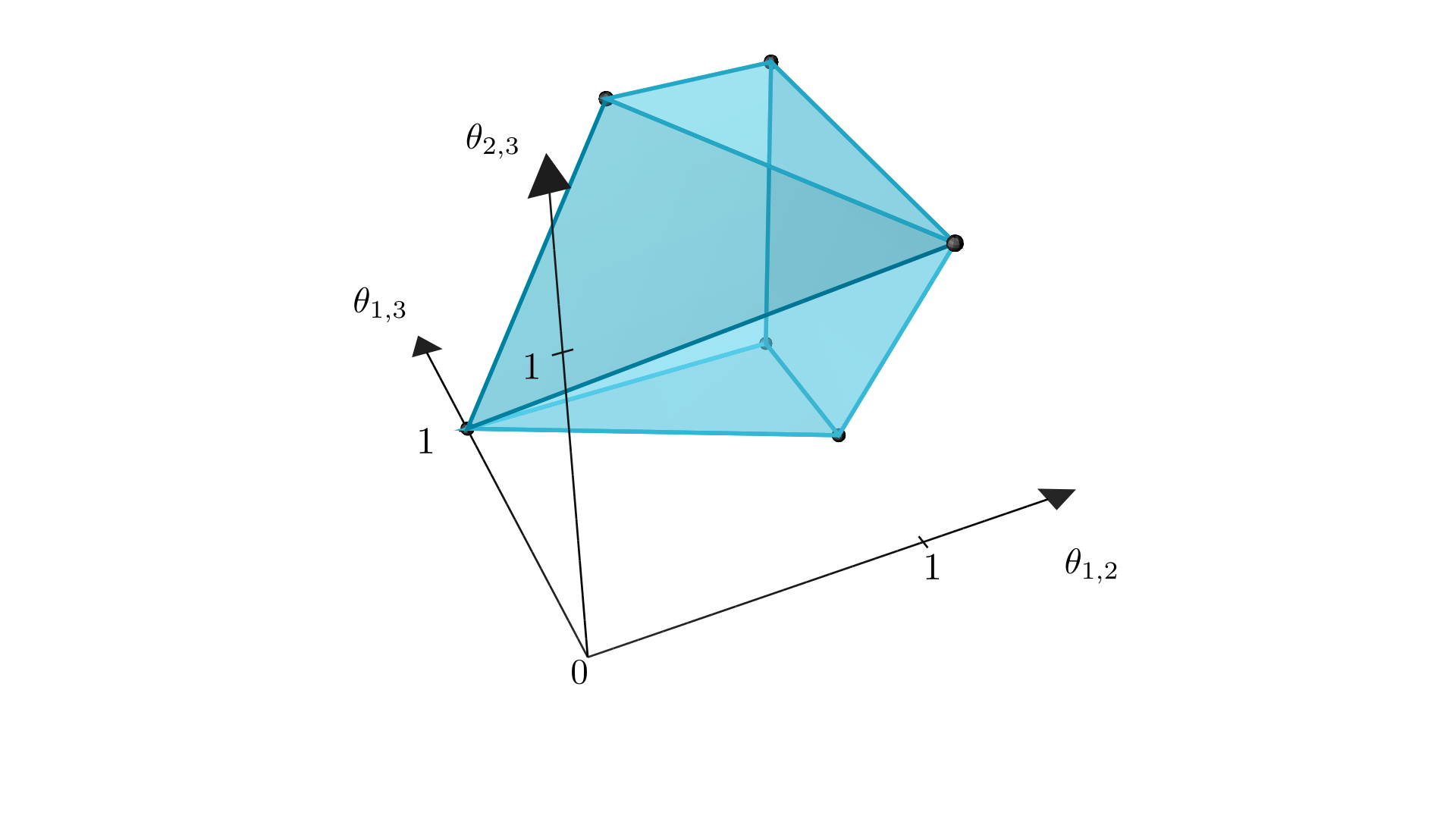}}
    \subfloat[Schulze\label{fig:schulze}]{\includegraphics[width=0.33\textwidth,trim={10cm 2cm 10cm 2cm},clip]{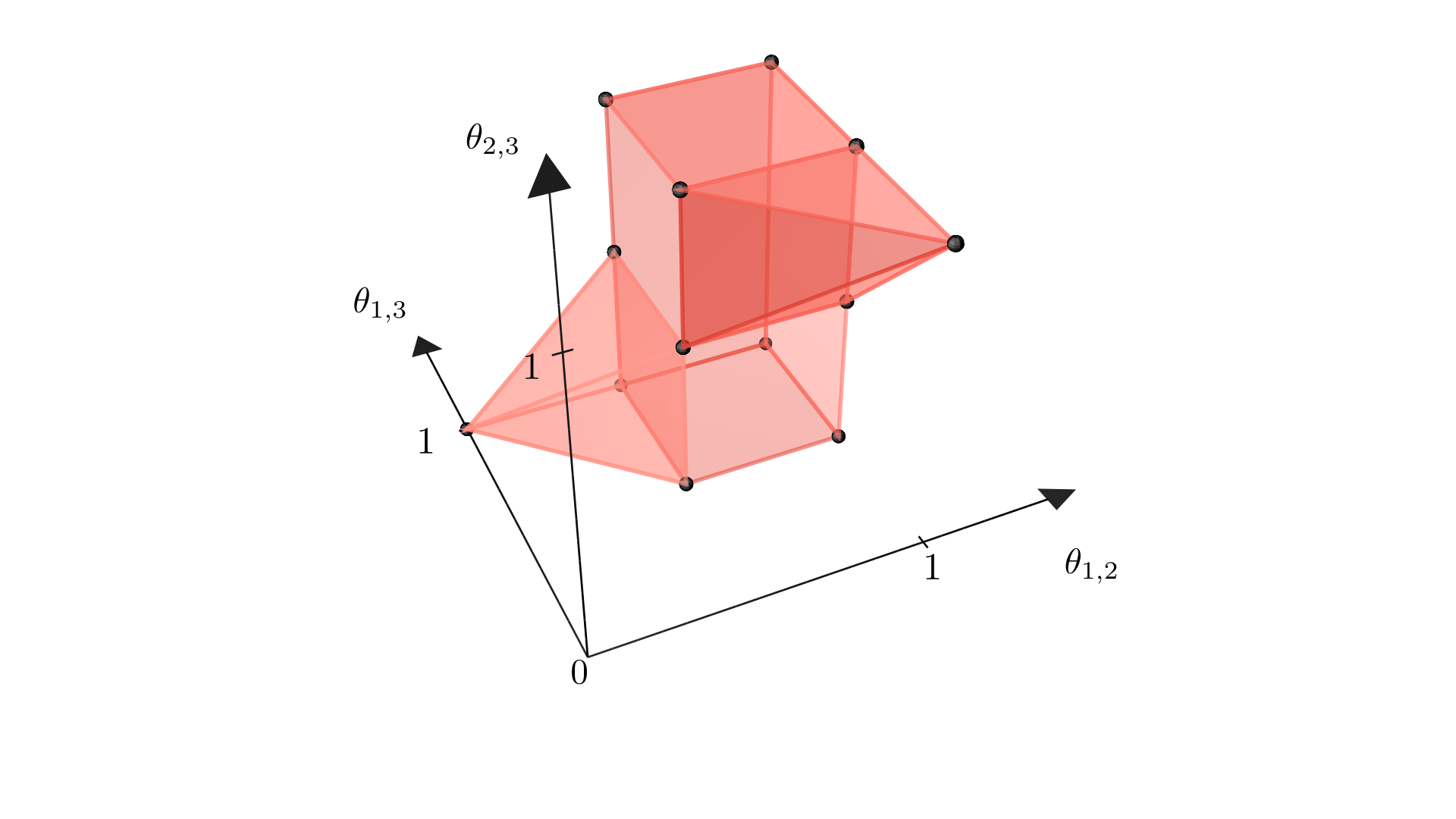}}
  \end{center}
\caption{Representation of the parameter set $\Param^1$ for which $1$ is winner, for the three voting systems we consider. \label{fig:constraints}}
\end{figure}

\subsubsection{Condorcet}

When discussing preference voting, the first option that comes to mind is the Condorcet winner. The concept is simple: if an option $i$ is preferred to any option $j$, then it is a Condorcet winner.

\begin{definition}[Condorcet]
	If for an option $i^\star$, \[\forall i\neq i^\star,\, \pr{i^\star,i}\ge 1/2 \] then $i^\star$ is a Condorcet winner.
\end{definition}

In \Cref{fig:condorcet}, we show what $\Param^1$ looks like with three candidates. This definition makes sense, but has a major drawback: the Condorcet winner does not always exist, i.e. $\bigcup_{i=1}^N \Param^i \neq \Param$. We can spot that in the figure: adding up thrice $\Param^1$ rotated to represent $\Param^2$ and $\Param^3$ will not yield the entire $[0,1]^3$ cube. This is what we call Condorcet's voting paradox.

We add one more condition to consider: we want the result to be determined uniquely by the preference matrix $\PrM$. This allows us to only use this matrix to quantify the election results, and to conduct tests based on subsets $\Param^i$ of matrices, see~\Cref{sec:instant_runoff} for an example of voting system that cannot be deduced from the preference matrix.

\subsubsection{Borda}

In the Borda system, voters assign points to the candidates: $N-1$ points to the candidate they prefer, then $N-2$ to the second, all the way until $0$ points for the candidate they least like. The winner is the candidate who received the most total points. 
While this formulation may sound like the precise order is important, another way to look at it is that each won duel grants one point.

\begin{definition}
	If for an option $i^\star$, \[\forall i\neq i^\star, \sum_{j}\pr{i^\star,j} \geq \sum_j \pr{i,j}\] then $i^\star$ is a Borda winner.
\end{definition}

We represent $\Param^1$ in \Cref{fig:borda}. While there might be tied candidates, the set of preferences matrices $\PrM$ inducing ties is negligible (when selecting a preference matrix uniformly at random, drawing one with multiple winners almost surely does not happen); and Borda winners can be determined from the preference matrix. This makes it a great candidate for efficient voting. Moreover, the Borda procedure also has an important property: the winner set is convex.

\begin{lemma}
	For $i\in [N]$, the set $\Param^i$ of matrices $\PrM$ that describe a preference vote where $i$ is a Borda winner is convex.
\end{lemma}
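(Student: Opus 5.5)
The plan is to write $\Param^i$ as an intersection of the convex set $\Param$ with finitely many closed half-spaces, all expressed in linear functions of the matrix entries. Convexity then follows at once, since intersections of convex sets are convex.

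First I will check that $\Param$ itself is convex. View a preference matrix $\PrM$ as a point of $\mathbb{R}^{N\times N}$. The constraints defining $\Param$ are:
\begin{itemize}
\item $0\le \pr{j,k}\le 1$ for all $j,k$;
\item $\pr{j,k}+\pr{k,j}=1$ for $j\neq k$;
\item $\pr{j,j}=1/2$.
\end{itemize}
Each of these is an affine equality or inequality. So $\Param$ is a polytope, and in particular it is convex.

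Next, for each $i'\neq i$, define the linear functional
\[
L_{i'}(\PrM)=\sum_{j}\pr{i,j}-\sum_j \pr{i',j}.
\]
By the definition of a Borda winner,
\[
\Param^i=\Param\cap\bigcap_{i'\neq i}\{\PrM: L_{i'}(\PrM)\ge 0\}.
\]
Each set $\{L_{i'}\ge 0\}$ is a closed half-space, hence convex.

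To make this concrete, take $\PrM,\PrM'\in\Param^i$ and $\lambda\in[0,1]$. By linearity,
\[
L_{i'}(\lambda\PrM+(1-\lambda)\PrM')=\lambda L_{i'}(\PrM)+(1-\lambda)L_{i'}(\PrM')\ge 0 .
\]
The convex combination also stays in $\Param$ by the previous step. Hence $\lambda\PrM+(1-\lambda)\PrM'\in\Param^i$.

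There is no real obstacle here. The one point needing care is the parametrisation. If one prefers the reduced coordinates $(\pr{j,k})_{j<k}\in[0,1]^D$, substituting $\pr{k,j}=1-\pr{j,k}$ and $\pr{j,j}=1/2$ turns each $L_{i'}$ into an affine function of the free coordinates. Affine half-spaces are still convex, so the same argument goes through in the $D$-dimensional cube shown in \Cref{fig:borda}.
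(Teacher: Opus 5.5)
Your proposal is correct and follows the same route as the paper: the Borda condition is a family of linear inequalities $\sum_j \pr{i,j}\ge\sum_j\pr{i',j}$, so $\Param^i$ is an intersection of half-spaces (with the convex set $\Param$), hence convex. Your explicit check that $\Param$ is a polytope, and your remark on the reduced coordinates in $[0,1]^D$, simply spell out details the paper leaves implicit.
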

The proof follows directly from the definition of Borda winner has a linear condition which implies that Borda winner is an intersection of half-spaces, hence it is convex. 

However, if the Condorcet winner exists, it might not necessarily be elected by a Borda process, as we can see in the figure.
We will therefore explore one last voting method.

\subsubsection{Schulze}

In the Condorcet winner method, it does not matter by \emph{how much} a candidate $i$ beats another candidate $j$; all that matters is that it does beat it. Moreover, if $j$ goes on to beat a candidate $\ell$, then the Condorcet method does not deduce anything about the relationship from $i$ to $\ell$. The Schulze method aims to fill these two gaps.

\begin{definition}[The Schulze method \citep{schulzeNewMonotonicCloneindependent2011}]
	A path $c$ from candidate $i$ to candidate $j$ is a sequence of candidates $c_1,\dots,c_n$ such that $i=c_1$, $j=c_n$, and for any $\ell,m\in [n]$, if $\ell\neq m$, then $c_\ell\neq c_m$. We denote $i\rightarrow j$ the set of paths from $i$ to $j$. The strength of path $c$ is \[\mathrm{Str}(c) = \min_{m\in [n-1]} \pr{c_m,c_{m+1}} . \] In other words, a path is only as strong as its weakest link.
	We say that $i$ beats $j$ in the Schulze method if \[\max_{c\in i\rightarrow j} \mathrm{Str}(c) \geq \max_{c\in j\rightarrow i} \mathrm{Str}(c) .\]
	$i$ is the Schulze winner if it beats every other candidate. In particular, a Condorcet winner is necessarily a Schulze winner.
\end{definition}

While the Schulze winner is determined by the preference matrix, it is difficult to represent $\Param^i$ with this definition. In fact, it is not even convex, as we will see numerically. However, we find that it is possible to define convex subsets of $\Param$ that share a Schulze winner, through defining orderings. Indeed, what matters for the Schulze method is not how much a path beats another, but merely if it does; meaning that what matters is the ordering of $\pr{i,j}$. We summarise this observation, and the consequences from it, in the following lemma.

\begin{lemma}
	For $\PrM$ and $\PrM'$ two preference matrices, we say that they share an ordering if
	\begin{itemize}[itemsep=0.1pt,parsep=0.1pt,topsep=0.1pt]
		\item for all $i,j,k,l$ such that $i\neq j$ and $k\neq l$, we have $\pr{i,j}\geq \pr{k,l} \iff \pr{i,j}'\geq \pr{k,l}'$
		\item for all $i\neq j$, $0.5 \geq \pr{i,j} \iff 0.5 \geq \pr{i,j}'$.
	\end{itemize}
	If they do, then $\PrM$ and $\PrM'$ share a Schulze winner. 
	
	Moreover, if $\PrM$ is such that, for any $i\neq j$ and $k\neq l$, $(i,j)\neq (k,l) \implies \pr{i,j}\neq\pr{kl}$ and $\pr{i,j}\neq 0.5$ (it has a strict ordering), then $\PrM$ admits only a single Schulze winner.
	
	Finally, for any such matrix, the set of preference matrices that share an ordering with it is convex.
\end{lemma}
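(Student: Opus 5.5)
The plan is to prove the three claims in order. The only structural fact needed is that the Schulze comparison between two candidates is a Boolean combination of comparisons between entries of $\PrM$.

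\emph{First claim.} For fixed candidates $i,j$, each path $c\in i\rightarrow j$ has strength $\mathrm{Str}(c)=\min_m \pr{c_m,c_{m+1}}$, which is always equal to one specific entry $\pr{k,l}$ of $\PrM$. Which entry attains the minimum depends only on the relative order of the entries along the path. If $\PrM$ and $\PrM'$ share an ordering, the same index pair $(k,l)$ attains the minimum for both matrices, ties included, since ``$\geq$'' is preserved in both directions. Similarly, $\max_{c\in i\rightarrow j}\mathrm{Str}(c)$ is attained by an entry $\pr{k,l}$, and that same pair $(k,l)$ attains the maximum under $\PrM'$. So the strongest-path values from $i$ to $j$ and from $j$ to $i$ correspond to two index pairs $(k,l)$ and $(k',l')$ that are the same for both matrices. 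The statement ``$i$ beats $j$'' is the statement $\pr{k,l}\geq\pr{k',l'}$, which by the first condition holds for $\PrM$ if and only if it holds for $\PrM'$. The set of Schulze winners is therefore identical for the two matrices. The second condition (comparison with $0.5$) is not needed for this claim but is harmless.

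\emph{Second claim.} Under a strict ordering I would first show that the strongest-path strengths $s(i,j)$ satisfy $s(i,j)\neq s(j,i)$ for $i\neq j$. Since $s(i,j)$ and $s(j,i)$ are entries of $\PrM$, equality would mean they are the same entry $\pr{k,l}$, lying on a strongest path in both directions. Concatenating the two paths then gives a walk through the edge $(k,l)$ twice, so both $i\to j$ and $j\to i$ paths use $(k,l)$. Rather than pursue this combinatorics, I would invoke the classical result of \citet{schulzeNewMonotonicCloneindependent2011}: the relation ``$s(i,j)>s(j,i)$'' is transitive. Given transitivity, the binary relation is complete and antisymmetric, hence a strict total order, and it has a unique maximum.

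To handle $s(i,j)\neq s(j,i)$ directly, suppose both equal $\pr{k,l}$. The edge $(k,l)$ appears on a best $i\to j$ path, so some best $j\to i$ path must contain the edge $(k,l)$ rather than its reverse $(l,k)$. This is possible because $\pr{l,k}=1-\pr{k,l}$ is a different value. Then $s(k,l)$-type arguments give a path $i\to\cdots\to k$ and $l\to\cdots\to i$, both with strength greater than $\pr{k,l}$, and combining them yields a contradiction with the maximality of the strongest path. This is the step I expect to be the main obstacle. I would cite Schulze's transitivity lemma to avoid redoing it, and check only that strictness removes ties.

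\emph{Third claim.} The set of matrices sharing an ordering with a fixed $\PrM$ is cut out by constraints of three forms: $\pr{i,j}'>\pr{k,l}'$ or $\pr{i,j}'=\pr{k,l}'$ (or $<$), $\pr{i,j}'\lessgtr 0.5$, and the linear constraints $\pr{i,j}'=1-\pr{j,i}'$ together with $\pr{i,j}'\in[0,1]$. Each of these is an open or closed half-space or a hyperplane in the entries. The set is therefore an intersection of convex sets and hence convex.
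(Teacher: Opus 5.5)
Your first and third parts are correct; the paper states this lemma without proof, so there is no route to compare against. Because the two matrices induce the same weak order on the off-diagonal entries, every minimum along a path and every maximum over paths is attained at the same index pair. Hence each comparison $s(i,j)\geq s(j,i)$ is preserved. The ordering class is an intersection of hyperplanes and open or closed half-spaces, so it is convex.

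The gap is in the second part. Your claim that a strict ordering forces $s(i,j)\neq s(j,i)$ for all $i\neq j$ is false once $N\geq 4$. Take $\pr{1,2}=0.55$, $\pr{1,3}=0.7$, $\pr{1,4}=0.35$, $\pr{2,3}=0.9$, $\pr{2,4}=0.2$, $\pr{3,4}=0.6$, with the other entries given by $\pr{j,i}=1-\pr{i,j}$. The twelve off-diagonal entries are distinct and none equals $0.5$. Yet $s(1,2)=s(2,1)=0.6$, attained by $1\to3\to4\to2$ and $2\to3\to4\to1$ through the same directed edge $(3,4)$. So the Schulze relation is not complete, and a strict total order with a unique maximum does not follow from transitivity.

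Your direct sketch cannot repair this. The strong pieces $i\to\cdots\to k$ and $l\to\cdots\to i$ only yield a strong route from $l$ to $k$, which contradicts nothing: the only known link from $k$ to $l$ is the weak edge itself. For $N=3$ your claim does hold, because such a tie needs four distinct candidates $i,j,k,l$. The lemma, however, is stated for general $N$.

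The missing idea is that uniqueness must use the fact that both candidates are winners, not only the path structure.
\begin{itemize}
\item Suppose $a\neq b$ are both Schulze winners, so $s(a,b)=s(b,a)=:\theta$.
\item By strictness, $\theta=\pr{k,l}$ for a single directed edge $(k,l)$. This edge lies on a best $a\to b$ path and on a best $b\to a$ path, and every other edge of these two paths has value $>\theta$. In particular $k,l\notin\{a,b\}$.
\item The tail $l\to\cdots\to a$ of the best $b\to a$ path gives $s(l,a)>\theta$.
\item On the other hand $s(a,l)\leq\theta$. An $a\to l$ path of strength $>\theta$, followed by the tail $l\to\cdots\to b$ of the best $a\to b$ path, would contain an $a\to b$ path of strength $>\theta$, contradicting $s(a,b)=\theta$.
\item Hence $l$ strictly beats $a$, contradicting that $a$ is a winner.
\end{itemize}
This is exactly what happens in the example above: $l=4$ strictly beats both $1$ and $2$, and is the unique winner. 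Existence of at least one winner then follows, as you say, from Schulze's transitivity of the strict relation, since an acyclic relation on a finite set has a maximal element.
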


This gives us a way to describe $\Param^i$ as a reunion of convex spaces, one per possible strict ordering. Of course, not all orderings are possible, due to $\pr{i,j}=1-\pr{j,i}$. However, It is possible to count the number of possible orderings, by noticing that each ordering can be described exactly by the order of $\pr{\{i,j\}} = \min\{\pr{i,j},\pr{j,i}\}$, and for each $i,j$, which one is bigger than the other. The number of possible strict orderings is therefore $D! \times 2^D$. As $D$ grows, the amount of convex components grows very quickly, which is a challenge for any statistical testing method.

In \Cref{fig:schulze}, we see $\Param^1$ for $N=3$ candidates. Note that, although we then have $48$ possible strict orderings, thus $16$ orderings in $\Param^1$, we only see three convex components in this figure. Indeed, if $\pr{1,2}>0.5$ and $\pr{1,3}>0.5$, then we already know that 1 is Condorcet winner, and thus Schulze winner; the other comparisons do not matter, and thus we can merge strict orderings together. This approach of merging orderings together should be applicable to higher dimensions; however, we have no optimal way of conducting the mergings, and heuristic approaches have so far failed to significantly reduce the number of convex components. As such, in this article, we will limit ourselves to studying Schulze method for $N=3$.

\section{E-values for preference voting}

\subsection{Basics of e-values}

A recent way to conduct sequential hypothesis testing are e-variables, which are defined in a deceptively simple way.

\begin{definition}[E-variable]
	Let $Y\in\mathcal{Y}$ be the data sampled from an unknown distribution $P$. A nonnegative random variable $E:\mathcal{Y}\rightarrow [0,\infty]$ is an e-value for the null hypothesis $\Hyp_0$ if, for any distribution $P\in \Hyp_0$, \[ \E_{Y\sim P}[E(Y)]\leq 1 .\]
\end{definition}

E-values are more resilient than $p$-values. First of all, they are easier to interpret: since under $\Hyp_0$, we expect to see a value smaller than 1, then a big e-value makes it more likely for us to be under $\Hyp_1$. The e-value can therefore be seen as the amount of evidence against $\Hyp_0$. That makes them easy to explain and interpret, even without a threshold. 
Secondly, it becomes easy to natively combine independent experiments. For that, it suffices to notice that the product of two e-values remains an e-value. See \cite{grunwaldNeymanPearsonEvaluesEnable2024} or \cite[Section 1.2]{ramdasHypothesisTestingEvalues2025} for a more in-depth comparison between e-values and p-values.

To have useful e-values, that will have some power, it is good to have ``big'' e-values under $\Hyp_1$; e-values that are bigger than all the other ones under $\Hyp_1$. One way to formalise this is to look for e-values that are \emph{numeraire e-values}: for $Q$ a distribution, we consider $E^\star$, which satisfies for any other e-value $E$ that $\E_Q[E/E^\star]\leq 1$. This is equivalent to maximising the expectation under $Q$ of $\log E$ \cite[Proposition 6.3]{ramdasHypothesisTestingEvalues2025}, called the e-power \cite[Definition 3.11]{ramdasHypothesisTestingEvalues2025}. This log-optimality criterion might seem like an arbitrary choice; however, since we often want to multiply e-values together to yield other e-values, log-optimality is more convenient to consider than, for example, maximising the expectation of $E$ itself. 

The following result guarantees the existence of the optimal e-value under singleton $\Hyp_1$. 

\begin{theorem}[GRO, from~\cite{grunwaldSafeTesting2023}]\label{th:gro}
	Assume that $\Hyp_1=\{P_{\param_1}\}$ is a singleton. Then there exists a unique distribution $P_0^\star$
	such that \[E^\star := \frac{\d P_{\param_1}(Y)}{\d P_0^\star(Y)}\] is an e-variable. 
	Additionally, $E^\star$ satisfies, essentially uniquely, \[\sup_{E} \E_{Y\sim P_{\param_1}} [\log E] = \E_{Y\sim P_{\param_1}}[\log E^\star] = \inf_{W_0\in \Prior_0} \KL(P_{\param_1},P_{W_0}) = \KL(P_{\param_1},P_0^\star)\] where the sup over $E$ is taken in the set of e-values over $\Param_0$. 
	
	$E^\star$ is called the Growth Rate Optimal (GRO) e-value for $\param_1$, or the numeraire e-value.
\end{theorem}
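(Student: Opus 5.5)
The plan is the standard reverse-information-projection argument, in the finite-support setting the paper uses. Here $Y$ takes finitely many values, since complete samples are multivariate Bernoulli in $\{0,1\}^D$. Write $\Prior_0$ for the set of priors on $\Param_0$ and $P_{W_0}=\int P_\param\, \d W_0(\param)$ for the corresponding mixtures. Let $\mathcal{C}$ be the set of all such mixtures. Viewed as a subset of the simplex on $\{0,1\}^D$, it is the closed convex hull of the continuous image of $\Param_0$. Hence $\mathcal{C}$ is convex and compact, even when $\Param_0$ itself is not convex (compactness of $\Param_0$, or taking its closure, makes $\mathcal{C}$ compact).

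\textbf{Step 1: existence and uniqueness of $P_0^\star$.} The map $Q\mapsto \KL(P_{\param_1},Q)$ is lower semicontinuous and convex on the simplex. It is also strictly convex in the coordinates $Q(y)$ for $y$ in the support of $P_{\param_1}$, because $-\log$ is strictly convex. A minimiser over the compact set $\mathcal{C}$ therefore exists. The infimum is finite provided some mixture charges the support of $P_{\param_1}$, which we assume; otherwise $E^\star=\infty$ trivially. The minimiser is unique on $\mathrm{supp}(P_{\param_1})$, and this is the sense of uniqueness intended: $P_0^\star$ enters $E^\star$ only through these coordinates.

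\textbf{Step 2: $E^\star$ is an e-variable.} This is the key step. For $\param\in\Param_0$ and $\epsilon\in(0,1)$, set $Q_\epsilon=(1-\epsilon)P_0^\star+\epsilon P_\param\in\mathcal{C}$. Optimality of $P_0^\star$ gives
\[
0\le \frac{1}{\epsilon}\Big(\KL(P_{\param_1},Q_\epsilon)-\KL(P_{\param_1},P_0^\star)\Big)=-\frac1\epsilon\sum_y P_{\param_1}(y)\log\Big(1+\epsilon\Big(\tfrac{P_\param(y)}{P_0^\star(y)}-1\Big)\Big).
\]
Each summand is monotone in $\epsilon$, since $\log(1+\epsilon a)/\epsilon$ decreases in $\epsilon$. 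Letting $\epsilon\to0$, monotone convergence over the finite sum yields
\[
\sum_y P_{\param_1}(y)\Big(\tfrac{P_\param(y)}{P_0^\star(y)}-1\Big)\le 0 .
\]
This says $\E_{P_\param}\big[P_{\param_1}/P_0^\star\big]\le 1$, summing over the support of $P_{\param_1}$, where $P_0^\star>0$. Hence $\E_{Y\sim P_\param}[E^\star]\le1$ for every $\param\in\Param_0$. By linearity the same bound holds for every $P_{W_0}$. The main care needed is on points where $P_0^\star(y)=0$ but $P_{\param_1}(y)=0$ as well; these are excluded with the convention $0/0=0$. Points where $P_{\param_1}(y)>0$ and $P_0^\star(y)=0$ cannot occur, since the KL divergence is finite.

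\textbf{Step 3: optimality.} By construction, $\E_{P_{\param_1}}[\log E^\star]=\KL(P_{\param_1},P_0^\star)=\inf_{W_0}\KL(P_{\param_1},P_{W_0})$. Now take any e-value $E$. By concavity of $\log$ (Jensen),
\[
\E_{P_{\param_1}}\big[\log E-\log E^\star\big]\le\log \E_{P_{\param_1}}[E/E^\star]=\log\E_{P_0^\star}[E]\le 0,
\]
where the last inequality holds because $P_0^\star$ is a mixture of null distributions and $E$ is an e-value for all of them. So $E^\star$ attains the supremum. Strict concavity of $\log$ gives essential uniqueness: equality forces $E/E^\star$ to be constant $P_{\param_1}$-a.s., and the constant must be at most $1$, hence equal to $1$ at the optimum. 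The same Jensen computation also gives the numeraire property $\E_{P_{\param_1}}[E/E^\star]\le1$.
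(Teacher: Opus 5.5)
The paper gives no proof of this statement. It imports it from \cite{grunwaldSafeTesting2023}, which relies on the reverse-information-projection results of \cite{liEstimationMixtureModels1999}; the fully general case is treated in \cite{lardy2024reverse}. Your argument is that standard route: first-order optimality of the KL minimiser gives the e-variable property, and change of measure plus Jensen gives growth optimality. Your Step~2 is the same directional-derivative computation the paper itself uses in the proof of \Cref{th:bayesian} and in \Cref{lem:deriv_fw}.

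The genuine difference is how you obtain the minimiser. You use finiteness of $\{0,1\}^D$ and compactness of $\overline{\Param_0}$, which gives attainment by an honest distribution in two lines. The cited general version must handle arbitrary sample spaces, where the infimum need not be attained and $P_0^\star$ is only a limit, possibly a sub-distribution; this is the caveat the paper mentions right after the statement. Your version therefore suffices for the multivariate-Bernoulli uses. It does not cover the general parametric setting in which \Cref{th:bayesian} is claimed to hold.

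There are two loose ends, both easy to close.

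\emph{Uniqueness.} The statement asserts uniqueness among mixtures $Q\in\mathcal{C}$ for which $P_{\param_1}/Q$ is an e-variable. This is exactly the form the paper invokes in the proof of \Cref{lem:pointpoint}. Without the restriction to $\mathcal{C}$ it fails, since $Q=P_{\param_1}$ always gives the e-variable $E\equiv 1$. You only prove uniqueness of the KL minimiser. Your Step~3 closes the gap in one line: for such $Q$ it gives $\KL(P_{\param_1},Q)=\E_{P_{\param_1}}[\log(P_{\param_1}/Q)]\le \KL(P_{\param_1},P_0^\star)$. So $Q$ is itself a minimiser, and Step~1 forces it to coincide with $P_0^\star$ on the support.

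\emph{Non-closed $\Param_0$.} This case arises, e.g., for the null $\Param_1$ of the PPOE test. There your $P_0^\star$ is a mixture over $\overline{\Param_0}$ and need not be of the form $P_{W_0}$ with $W_0\in\Prior_0$. Step~3 then needs $\E_{P_\param}[E]\le 1$ to extend from $\Param_0$ to its closure. This holds by continuity of $\param\mapsto\E_{P_\param}[E]$ on a finite sample space, because $E$ can be infinite only where every null $P_\param$, and hence every limit of them, vanishes. The third equality also needs the infimum over $\Prior_0$ to coincide with the infimum over priors on $\overline{\Param_0}$. This follows from continuity of $Q\mapsto\KL(P_{\param_1},Q)$ as an extended-real map.
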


\Cref{th:gro} holds under the hypothesis that $P_{\param_1}$ has full support and that $\inf_{W_0\in \Prior_0} \KL(P_{\param_1},P_{W_0})$ is finite. Furthermore, $P_0^\star$ can potentially be a subdistribution. Since those hypotheses are always satisfied in our setting, and since $P_0^\star$ is always a distribution for us, we will omit those details in the following.

The minimisation in KL found in \Cref{th:gro} can be interpreted as a projection of the alternative distribution onto $\Hyp_0$, this projection is called the Reverse Information Projection (RIPr) and has an important role in information theory~\citep{cover1999elements}.

\subsection{Turner-Grünwald approach to approximate GRO}

In our setting $\Param_1$, the parameters for $\Hyp_1$, is not a singleton, so that the GRO construction cannot be carried away right away and the $\E_Q[\log E]$ criterion is not adapted. A first way to solve this problem was found in \cite[Theorem 2]{turnerExactAnytimevalidConfidence2022} in which the authors estimate the alternative using bayesian posterior distribution and project this posterior on $\Hyp_0$ using simplified form for the KL in this setting. Their approach depend on the fact that the GRO e-value $P_0^\star$ can be taken as $P_\param$, instead of general $P_{W_0}$ in the specific case of Bernoulli distribution when $\Param_0$ is convex and closed, so that the only thing that needs to be found is the value of $\param$. This type of results can be found in \cite[Theorem 2]{turnerExactAnytimevalidConfidence2022} for 2-dimensional Bernoulli distributions, or in \cite{grunwaldOptimalEValuesExponential2025} for exponential families. We derive our own result for our setting, multivariate Bernoulli distributions of dimension $D$.

\begin{lemma}\label{lem:pointpoint}
	Let $\psi \in (0,1)^D$ and $\Param'\subset [0,1]^D$ be a convex closed set. We choose to denote $\mathcal{P}_{\Param'}^T = \left\{ P_{\phi}^{\otimes T},\, \phi\in \Param'\right\}$. 
	Then, there exists $\phi\in \Param'$, independent of $T$, such that, with $Y^{:T} = Y^1,\dots,Y^T$, $$E^\star \left( Y^{:T}\right)= \frac{\d P_{\psi}^{\otimes T}(Y^{:T})}{\d P_{\phi}^{\otimes T}(Y^{:T})} = \prod_{t=1}^T  \frac{\d P_{\psi}(Y^{t})}{\d P_{\phi}(Y^{t})};$$ in other terms, the RIPr of $P_\psi^{\otimes T}$ onto $\Hyp_0$ is $P_W^{\otimes T}$ with $W$ a Dirac distribution. This also means that the GRO e-value over $T$ steps is the product of the GRO e-values over one step.
\end{lemma}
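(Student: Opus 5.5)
The idea is to take $\phi$ to be the one-step KL projection of $P_\psi$ onto $\{P_\phi,\ \phi\in\Param'\}$. We then show directly that the corresponding product likelihood ratio is an e-variable for every $P_{\phi'}^{\otimes T}$ with $\phi'\in\Param'$, and hence for every mixture $P_W^{\otimes T}$. The GRO characterisation of \Cref{th:gro} then identifies it as $E^\star$.

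Throughout, $P_\phi$ is the product of $D$ Bernoullis, $P_\phi(y)=\prod_d \phi_d^{y_d}(1-\phi_d)^{1-y_d}$. The map
\[
\phi\mapsto \KL(P_\psi,P_\phi)=\sum_d \Big[\psi_d\log\tfrac{\psi_d}{\phi_d}+(1-\psi_d)\log\tfrac{1-\psi_d}{1-\phi_d}\Big]
\]
is convex and lower semicontinuous on the compact convex set $\Param'$, so a minimiser $\phi$ exists. This minimiser does not depend on $T$, because $\KL(P_\psi^{\otimes T},P_\phi^{\otimes T})=T\,\KL(P_\psi,P_\phi)$.

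We need $\phi\in(0,1)^D$. Since $\psi\in(0,1)^D$, any $\phi$ with some $\phi_d\in\{0,1\}$ has infinite KL. So $\phi$ is interior to the cube as long as $\Param'$ contains some point with finite KL. That is exactly the finiteness assumption stated after \Cref{th:gro}; we assume it, and if it fails we treat the degenerate case separately.

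Key step (one-step e-value property). Fix $\phi'\in\Param'$. The first-order optimality condition for the convex minimisation gives that the directional derivative toward $\phi'$ is nonnegative, i.e.
\[
\sum_d(\phi'_d-\phi_d)\,a_d\le 0,\qquad a_d:=\frac{\psi_d}{\phi_d}-\frac{1-\psi_d}{1-\phi_d}.
\]
By the product structure, the one-step expectation factorises:
\[
\E_{Y\sim P_{\phi'}}\!\left[\frac{\d P_\psi(Y)}{\d P_\phi(Y)}\right]=\prod_{d=1}^D\Big(\tfrac{\psi_d\phi'_d}{\phi_d}+\tfrac{(1-\psi_d)(1-\phi'_d)}{1-\phi_d}\Big)=\prod_{d=1}^D\big(1+(\phi'_d-\phi_d)a_d\big).
\]
Each factor is nonnegative. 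Using $1+x\le e^x$, the product is at most $\exp\big(\sum_d(\phi'_d-\phi_d)a_d\big)\le 1$.

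By independence across time, $\E_{P_{\phi'}^{\otimes T}}[E^\star]$ equals the one-step expectation raised to the power $T$, so it is also $\le 1$. By linearity in the mixing measure, $\E_{P_W^{\otimes T}}[E^\star]\le 1$ for every prior $W$ on $\Param'$. Hence $E^\star$ is an e-variable for $\mathcal P^T_{\Param'}$ and all its mixtures.

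Conclusion via \Cref{th:gro}. Let $E$ be any e-variable for the mixtures. Then
\[
\E_{P_\psi^{\otimes T}}[\log E]\le\inf_W\KL(P_\psi^{\otimes T},P_W^{\otimes T})\le \KL(P_\psi^{\otimes T},P_\phi^{\otimes T})=\E_{P_\psi^{\otimes T}}[\log E^\star].
\]
Since $E^\star$ is itself an e-variable, all these inequalities are equalities. So the RIPr is attained at the Dirac $W=\delta_\phi$, and by the essential uniqueness in \Cref{th:gro} the GRO e-value is the displayed product of one-step GRO e-values.

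The main obstacle is the key step: the expectation is multilinear, not linear, in $\phi'$, so a plain convexity argument does not apply. The product factorisation combined with $1+x\le e^x$ is the device I would try first to reduce it to the linear KKT inequality. The only other delicate point is the boundary issue ($\phi_d\in\{0,1\}$), which the finiteness assumption handles.
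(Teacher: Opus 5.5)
Your proof is correct and follows essentially the same route as the paper: a $T$-independent one-step KL minimiser $\phi$, the first-order condition $\sum_d(\phi'_d-\phi_d)a_d\le 0$, factorisation of the expectation of $E^\star$ under $P_{\phi'}^{\otimes T}$ over coordinates and time steps, and a convexity inequality that reduces the multilinear product to that linear condition. The only differences are minor: the paper uses concavity of $\log$, i.e.\ $\prod_d c_d\le(\frac1D\sum_d c_d)^D$ for the factors $c_d=1+(\phi'_d-\phi_d)a_d$, where you use $1+x\le e^x$ (either works), and your weak-duality step, with its explicit handling of mixtures and of the boundary case, justifies the GRO identification more cleanly than the paper's brief appeal to uniqueness of likelihood-ratio e-variables.
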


\Cref{lem:pointpoint} comes from the characterisation of the GRO e-value as $\frac{\d P_\psi^{\otimes T}}{\d P}$ with $P$ the RIPr of $\d P_\psi^{\otimes T}$ onto $\Hyp_0$, that is, the distribution minimising $\KL(P_\psi^{\otimes T},P)$ and from \Cref{th:gro}. The proof for \Cref{lem:pointpoint} can be found in \Cref{sec:proof_pointpoint}.

Using this Lemma~\ref{lem:pointpoint} we can define what we call later the TG e-variable which consider at time $t$, the (fictive) alternative $\Hyp_1^t = \{P(\cdot \mid X^{:t})\}$ comprised only of the posterior distribution and use it to compute the GRO e-value for each individual step using Lemma~\ref{lem:pointpoint}. Remark that this approach depend on the convexity of $\Param_0$ and cannot be used to test for Schulze winner. On the other hand, using a Beta prior on $[0,1]^D$, \cite{turnerExactAnytimevalidConfidence2022} also simplify the computation of the KL to end up with a computation of a minimum of sum of KL on each dimension. See \Cref{sec:experimentation} for a practical comparison of TG e-values with our work.

	\subsection{The POE e-value}

Inspired from Bayesian decision theory \cite[Section 2.3]{robertBayesianChoiceDecisionTheoretic2007} and by the definition of the GRO e-value, we define the utility function of an e-value $E$ as 
$$U(E, \param) = \log(E) \ind\{\param \in \Param_1\}.$$
Remark that we place ourselves in the context of utility function instead of losses to fit the setting of \cite{grunwaldSafeTesting2023}. We aim to find the e-value $E_t$ which, for any stopping time $\tau$, optimises its stopped integrated utility $G_t$ with respect to a prior $\prior$. We define
\begin{align}\label{eq:gn}
	G_t(E,\prior,\tau) &:=  \E_{\param \sim \prior}\left[ \E_{X^{:t} \sim P_{\param}^{\otimes t}}\left[ \log(E(X^{:t\wedge\tau} )) \ind\{\param \in \Param_1\} \right]\right].
\end{align} 

Remark that for $\prior$ a prior with support on $\Param_1$ this simplifies as the indicator function is not needed anymore. In what follows we suppose that the support of $\prior$ is in $\Param_1$.

We will search for an optimal e-variable that is also a test supermartingale, i.e. e-variables such that for any $t$ and $\param \in \Param_0$, we have $\E_{X^{:t} \sim P_{\param}^{\otimes t}}[E(X^{:t})\mid X^{:t-1}]\le E(X^{:t-1})$. While not all anytime e-values can be built using them~\citep{koolenLogoptimalAnytimevalidEvalues2022}, test supermartingales are an easy and well-behaved way to build them.

Among those e-values, we search for an e-value which maximises for each $t$ the quantity $G_t(E,\prior,\tau)$, which is the growth rate for some prior $\prior\in \Prior_1$ and stopping time $\tau$. We call it the Posterior Optimal E-value (POE for short).

\begin{definition}[POE]\label{def:POE}
	Let $\prior\in \Prior_1$ be a prior over $\Param_1$. 
	$(E_t)$ is a Posterior Optimal E-value (POE) for $\prior$ if it is a test supermartingale and if, for any $(E_n')$ test supermartingale and for any stopping time $\tau$, \[ \forall t,\, G_t(E_t,\prior,\tau) \geq G_t(E_t',\prior,\tau) .\]
\end{definition}

While the condition that POE must optimise $G_t(E,\prior,\tau)$ for any stopping time $\tau$ might seem very constraining, we will see that there indeed exists an e-value which satisfies it.

Note that this is a generalisation of the GRO e-value of \Cref{th:gro}, as the definitions match when $\Param_1$ is a singleton and for $\tau=1$. For a constant stopping time, compared to the GROW e-value, we want to be good not for the worst case, but over multiple possible parameters - or, more accurately, over a distribution of parameters.

\begin{theorem}[POE is the product of one-step optimal posterior e-values]\label{th:bayesian}
	Let $W_{1,t}$ be the posterior distribution over $\Param_1$ at time $t$ defined by
        $$W_{1,t}(\param) := \frac{\prod_{i=1}^{t-1} P_{\param}(x_i) \prior(\param)}{\int \prod_{i=1}^{t-1} P_{\param}(x_i) \prior(\param) \d \param},$$ 
	and 
	$$W_{0,t} := \argmin_{W\in \Prior_0}\KL\left(\int P_{\param}(X^t) \d W_{1,t}(\param),\int P_{\param}(X^t) \d W(\param)\right).$$ 
	Assume that, for any $\lawa,\lawb\in \Param_0\cup\Param_1$, $\KL(P_\lawa,P_\lawb)<+\infty$.
	
	Then, we have that the POE e-variable exists and is equal to
	\begin{equation}\label{eq:POE} E_t(x^{:t}) = \prod_{i=1}^t \frac{\int P_{\param}(x_i) \d W_{1,i}(\param)}{\int P_{\param}(x_i) \d W_{0,i}(\param)}.\end{equation} 
\end{theorem}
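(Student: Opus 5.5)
Since every test supermartingale is a product of one-step factors, I will reduce the problem to a sum of one-step growth-rate problems and solve each with \Cref{th:gro}.

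\textbf{Step 1: write any candidate as a product of one-step factors.} Let $(E'_t)$ be any test supermartingale. Set $F_i := E'_i/E'_{i-1}$, with $E'_0=1$ and the convention $0/0=1$. Each $F_i$ is a nonnegative function of $x^{:i}$. The supermartingale property says exactly that, for every fixed past $x^{:i-1}$ and every $\param\in\Param_0$,
\[
\E_{X^i\sim P_\param}[F_i(x^{:i-1},X^i)]\le 1 .
\]
So $F_i(x^{:i-1},\cdot)$ is a one-step e-variable for $\Hyp_0$, and it is then automatically an e-variable for every mixture $P_W$ with $W\in\Prior_0$. Conversely, any product of such conditional one-step e-values is a test supermartingale. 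It is also clear that \eqref{eq:POE} is of this form: its $i$-th factor is the GRO e-value of \Cref{th:gro} for the simple alternative $P_{W_{1,i}}=\int P_\param \d W_{1,i}(\param)$. That theorem applies because $P_{W_{1,i}}$ has full support and the KL terms are finite by assumption. Hence \eqref{eq:POE} is a test supermartingale.

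\textbf{Step 2: decompose $G_t$ over time and integrate out $\param$.} Write $\log E'_{t\wedge\tau}=\sum_{i=1}^{t}\ind\{\tau\ge i\}\log F_i$. The event $\{\tau\ge i\}$ is $\sigma(X^{:i-1})$-measurable. Then, by Fubini,
\[
G_t(E',\prior,\tau)=\sum_{i=1}^t \E\Bigl[\ind\{\tau\ge i\}\,\E\bigl[\log F_i(X^{:i-1},X^i)\,\big|\,X^{:i-1}\bigr]\Bigr],
\]
where the outer expectation is under the Bayesian joint law of $(\param,X^{:t})$. Conditionally on $X^{:i-1}=x^{:i-1}$, the law of $\param$ is the posterior $W_{1,i}$, so $X^i$ has predictive law $P_{W_{1,i}}$. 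The inner term is therefore $\E_{Y\sim P_{W_{1,i}}}[\log F_i(x^{:i-1},Y)]$.

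\textbf{Step 3: optimise pointwise.} By Step 1 and \Cref{th:gro}, for each fixed past,
\[
\E_{Y\sim P_{W_{1,i}}}[\log F_i(x^{:i-1},Y)]\le \KL(P_{W_{1,i}},P_{W_{0,i}}),
\]
with equality for the $i$-th factor of \eqref{eq:POE}. The weights $\ind\{\tau\ge i\}$ are nonnegative. So summing these pointwise inequalities gives $G_t(E,\prior,\tau)\ge G_t(E',\prior,\tau)$ for every $t$ and every $\tau$ simultaneously, which is exactly \Cref{def:POE}.

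\textbf{Main obstacle.} The delicate part is integrability in Step 2. The quantity $\log F_i$ may be $-\infty$ or fail to be integrable, so Fubini and the conditioning need care. I would handle this in two ways. First, note that $\E[\log F_i]$ is bounded above by the KL term, which is finite by the assumption on $\KL$. This upper bound lets the conditional expectations be defined in $[-\infty,\infty)$, and any $E'$ with $G_t=-\infty$ is trivially dominated. Second, I would need measurability of $x^{:i-1}\mapsto W_{0,i}$, or at least of the resulting factor. Existence of the minimiser comes from \Cref{th:gro}. For the measurable selection I would argue that the optimal factor is essentially unique, so it is a well-defined function of $x^{:i-1}$ through the density ratio.
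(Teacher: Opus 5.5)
Your proposal is correct and is essentially the paper's proof. The paper likewise treats each factor as a conditional one-step e-value, though it checks validity directly from the first-order optimality condition of $W_{0,t}$ rather than citing \Cref{th:gro}; it then bounds the conditional increments of $M_t=\log E'_t-\log E_t$ under the Bayesian predictive law via \Cref{th:gro} and concludes with the stopped supermartingale $M_{t\wedge\tau}$, which is exactly your telescoping sum with predictable weights $\ind\{\tau\ge i\}$. The integrability and measurability issues you flag are harmless here, since the sample space $\{0,1\}^D$ and the set of possible pasts are finite.
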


The proof can be found in \Cref{sec:proof_bayesian}.
In other words $W_{0,t}$ minimises the one-sample posterior KL conditionally on the past. This approach is motivated by  \cite[Theorem 1]{grunwaldSafeTesting2023} applied on the posterior distribution at step $t$, which states that the chosen $W_{0,t}$ is the one-step optimal e-value at time $t$ conditionally on the past. 

Note that it is also similar to the e-value given by \cite[Equation (7)]{turnerExactAnytimevalidConfidence2022} for multivariate Bernoulli with $D=2$ dimensions, with a few differences. They define \Cref{eq:POE} merely as \emph{an} e-value; they focus on the multivariate Bernoulli $D=2$ case; and fix a particular prior which contrary to our case has support in the whole $\Param$ space. Meanwhile, our definition of POE is based off optimising the rate $G_t$; our definition of POE and \Cref{th:bayesian} is valid for any possible test in which the $\Hyp_i$ are defined as distributions parametrised by some sets $\Param_i$, as long as the $\KL$ over $\Param_0\cup\Param_1$ is finite; and our definition includes any kind of prior. 


	
\section{Statistical testing with POE}\label{sect:guaranteese}

In this section, we describe the two testing procedures that we consider in this article, and give their guarantees in power and sample complexity.

\paragraph{Capped sample size test}
The first procedure has a finite, pre-computed sample size that has the advantage of having a fixed maximum sample size. However, it has the disadvantage of being non-optimal on average in terms of stopping time, and it depends on some parameters of the model that may not be known in practice. 

More formally, to test $\Hyp_0$ against $\Hyp_1$ using samples $X^1,X^2,\dots$, we construct at each timestep $t$ an e-variable $E(X^{:t})$. We also decide on a maximum sample size $T$.

The Capped sample size test then consists in collecting data and stopping either for the first $t$ such that $E(X^{:t})\ge 1/\alpha$, in which case we reject $\Hyp_0$; or stopping when we collected $T$ data and never rejected before, at which point we accept $\Hyp_0$. We call this procedure stopped e-sequential test. In the sequel, we generally call this test according to which e-value it uses (e.g. POE test that uses POE e-value).

\paragraph{Fixed confidence test}
The second procedure is optimal in term of expected stopping time and does not depend on knowing anything about the model. Its drawback, however, is that the stopping time is not controlled almost surely and it may happen that the test takes a long time to stop.

To test $\Hyp_0$ against $\Hyp_1$ using samples $X^1,X^2,\dots$, we construct two sequences of e-variables $E(X^{:t})$ and $E'(X^{:t})$ such that 
$$\forall 1\le t, \forall P \in \Hyp_0,\quad  \E_P[E(X^{:t})]\le 1 ,$$
$$\forall 1\le t, \forall Q\in \Hyp_1,\quad  \E_Q[E'(X^{:t})]\le 1,$$
meaning that $E$ is a sequence of e-values for $\Hyp_0$ (against $\Hyp_1$), and $E'$ is a sequence of e-values for $\Hyp_1$ (against $\Hyp_0$).

The test is then to collect data and stop the first time one of two events occurs: either $E(X^{:t})\ge 1/\alpha$, in which case we reject $\Hyp_0$; or $E'(X^{:t}) \ge 1/\beta$, and we reject $\Hyp_1$. The stopping time is thus 
\[\tau = \inf\left\{ t\geq 1, \, E(X^{:t}) \geq \frac 1 \alpha \text{ or } E'(X^{:t})\geq \frac 1 \beta\right\} .\]
 Remark that in this testing procedure, we do not have an almost sure bound on the stopping time. When $E$ and $E'$ are constructed with POE, the resulting test is called the PPOE (parallel posterior optimal e-value) test.

\subsection{Some concentration results for POE and the likelihood ratio}
Let us start with some concentration results first.

\begin{lemma}[Concentration for likelihood ratio]\label{lem:GRO}
	Let $Q$ and $P$ be two multivariate Bernoullis of dimension $D$, of parameters respectively $\lawa$ and $\lawb$. Assume $X\sim Q$. Define $\tau = \min\{T \text{ such that }\prod_{t=1}^T \frac{\d Q}{\d P}(X^t) >\frac{1}{\alpha}\}$.
	
	We have for any $\nu>0$ and $\varepsilon\in (0,1)$ that, with probability greater than $1-\nu$, \begin{equation*}  \tau \le \frac{\log\frac{1}{\alpha} +m(\lawa,\lawb)}{(1-\varepsilon)\sum_{d\in [D]}  \kl(\lawa_d,\lawb_d)}+\frac{\log (1/\nu) \sum_{d\in [D]} \sigma_d^2}{8\varepsilon(1-\varepsilon)\left(\sum_{d\in [D]} \kl(\lawa_d,\lawb_d)\right)^2} \end{equation*} where $\sigma_d = \left| \log\frac{\lawa_d(1-\lawb_d)}{\lawb_d(1-\lawa_d)}\right|$ and $m(\lawa,\lawb)=\sum_{d\in [D]} \max\left\{  \log\frac{1-\lawa_d}{1-\lawb_d}, \log\frac{\lawa_d}{\lawb_d}\right\}$.
	
\end{lemma}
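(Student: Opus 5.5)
The plan is to write the log-likelihood ratio as a sum of i.i.d. bounded terms and control it with Hoeffding's inequality. Let
\[
L_t=\log\frac{\d Q}{\d P}(X^t)=\sum_{d\in[D]}\Bigl(X^t_d\log\frac{\lawa_d}{\lawb_d}+(1-X^t_d)\log\frac{1-\lawa_d}{1-\lawb_d}\Bigr).
\]
The $L_t$ are i.i.d. with mean $\sum_d\kl(\lawa_d,\lawb_d)=:K$. For each $d$, the $d$-th summand takes one of two values, which differ by exactly $\sigma_d$. Each summand is bounded above by $\max\{\log\frac{1-\lawa_d}{1-\lawb_d},\log\frac{\lawa_d}{\lawb_d}\}$, so $L_t\le m(\lawa,\lawb)$ almost surely.

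\textbf{Step 1: from $\tau$ to a single time.} Fix an integer $T$. If $\tau>T$, then $\sum_{t\le T}L_t\le\log(1/\alpha)$, so
\[
\Pr(\tau>T)\le\Pr\Bigl(\sum_{t\le T}(L_t-K)\le \log\tfrac1\alpha-TK\Bigr).
\]
Only the fixed-time event is needed, so no maximal inequality is required. The term $m(\lawa,\lawb)$ in the bound plausibly absorbs rounding to an integer $T$, or an overshoot argument. In any case, adding it only enlarges the threshold, so it can be carried harmlessly.

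\textbf{Step 2: Hoeffding.} Apply Hoeffding to the $TD$ independent bounded variables, the $d$-th having range $\sigma_d$. For $TK>\log(1/\alpha)$ this gives
\[
\Pr(\tau>T)\le\exp\Bigl(-\frac{2(TK-\log(1/\alpha))^2}{T\sum_d\sigma_d^2}\Bigr).
\]
It therefore suffices to choose $T$ with $2(TK-a)^2\ge T S\log(1/\nu)$, where $a=\log(1/\alpha)+m(\lawa,\lawb)$ and $S=\sum_d\sigma_d^2$.

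\textbf{Step 3: solving the quadratic (main obstacle).} The bookkeeping is the delicate part: the condition must be decoupled into the stated additive form with $\varepsilon$. Write $TK-a=\varepsilon TK+\bigl((1-\varepsilon)TK-a\bigr)$. Take
\[
T\ge \frac{a}{(1-\varepsilon)K}+\frac{S\log(1/\nu)}{8\varepsilon(1-\varepsilon)K^2}.
\]
Then $(1-\varepsilon)TK-a\ge \frac{S\log(1/\nu)}{8\varepsilon K}=:b\ge0$, and so
\[
(TK-a)^2\ge(\varepsilon TK+b)^2\ge 4\varepsilon TKb=\frac{TS\log(1/\nu)}{2},
\]
by AM--GM. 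This is exactly the needed condition, which explains the constants $8\varepsilon(1-\varepsilon)$. The remaining care is the integer rounding of $T$, which I expect to be handled by the slack $m(\lawa,\lawb)$ or by taking $T$ to be the ceiling and checking monotonicity of the condition in $T$.
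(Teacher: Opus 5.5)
Your proof is correct, and it takes a different route from the paper's.

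\textbf{The paper's route.} The paper works at the random time $\tau$ itself. It applies the fundamental identity of sequential analysis to the exponential martingale $\exp(\lambda\sum_{t\le n}Y_t)\,\E[e^{\lambda Y}]^{-n}$, bounds the moment generating function with Hoeffding's lemma, and applies Markov's inequality. Then, for $\lambda<0$, it uses the overshoot bound $\sum_{t\le\tau}Y_t\le\log(1/\alpha)+m(\lawa,\lawb)$: the sum is at most $\log(1/\alpha)$ at time $\tau-1$, and each increment is at most $m$. The choice $\lambda=-8\varepsilon K/S$ produces the stated form, which your AM--GM step reproduces exactly.

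\textbf{What each route buys.} In the paper, $m$ is a genuine overshoot cost. The conclusion is an inequality on $\tau$ holding on a single event, so there is no rounding issue, at the price of optional stopping. Your fixed-time inclusion $\{\tau>T\}\subseteq\{\sum_{t\le T}L_t\le\log(1/\alpha)\}$ avoids optional stopping entirely. It also gives the explicit tail bound $\Pr(\tau>T)\le\exp(-2(TK-\log(1/\alpha))^2/(TS))$ at every integer $T$, which is exactly what is needed later for $\E_Q[\tau]$. In your version, $m$ only serves to absorb integer rounding.

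\textbf{Closing the rounding step.} Only one of your two suggestions works. Write $T^\star$ for the right-hand side of the lemma. Since $\tau$ is integer-valued, you need $\Pr(\tau>\lfloor T^\star\rfloor)\le\nu$. Taking the ceiling only gives $\tau\le\lceil T^\star\rceil$, and monotonicity in $T$ cannot take you below $T^\star$.

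The slack argument does work:
\begin{itemize}
\item Take $T=\lfloor T^\star\rfloor$. Your Step 3 applies to $T+1\ge T^\star$, giving $(T+1)K-a\ge 0$ and $2((T+1)K-a)^2\ge (T+1)S\log(1/\nu)$.
\item Since $L_t\le m(\lawa,\lawb)$ almost surely and $\E[L_t]=K$, you have $K\le m(\lawa,\lawb)$.
\item Hence $TK-\log(1/\alpha)\ge (T+1)K-a\ge 0$, and so $2(TK-\log(1/\alpha))^2\ge TS\log(1/\nu)$. This is your Hoeffding condition at $T$.
\item Finally $T\ge 1$, because $T^\star\ge m/K\ge 1$.
\end{itemize}
Incidentally, this shows your route only needs slack $K$ rather than $m$. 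It therefore proves the bound with $m(\lawa,\lawb)$ replaced by the smaller $\sum_d\kl(\lawa_d,\lawb_d)$.
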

We show this technical result in \Cref{appsect:proofsGro}.

Note that, as will be proved in \Cref{appsect:mandsigma}, 
\begin{equation}\label{eq:mandsigma} 
m(\lawa,\lawb)\leq \sum_{d\in[D]} \sigma_d  
\end{equation} 
and 
\begin{equation}
\label{eq:sigmaandkl} \kl(\lawa_d,\lawb_d)\leq \sigma_d,
\end{equation}
In the more complicated case of the concentration of POE, we also have to handle the speed of convergence of the posterior distribution to the sampling distribution and this gives us the following theorem.

\begin{theorem}[Concentration for POE, general case]\label{th:POE}
	Assume that $\Param_0$ and $\Param_1$ are such that for any $\param\in \Param_0\cup \Param_1$, we have for all $d\in [D]$ that $\param_d \in (\varepsilon,1-\varepsilon)$. Assume $X\sim Q$ with $Q=P_\lawa$ a multivariate Bernoulli of dimension $D$ and let $\prior$ be a prior over $\Param_1$. Let $E_t$ be the e-variable defined in \Cref{eq:POE}.
	We have for any $\nu>0$ that, with probability greater than $1-2\nu$, 
	\begin{align*} \log \left(E_T(X^{:T})\right) &\ge T \KL\left(Q, P^\star \right)-1-\log\left(\frac{1}{\prior\left(\mathcal{B}_{\infty}\left( \left( 1-\frac 1{TD}\right)\lawa, \frac{1}{TD}\right)\right)}\right)  \\
		&\hspace{4em} -\log(1/\varepsilon)\sqrt{2 TD \log(1/\nu)}
	\end{align*}
	where $P^\star$ is the RIP of $Q$ on $\Hyp_0$. 
\end{theorem}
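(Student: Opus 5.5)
The plan is to split $\log E_T(X^{:T})$ into three pieces: a Bayesian marginal-likelihood term, a comparison between the posterior reverse projections $W_{0,t}$ and the fixed projection $P^\star$, and an i.i.d.\ log-likelihood-ratio sum. Each piece is then bounded separately.

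\textbf{Step 1: the numerator is the Bayes marginal.} The posterior predictives telescope:
\[
\prod_{t=1}^T \int P_\param(x_t)\,\d W_{1,t}(\param) \;=\; \int \prod_{t=1}^T P_\param(x_t)\,\d\prior(\param) \;=:\; m(x^{:T}).
\]
Writing $P_{0,t}:=\int P_\param\,\d W_{0,t}(\param)$, we get
\[
\log E_T = \log\frac{m(X^{:T})}{Q^{\otimes T}(X^{:T})} + \sum_{t}\log\frac{Q(X^t)}{P^\star(X^t)} + \sum_t \log\frac{P^\star(X^t)}{P_{0,t}(X^t)} .
\]

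\textbf{Step 2: the marginal term.} For $\param$ in the ball $\mathcal{B}_\infty\bigl((1-\tfrac1{TD})\lawa,\tfrac1{TD}\bigr)$, I will show coordinatewise that $\param_d/\lawa_d$ and $(1-\param_d)/(1-\lawa_d)$ are both at least $1-\tfrac{c}{TD}$, using that the shrinkage toward $0$ keeps $1-\param_d$ from degenerating. Hence, deterministically,
\[
\prod_{t\le T}\frac{P_\param(X^t)}{Q(X^t)} \ge \Bigl(1-\tfrac{1}{TD}\Bigr)^{TD}\gtrsim e^{-1}.
\]
Restricting the integral defining $m$ to the ball then gives the bound $-1-\log\bigl(1/\prior(\mathcal{B}_\infty)\bigr)$. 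The exact constants will need care at this step.

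\textbf{Step 3: the i.i.d.\ term.} The summands $\log\frac{Q(X^t)}{P^\star(X^t)}$ are i.i.d.\ with mean $\KL(Q,P^\star)$. Since $P^\star$ is a mixture of product Bernoullis with parameters in $(\varepsilon,1-\varepsilon)$, each summand lies in a range of width about $D\log(1/\varepsilon)$. Splitting the sum coordinatewise, or using McDiarmid over the $TD$ Bernoulli coordinates, gives with probability $1-\nu$ a deviation of at most $\log(1/\varepsilon)\sqrt{2TD\log(1/\nu)}$.

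\textbf{Step 4 (main obstacle): the $W_{0,t}$ versus $P^\star$ term.} Here I plan to use the numeraire property of the one-step GRO e-value $Q_t/P_{0,t}$, where $Q_t$ is the posterior predictive and, by \Cref{th:gro}, $P_{0,t}$ is the RIPr of $Q_t$. Test it against the e-value $Q/P^\star$, which is an e-value for $\Hyp_0$ again by \Cref{th:gro}. The numeraire property gives
\[
\E_{Q_t}\Bigl[\frac{Q\,P_{0,t}}{P^\star\,Q_t}\Bigr]\le 1, \qquad\text{that is,}\qquad \E_{X\sim Q}\Bigl[\frac{P_{0,t}(X)}{P^\star(X)}\Bigr]\le 1 .
\]
Because $W_{0,t}$ is measurable with respect to the past, the products $\prod_t P_{0,t}(X^t)/P^\star(X^t)$ form a nonnegative supermartingale under $Q$. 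Ville's inequality then yields $\sum_t\log\frac{P^\star}{P_{0,t}}\ge-\log(1/\nu)$ with probability $1-\nu$.

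This step accounts for the second $\nu$ in the $1-2\nu$ guarantee. The extra $-\log(1/\nu)$ should be absorbed into the Hoeffding term, possibly by rebalancing constants or by running the concentration on the combined sum $\sum_t \log\frac{Q}{P_{0,t}}$ as one martingale with bounded increments, using Azuma in place of Hoeffding. I expect verifying the numeraire inequality rigorously when the $P_{0,t}$ are mixtures, and matching the stated constants, to be the delicate part.
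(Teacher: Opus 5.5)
Your decomposition genuinely differs from the paper's. The paper applies McDiarmid separately to $\log\prod_t P_{W_{1,t}}(X^t)$ and to $\log\prod_t P_{W_{0,t}}(X^t)$, with bounded differences $\log(1/\varepsilon)$ each. That is where $\sqrt{2TD\log(1/\nu)}$ and the probability $1-2\nu$ come from. It then bounds expectations: the numerator via its auxiliary lemma (Donsker--Varadhan with $\prior$ restricted to a small box near $\lawa$), and the denominator via $\KL(Q,P_{W_{0,t}})\ge\KL(Q,P^\star)$.

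You instead bound the marginal term pathwise and handle the data-dependent projections with martingale arguments. The paper's auxiliary lemma is in fact also pathwise, so concentrating the numerator is redundant there. Your route is also more rigorous. The paper's bounded-difference argument for $\sum_t\log P_{W_{0,t}}(X^t)$ only tracks the factor at time $t'$. But flipping $X^{t'}_{d'}$ also changes every later $W_{0,t}$ through the posterior, so McDiarmid is not justified there as written.

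The inequality you worry about in Step 4 is immediate. $Q/P^\star$ is an e-value under every $P_\param$ with $\param\in\Param_0$, hence under the mixture $P_{0,t}$. So $\E_Q[P_{0,t}/P^\star]=\E_{P_{0,t}}[Q/P^\star]\le 1$.

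\textbf{The extra $\log(1/\nu)$.} As you note, Steps 3 and 4 together cost an extra $-\log(1/\nu)$, which cannot be absorbed into the stated term. Your fallback works, provided the martingale is indexed by the $TD$ coordinates rather than the $T$ rounds.
\begin{itemize}
\item Set $\xi_t=\log\frac{Q(X^t)}{P_{0,t}(X^t)}-\KL(Q,P_{0,t})$. Reveal $X^t$ one coordinate at a time, with $W_{0,t}$ fixed by the past.
\item Each Doob increment then has conditional range at most $2\log\frac{1-\varepsilon}{\varepsilon}$. Flipping one coordinate moves $\log Q$, and $\log P_{0,t}$ (a mixture of products), by at most $\log\frac{1-\varepsilon}{\varepsilon}$ each. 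This uses $\lawa_d\in(\varepsilon,1-\varepsilon)$, which the statement implicitly assumes.
\item Azuma--Hoeffding gives $\sum_t\xi_t\ge-\log(1/\varepsilon)\sqrt{2TD\log(1/\nu)}$ with probability $1-\nu$.
\item $\KL(Q,P_{0,t})\ge\KL(Q,P^\star)$ finishes the argument.
\end{itemize}
This replaces Steps 3 and 4 entirely and even improves $1-2\nu$ to $1-\nu$. Per-round Azuma, with ranges of order $D\log(1/\varepsilon)$, would lose a factor $\sqrt D$.

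\textbf{The genuine gap: Step 2 is false for the stated ball.} The upper side is fine: on the ball, $1-\param_d\ge(1-\frac1{TD})(1-\lawa_d)$. But the ball extends $\frac1{TD}$ below $(1-\frac1{TD})\lawa_d$, so $\param_d/\lawa_d$ can be as small as $1-\frac{1+1/\lawa_d}{TD}$. Hence no universal $c$ exists. For instance, with $X^t_d\equiv 1$ and $\param$ at the lower corner, the likelihood ratio is only about $\exp\bigl(-1-\frac1D\sum_d\lawa_d^{-1}\bigr)$, not $e^{-1}$.

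What works is the one-sided box $\prod_d[(1-\alpha)\lawa_d,(1-\alpha)\lawa_d+\alpha]$ with $\alpha=\frac1{TD+1}$. On it, $\param_d\ge(1-\alpha)\lawa_d$ and $1-\param_d\ge(1-\alpha)(1-\lawa_d)$, so the ratio is at least $(1-\alpha)^{TD}\ge e^{-1}$ pathwise. This is exactly the set used in the paper's auxiliary lemma.

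However, that box is contained in the ball, so its $\prior$-mass is smaller. The paper's last step, which replaces the box by the ball, therefore goes in the wrong direction. Neither argument delivers the ball version with constant $1$. Your repaired proof gives the bound with $\prior(\text{box})$ in place of $\prior\bigl(\mathcal{B}_\infty((1-\frac1{TD})\lawa,\frac1{TD})\bigr)$, which is also all that the paper's auxiliary lemma supports.
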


The proof of this lemma is to be found in \Cref{appsect:proofsPOE}.

This result depends on the prior chosen; therefore, we look at what happens in the case of a particular prior, the uniform one. 

\begin{cor}[Concentration for POE, uniform prior]\label{cor:POEuni}
	If, in addition to the assumptions of \Cref{th:POE}, $\prior$ is uniform on $\Param_1$, we get with probability greater than $1-2\nu$
	{\small
	$$\log \left(E_T(X^{:T})\right) \ge T \KL\left(Q, P^*\right)-1-D\log (TD)  -\log(1/\varepsilon)\sqrt{2 TD \log(1/\nu)}.
	$$
	}
\end{cor}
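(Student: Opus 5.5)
The corollary follows from \Cref{th:POE} once we show that, for the uniform prior $\prior$ on $\Param_1$,
\[
\log\left(\frac{1}{\prior\left(\mathcal{B}_{\infty}\left(\left(1-\tfrac{1}{TD}\right)\lawa,\tfrac{1}{TD}\right)\right)}\right) \le D\log(TD).
\]
Substituting this into the bound of \Cref{th:POE} gives the statement directly; all other terms are unchanged. The work is therefore a lower bound of $(TD)^{-D}$ on the prior mass of the ball $B := \mathcal{B}_{\infty}\left((1-\frac1{TD})\lawa,\frac1{TD}\right)$.

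Write $\prior(B) = \mathrm{Leb}(B\cap\Param_1)/\mathrm{Leb}(\Param_1)$. The denominator is easy: $\Param_1\subset[0,1]^D$, so $\mathrm{Leb}(\Param_1)\le 1$ and $\prior(B)\ge \mathrm{Leb}(B\cap\Param_1)$. The numerator is the real work: we must show that $B\cap\Param_1$ contains a cube of side $\frac{1}{TD}$, i.e.\ has volume at least $(TD)^{-D}$. The natural candidate is the box $\prod_d\left[(1-\frac1{TD})\lawa_d,\ (1-\frac1{TD})\lawa_d+\frac1{TD}\right]$, a half of $B$ of side exactly $\frac1{TD}$. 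Its points are the convex combinations $(1-\frac1{TD})\lawa + \frac1{TD}u$ with $u\in[0,1]^D$. These points lie in $[0,1]^D$, and the box contains $\lawa$ itself (take $u=\lawa$).

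The main obstacle is guaranteeing that this box lies inside $\Param_1$. That containment does not hold for arbitrary $\Param_1$, so the corollary implicitly needs a geometric property of $\Param_1$ near $\lawa\in\Param_1$. I would first handle the setting where it is immediate: the alternative is, up to null sets, a union of convex cones of the form "sign/ordering constraints" (as for Condorcet, Borda and the Schulze orderings). Then $\lawa$ belongs to such a set $C$, and the displayed contraction toward a suitable point keeps points in $C$.

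If this cannot be made exact, there is a fallback. Replace the cube by the homothetic image $(1-\frac1{TD})\lawa+\frac1{TD}\Param_1$. This set lies in $\Param_1$ whenever $\Param_1$ is star-shaped with respect to $\lawa$, or convex, and it lies in $B$. Its volume is $(TD)^{-D}\mathrm{Leb}(\Param_1)$, so $\prior(B)\ge (TD)^{-D}$ exactly, with the denominator cancelling. This homothety argument is the one I would commit to, since it needs only convexity or star-shapedness of the relevant component of $\Param_1$. In the non-convex Schulze case, I would apply it to the convex component containing $\lawa$, accepting a constant factor equal to the ratio of that component's volume to $\mathrm{Leb}(\Param_1)$. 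Taking logarithms then yields $D\log(TD)$ and concludes.
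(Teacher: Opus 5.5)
Your reduction is the same as the paper's. The corollary is Theorem~\ref{th:POE} plus the bound $\log(1/\prior(B))\le D\log(TD)$ for $B=\mathcal{B}_{\infty}\bigl((1-\frac{1}{TD})\lawa,\frac{1}{TD}\bigr)$. You are also right that this bound needs a geometric hypothesis the statement leaves implicit.

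The paper supplies that hypothesis in the ``in particular'' clause of Lemma~\ref{lem:auxPOE}. It assumes $B\subseteq\Param_1$, so that $\prior(B)=\mathrm{Leb}(B)/\mathrm{Leb}(\Param_1)\ge\mathrm{Leb}(B)=(2/(TD))^D$.

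You take a different route. You push $\Param_1$ through the contraction $h(\param)=(1-\frac{1}{TD})\lawa+\frac{1}{TD}\param$. This is the map the paper uses inside the proof of Lemma~\ref{lem:auxPOE}, but you apply it to $\Param_1$ rather than to the whole cube. You note that $h(\Param_1)\subseteq B$ holds automatically because $\Param_1\subseteq[0,1]^D$. This gives $\prior(B)\ge\mathrm{Leb}(h(\Param_1))/\mathrm{Leb}(\Param_1)=(TD)^{-D}$, with the normalisation cancelling exactly.

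All your argument actually needs is $h(\Param_1)\subseteq\Param_1$. The two routes compare as follows:
\begin{itemize}
\item Star-shapedness of $\Param_1$ about $\lawa$ gives this for every $T$. That includes the case where $\lawa$ lies on or very near $\partial\Param_1$, where $B\subseteq\Param_1$ fails. For $\lawa\in\partial\Param_1$ it fails for every $T$, since $\|\lawa\|_\infty<1$ makes $\lawa$ an interior point of $B$.
\item The paper's condition also implies yours, since $h(\Param_1)\subseteq B\subseteq\Param_1$. So your argument covers the paper's case.
\item The alternatives the paper actually uses (complements of the Condorcet, Borda or Schulze winner sets) are unions of half-spaces or ordering cones, and are typically not star-shaped about $\lawa$. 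There, like the paper, you get the exact bound once $T$ is large enough that $\mathcal{B}_{\infty}(\lawa,\frac{1}{TD})\subseteq\Param_1$. That containment already forces $h(\Param_1)\subseteq\Param_1$, because $\|h(\param)-\lawa\|_\infty=\frac{1}{TD}\|\param-\lawa\|_\infty\le\frac{1}{TD}$.
\end{itemize}

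Your Schulze fallback is therefore unnecessary. It loses an additive $\log(\mathrm{Leb}(\Param_1)/\mathrm{Leb}(C))$, so it does not give the stated inequality anyway.
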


With those results, we are now ready to derive some guarantees for tests using GRO and POE.

\subsection{Capped sample size test}
For the Capped sample size test, we have the following guarantees.

\begin{theorem}[Fixed sample size test for GRO]\label{th:GRO}
	For $\Hyp_1 = \{Q\}$ with $Q=P_\lawa$, and $\Param_0$ convex,
	fix \[ T = 2\frac{\log\frac{1}{\alpha} +\sum_{d\in[D]} \log\frac{1-\lawa_d}{1-\lawb_d}}{\sum_{d\in [D]}  \kl(\lawa_d,\lawb_d)}+\frac{\log (1/\beta) \sum_{d\in [D]} \sigma_d^2}{2\left(\sum_{d\in [D]} \kl(\lawa_d,\lawb_d)\right)^2}\] as the maximum sample size, with $P=P_\lawb$ the RIPr of $Q$ onto $\Hyp_0$. Then the described test for GRO has power greater than $1-\beta$ and false positive rate smaller than $\alpha$.
	
	Moreover, the stopping time under $Q$ then satisfies \begin{equation} \label{eq:exptau} \E_Q[\tau] \le \frac{\log\frac{1}{\alpha} }{\sum_{d}  \kl(\lawa_d,\lawb_d)} + \cO\left(  \sqrt{ \frac{\log\frac{1}{\alpha} }{\sum_{d}  \kl(\lawa_d,\lawb_d)}} \right) \end{equation}
\end{theorem}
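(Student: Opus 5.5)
We treat the three claims of \Cref{th:GRO} (type I error, power, expected stopping time) separately, using \Cref{lem:pointpoint} to reduce the GRO e-value to a simple likelihood ratio and then \Cref{lem:GRO} for the concentration.

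\textbf{Type I error.} Since $\Param_0$ is convex and closed, \Cref{lem:pointpoint} gives $\lawb\in\Param_0$, independent of $T$, such that the GRO e-value over $t$ steps is $E(X^{:t})=\prod_{s=1}^t \frac{\d Q}{\d P}(X^s)$ with $P=P_\lawb$. By \Cref{th:gro} each one-step factor is an e-value for every $P_{\param}$, $\param\in\Param_0$, conditionally on the past. Hence $(E(X^{:t}))_t$ is a test supermartingale under every element of $\Hyp_0$. Ville's inequality then gives $P_\param(\exists t\le T:\ E(X^{:t})\ge 1/\alpha)\le\alpha$, so the false positive rate is at most $\alpha$.

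\textbf{Power.} Under $Q$, power is $Q(\tau\le T)$, where $\tau$ is the first crossing time from \Cref{lem:GRO}. We apply \Cref{lem:GRO} with $\nu=\beta$ and $\varepsilon=1/2$. With probability at least $1-\beta$,
\[\tau\le 2\frac{\log\frac1\alpha+m(\lawa,\lawb)}{\sum_d\kl(\lawa_d,\lawb_d)}+\frac{\log(1/\beta)\sum_d\sigma_d^2}{2(\sum_d\kl(\lawa_d,\lawb_d))^2}.\]
This has exactly the form of the chosen $T$, except that $m(\lawa,\lawb)$ appears where the statement has $\sum_d\log\frac{1-\lawa_d}{1-\lawb_d}$. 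This discrepancy is the main obstacle. It matters because the term $m$ comes from the final overshoot step in the proof of \Cref{lem:GRO}.

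The first thing to try is to reopen the proof of \Cref{lem:GRO} and bound the overshoot differently. The log-likelihood ratio decomposes as $\sum_d\bigl[\log\frac{1-\lawa_d}{1-\lawb_d}+X_d\log\frac{\lawa_d(1-\lawb_d)}{\lawb_d(1-\lawa_d)}\bigr]$, so the relevant constant offset is $\sum_d\log\frac{1-\lawa_d}{1-\lawb_d}$. We can write $\tau>T$ as $\{S_{T}\le\log(1/\alpha)\}$, with $S_T$ the sum of $T$ i.i.d.\ log-ratios of mean $\sum_d\kl$, and apply Hoeffding's inequality with ranges $\sigma_d$ directly at time $T$. This avoids the overshoot entirely and may produce the offset term as stated. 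If it does not, we fall back on the stated $T$ being read with $m$, possibly after an orientation convention on $\lawa_d$ versus $\lawb_d$ that makes the two coincide.

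\textbf{Expected stopping time.} Let $K=\sum_d\kl(\lawa_d,\lawb_d)$. We write $\E_Q[\tau]=\sum_{n\ge0}Q(\tau>n)$ and, for each $n$, use Hoeffding as above:
\[Q(\tau>n)\le \exp\bigl(-2(nK-\log(1/\alpha))^2/(n\textstyle\sum_d\sigma_d^2)\bigr)\quad\text{for } n>\log(1/\alpha)/K.\]
We split the sum at $n_0=\log(1/\alpha)/K$. The terms with $n\le n_0$ contribute at most $n_0$. Beyond $n_0$, substituting $n=n_0+u$, the summand decays like a Gaussian in $u/\sqrt{n_0}$ (for $u\lesssim n_0$) and exponentially afterwards, so the tail sum is $\cO(\sqrt{n_0})$, with constants depending on $\sum_d\sigma_d^2/K$. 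This gives \Cref{eq:exptau}. Equivalently, one can integrate the high-probability bound of \Cref{lem:GRO} over $\nu$ with $\varepsilon\asymp n_0^{-1/2}$, which yields the same $n_0+\cO(\sqrt{n_0})$ after balancing the terms $\varepsilon n_0$ and $1/\varepsilon$.
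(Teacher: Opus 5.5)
Your proposal is sound, and for the power and the expected stopping time it takes a different route from the paper.

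\textbf{Comparison of routes.} The paper applies \Cref{lem:GRO} with $\varepsilon=\frac12$ and $\nu=\beta$. That lemma controls the crossing time itself, through the fundamental identity of sequential analysis, Hoeffding's lemma and the overshoot bound $\log E(X^{:\tau})\le\log\frac1\alpha+m(\lawa,\lawb)$. The paper then obtains \Cref{eq:exptau} by summing this tail bound with $\gamma=\sqrt{K/\log(1/\alpha)}$, which is your second option.

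You instead use $\{\tau>n\}\subseteq\{\log E(X^{:n})<\log\frac1\alpha\}$ together with Hoeffding at a fixed time. This is simpler and strictly sharper, because no overshoot enters. Finishing your computation, write $K=\sum_d\kl(\lawa_d,\lawb_d)$, $L=\log\frac1\alpha$, $\ell=\log\frac1\beta$ and $\Sigma=\sum_d\sigma_d^2$. For $TK>L$ you get $Q(\tau>T)\le\exp(-2(TK-L)^2/(T\Sigma))$. At $T_0=\frac{2L}{K}+\frac{\ell\Sigma}{2K^2}$ one checks that $(T_0K-L)^2-\frac{T_0\ell\Sigma}{2}=L^2\ge0$. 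Hence $T_0$, and any larger horizon, already gives power at least $1-\beta$, with no offset term at all.

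Your tail-sum argument for \Cref{eq:exptau} is also correct and avoids the paper's lengthy expansion. What the paper's route buys is a high-probability bound on the uncapped crossing time, which it reuses elsewhere. For a fixed horizon, that route costs the additive term $2m(\lawa,\lawb)/K$.

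\textbf{The offset discrepancy you flag.} Your bound yields the statement verbatim exactly when $\sum_d\log\frac{1-\lawa_d}{1-\lawb_d}\ge0$. Otherwise no argument can yield it, because the literal claim is then false in general. For instance, take $D\ge100$, $\lawa_d=0.95$ and $\lawb_d=\frac12$ for all $d$, $\Param_0=\{\lawb\}$, $\alpha=e^{-3D}$ and $\beta=0.05$. The stated $T$ is below $3.4$. Each sample raises $\log E$ by at most $D\log1.9$, so at least $5$ samples are needed to reach $1/\alpha$, and the power is $0$.

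The paper's own proof does not give the literal formula either. \Cref{lem:GRO} with $\varepsilon=\frac12$ produces $m(\lawa,\lawb)$. This is at least $\sum_d\log\frac{1-\lawa_d}{1-\lawb_d}$, with equality iff $\lawb_d\ge\lawa_d$ for every $d$; that is the only ``orientation'' under which your two readings coincide. The paper's appeal to \Cref{eq:mandsigma} points further, to the relaxation by $\sum_d\sigma_d$. So your fallback reading with $m$ is what the paper actually proves, and your fixed-time Hoeffding bound shows it remains true with the offset dropped entirely.
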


\begin{proof}
	From \Cref{lem:pointpoint}, with convex $\Param_0$, $P$ the RIPr of $Q$ onto $\Hyp_0$ can be written as $P=P_\lawb$ with $\lawb\in \Param_0$. Applying \Cref{lem:GRO} with $\varepsilon = \frac 1 2$ and applying \Cref{eq:mandsigma} yields the first result.
	
	The proof for \Cref{eq:exptau} can be found in \Cref{appsect:GROexp}.
\end{proof}

Note that \Cref{th:GRO} depends on the convexity of $\Param_0$. For non-convex $\Param_0$, we can still conduct the test for POE.

\begin{theorem}[Fixed sample size test for POE]\label{th:POE_samplesize}
	Assume that $\Param_0$ and $\Param_1$ are such that for any $\param\in \Param_0\cup \Param_1$, we have for all $d\in [D]$ that $\param_d \in (\varepsilon,1-\varepsilon)$. 
	Fixing $T$ as the sample size, where $T$ satisfies
	\begin{align*}
			T &\geq \frac{B +\log\frac{1}{\alpha}}{\KL(Q,P^\star)} + \frac{D\ln\frac{2}{\beta}\left(\log\frac{1}{\varepsilon}\right)^2}{\KL(Q,P^\star)^2} \\ 
			&\hspace{2em}+ \frac{\log\frac{1}{\varepsilon}\sqrt{D\ln\frac{2}{\beta}}}{\KL(Q,P^\star)^2}\sqrt{ 2\KL(Q,P^\star)\left(B +\log\frac{1}{\alpha}\right)  + D\ln\frac{2}{\beta}\left(\log\frac{1}{\varepsilon}\right)^2 } 
	\end{align*}
with $B =1+\log\left(\frac{1}{\prior\left(\mathcal{B}_{\infty}\left( \left( 1-\frac 1{TD}\right)\lawa, \frac{1}{TD}\right)\right)}\right)$,
	the described test for POE has power greater than $1-\beta$, and false positive rate smaller than $\alpha$.

\end{theorem}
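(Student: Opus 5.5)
The plan splits the statement into its two claims and treats them separately.

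\emph{False positive rate.} By \Cref{th:bayesian}, $(E_t)$ is a test supermartingale under every $P\in\Hyp_0$. Ville's inequality then gives
\[ P\left(\exists t\le T,\ E_t(X^{:t})\ge 1/\alpha\right)\le\alpha. \]
So the capped test rejects $\Hyp_0$ with probability at most $\alpha$, for any choice of $T$.

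\emph{Power.} Fix $Q=P_\lawa$ with $\lawa\in\Param_1$. Since rejecting at any time $t\le T$ counts as rejection, it suffices to show $P_Q\left(\log E_T(X^{:T})\ge\log\frac1\alpha\right)\ge 1-\beta$. I would apply \Cref{th:POE} with $\nu=\beta/2$, so the failure probability is $2\nu=\beta$. On the good event,
\[ \log E_T\ge T\,\KL(Q,P^\star)-B-\log(1/\varepsilon)\sqrt{2TD\log(2/\beta)}, \]
because $1+\log\left(1/\prior(\mathcal{B}_\infty(\cdot))\right)$ is exactly $B$. It therefore remains to check that the stated lower bound on $T$ implies
\[ K T - c\sqrt{T} - A\ge 0,\qquad K=\KL(Q,P^\star),\quad A=B+\log\tfrac1\alpha,\quad c=\log\tfrac1\varepsilon\sqrt{2D\ln\tfrac2\beta}. \]

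\emph{Solving the quadratic.} This is a quadratic inequality in $s=\sqrt T$: $Ks^2-cs-A\ge0$ holds iff $s\ge \frac{c+\sqrt{c^2+4KA}}{2K}$. Squaring gives the equivalent condition
\[ T\ge \frac{A}{K}+\frac{c^2}{2K^2}+\frac{c\sqrt{c^2+4KA}}{2K^2}. \]
Substituting $c^2=2D\ln\frac2\beta(\log\frac1\varepsilon)^2$ gives $\frac{c^2}{2K^2}=\frac{D\ln\frac2\beta(\log\frac1\varepsilon)^2}{K^2}$, and
\[ \frac{c\sqrt{c^2+4KA}}{2K^2}=\frac{\log\frac1\varepsilon\sqrt{D\ln\frac2\beta}}{K^2}\sqrt{2KA+D\ln\tfrac2\beta(\log\tfrac1\varepsilon)^2}, \]
which is exactly the third term of the stated threshold. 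Hence any $T$ satisfying the hypothesis makes $\log E_T\ge\log\frac1\alpha$ on the good event, and the power is at least $1-\beta$.

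\emph{Main obstacle.} The term $B$ depends on $T$ through the ball radius $\frac{1}{TD}$, so the condition on $T$ is implicit. The argument does not need to resolve this: it holds for any $T$ satisfying the inequality with $B$ evaluated at that same $T$. I would state this explicitly, noting that such a $T$ exists under mild prior mass conditions, for instance the $D\log(TD)$ growth of $B$ under a uniform prior (\Cref{cor:POEuni}).
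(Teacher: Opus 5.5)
Your proof is correct and takes essentially the same route as the paper: apply \Cref{th:POE} with $\nu=\beta/2$, then show the stated threshold forces $KT-c\sqrt{T}-A\ge 0$ by solving the quadratic. The differences are cosmetic. You solve the quadratic directly in $s=\sqrt{T}$, which avoids the paper's contradiction argument and its assumption that $TK-A\ge 0$ before squaring, and handles the equality case cleanly. You also make the false-positive bound via Ville's inequality explicit, which the paper leaves implicit.
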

The proof of this result can be found in \Cref{appsect:fixedPOE}. Note that \Cref{th:POE} can be applied with a uniform prior, yielding $B= 1+D\log(TD)$, or to $\prior$ a Dirac distribution $\delta_{\lawa}$, yielding a result for GRO with $B=1$.

Also remark that, from \cite[Theorem 3.1]{agrawalStoppingTimesPowerone2025}, the lower bound on sample size has a first order in $\frac{\log(1/\alpha)}{\KL(Q,P^\star)}$ with $P^\star$ the RIPr of $Q$ as $\alpha$ goes to 0, all other variables remaining constant. This matches our results, up to a constant factor in the case of GRO. Note that we can optimise the $\varepsilon$ chosen in the application \Cref{lem:GRO}; by making $\varepsilon$ small, we can make this constant factor arbitrarily close to 1.


\subsection{Fixed confidence test}\label{sect:PPOE}

We have the following result for the Fixed confidence test, direct consequence of Ville's inequality applied to both $E$ and $E'$:
\begin{theorem}\label{th:doubletest}
	The testing procedure for stopped e-sequential test described above has, at any time, a false positive rate smaller than $\alpha$, and a power higher than $1-\beta$. 
\end{theorem}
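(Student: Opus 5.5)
The plan is to control each type of error with Ville's inequality applied to the corresponding sequence of e-values. I will treat the case where $E$ and $E'$ are test supermartingales, which holds for the PPOE test by \Cref{th:bayesian}.

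For type I error, fix $P\in\Hyp_0$. The only way the procedure wrongly rejects $\Hyp_0$ is if there exists some time $t$ with $E(X^{:t})\ge 1/\alpha$. This requires the corresponding stopping event to be reached, but it is contained in the event $\{\sup_{t\ge1}E(X^{:t})\ge 1/\alpha\}$. Since $(E(X^{:t}))_t$ is a nonnegative supermartingale under $P$ with $E(X^{:0})=1$, Ville's inequality gives
\[P\Bigl(\exists t\ge 1,\ E(X^{:t})\ge \tfrac1\alpha\Bigr)\le \alpha.\]
This bound holds uniformly over time, which is where "at any time" comes from. It also holds regardless of the behaviour of $E'$, because we only bound a superset of the rejection event.

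For type II error the argument is symmetric. Fix $Q\in\Hyp_1$. Rejecting $\Hyp_1$ requires $E'(X^{:t})\ge 1/\beta$ for some $t$. Ville applied to $E'$, a test supermartingale under every $Q\in\Hyp_1$, bounds this probability by $\beta$. The probability of rejecting $\Hyp_0$ at or before any time is therefore at least $1-\beta$ minus the probability of not having stopped yet. If the claim is that power is at least $1-\beta$, I must also argue that $\tau<\infty$ almost surely under $Q$. I would obtain this from \Cref{th:POE} or \Cref{lem:GRO}: under $Q$, $\log E(X^{:t})$ grows linearly at rate $\KL(Q,P^\star)>0$, so it crosses $\log(1/\alpha)$ in finite time almost surely. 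Hence under $Q$ the test stops, and it rejects $\Hyp_1$ with probability at most $\beta$.

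The main subtlety is that the statement only gives $\E[E(X^{:t})]\le 1$ for each fixed $t$, which is weaker than the supermartingale property Ville needs. I would resolve this by using that our constructions (POE, and its counterpart with the roles of $\Hyp_0$ and $\Hyp_1$ swapped) are test supermartingales. Alternatively, I would argue that they are anytime-valid, i.e.\ $\E[E_\sigma]\le 1$ for all stopping times $\sigma$, and apply Markov's inequality at $\sigma=\inf\{t: E_t\ge 1/\alpha\}$ truncated at $n$, then let $n\to\infty$.
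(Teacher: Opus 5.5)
Your core argument is the paper's proof, which is a one-liner: Ville's inequality applied to $E$ under $\Hyp_0$ and to $E'$ under $\Hyp_1$. Your remark that the fixed-$t$ bound $\E[E(X^{:t})]\le 1$ is not enough, and that one needs the test-supermartingale property of \Cref{th:bayesian} (or optional stopping plus Markov), is correct; the paper leaves it implicit. What this argument proves is $\P_P(\text{reject }\Hyp_0)\le\alpha$ for $P\in\Hyp_0$ and $\P_Q(\text{reject }\Hyp_1)\le\beta$ for $Q\in\Hyp_1$. That is the only sense in which the paper establishes ``power higher than $1-\beta$''.

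The step that fails is your upgrade to $\P_Q(\text{reject }\Hyp_0)\ge 1-\beta$ by claiming $\tau<\infty$ almost surely because $\KL(Q,P^\star)>0$. Nothing in the hypotheses gives $\KL(Q,P^\star)>0$, and for non-convex $\Param_0$ it can be false: a product Bernoulli outside $\Param_0$ can be a mixture of product Bernoullis from $\Param_0$.

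Here is a concrete case for Schulze with $N=3$. Fix $\pr{1,2}=0.9$ and mix the three points $(\pr{1,3},\pr{2,3})=(0.11,0.91),(0.51,0.01),(0.99,0.99)$, all in $\Param_0$. Take weights $\approx(0.29,0.5,0.21)$, chosen so that $\pr{1,3}$ and $\pr{2,3}$ are uncorrelated under $W$. Then $P_W$ is exactly the product Bernoulli with mean $\approx(0.9,0.49,0.48)$, where candidate 3 is the Condorcet winner, so $Q=P_W\in\Hyp_1$.

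For this $Q$, the inequality $\E_{X^t\sim P_W}[P_{W_{1,t}}(X^t)/P_{W_{0,t}}(X^t)\mid X^{:t-1}]\le 1$ from the proof of \Cref{th:bayesian} makes $E$ a test supermartingale under $Q$ as well. Hence $\P_Q(\tau<\infty)\le\alpha+\beta$ and $\P_Q(\text{reject }\Hyp_0)\le\alpha$, so the literal power claim fails.

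Even when $\KL(Q,P^\star)>0$ (e.g.\ closed convex $\Param_0$), the tools you cite carry extra conditions:
\begin{itemize}
\item \Cref{th:POE} needs $\param_d\in(\varepsilon,1-\varepsilon)$ and a prior with $\log\bigl(1/\prior(\mathcal{B}_{\infty}((1-\frac{1}{TD})\lawa,\frac{1}{TD}))\bigr)=o(T)$.
\item \Cref{lem:GRO} only covers the Dirac-prior likelihood ratio with convex $\Param_0$, not the PPOE e-value.
\end{itemize}

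So you have two options. Drop the termination step and state the result as the two wrong-rejection bounds; your proof is then complete and coincides with the paper's. Or add these assumptions explicitly, in which case \Cref{th:auxfc} gives $\E_Q[\tau]<\infty$ and hence termination.
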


We then have, for the PPOE test, the following guarantee:

\begin{theorem}\label{th:auxfc}
	For any $Q=P_\lawa\in\Hyp_i$, for any $\gamma\in (0,1)$, the described PPOE test with prior $\prior$ satisfies $$\E_Q[\tau] \leq  T+\frac{2}{1-\exp\left[ -\frac{\gamma^2 \KL(Q,P^\star)}{\left(\log \frac{1}{\varepsilon}\right)^2 2 D}\right]}$$ for any $T$ that satisfies $$T \geq \frac{\log\frac{1}{\nu_i}+1+\log\left(\frac{1}{\prior(\mathcal{B}_{\infty}\left( \left( 1-\frac 1{TD}\right)\lawa, \frac{1}{TD}\right))}\right)}{(1-\gamma)\KL(Q,P^\star)} $$ where $P^\star$ is the RIPr of $Q$ onto $\Hyp_{1-i}$, and $\nu_i = \alpha$ if $i=1$ and $\beta$ otherwise.
\end{theorem}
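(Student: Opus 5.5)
We bound the expectation through the tail-sum formula $\E_Q[\tau]=\sum_{t\ge 0}\Pr_Q(\tau>t)\le T+\sum_{t\ge T}\Pr_Q(\tau>t)$. Fix $i$ and $Q=P_\lawa\in\Hyp_i$, and let $E^{(i)}_t$ denote the POE e-variable that tests $\Hyp_{1-i}$ against $\Hyp_i$, built with the prior $\prior$. This is $E$ if $i=1$ and $E'$ if $i=0$. Its rejection threshold is $1/\nu_i$. Since $\tau$ is at most the first time $E^{(i)}_t\ge 1/\nu_i$, we have
\[\Pr_Q(\tau>t)\le \Pr_Q\!\left(\log E^{(i)}_t<\log\tfrac1{\nu_i}\right).\]
It therefore suffices to show that, for every $t\ge T$, this probability is at most $\exp[-c\,t]$ with $c=\gamma^2\KL(Q,P^\star)/(2D(\log\frac1\varepsilon)^2)$. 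Summing a geometric series then gives at most $1/(1-e^{-c})$, and the factor $2$ in the statement leaves room for the two-sided union bound in \Cref{th:POE}.

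To bound the tail at time $t\ge T$, I apply \Cref{th:POE} with confidence parameter $\nu$. It gives, with probability at least $1-2\nu$,
\[\log E^{(i)}_t\ge t\KL(Q,P^\star)-1-\log\frac{1}{\prior(\mathcal B_t)}-\log\tfrac1\varepsilon\sqrt{2tD\log(1/\nu)},\]
where $\mathcal B_t=\mathcal{B}_{\infty}((1-\frac1{tD})\lawa,\frac1{tD})$. The argument is symmetric in $i$, since \Cref{th:POE} only requires the alternative's prior and the RIPr onto the other hypothesis. I then split the margin $t\KL(Q,P^\star)$ into $(1-\gamma)t\KL$ and $\gamma t\KL$. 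The first part absorbs $\log\frac1{\nu_i}+1+\log(1/\prior(\mathcal B_t))$. This is exactly the condition imposed on $T$, provided that condition propagates from $T$ to all $t\ge T$; I discuss this below. For the second part I choose $\nu$ so that $\log\frac1\varepsilon\sqrt{2tD\log(1/\nu)}=\gamma t\KL(Q,P^\star)$, that is,
\[\log\frac1\nu=\frac{\gamma^2t\KL(Q,P^\star)^2}{2D(\log\frac1\varepsilon)^2}.\]
This yields $\Pr_Q(\tau>t)\le 2\nu=2e^{-c't}$. Matching the exponent in the statement requires the exponent to be linear in $\KL$ rather than in $\KL^2$. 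I would obtain this either by a slightly sharper use of the martingale (Azuma) step inside \Cref{th:POE}, where the deviation term is compared with $\gamma t\KL$ directly, or by noting that $\KL\le D\log\frac1\varepsilon$ under the boundedness assumption; in either form the geometric sum gives the claimed $2/(1-\exp[\cdot])$.

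I expect the main obstacle to be the requirement that the condition on $T$ hold for every $t\ge T$, not just at $t=T$. The right-hand side of that condition involves $\log(1/\prior(\mathcal B_t))$, and this term grows with $t$ because the ball shrinks. I would argue that it grows only logarithmically; for example, for priors with a density bounded below it is of order $D\log(tD)$. Consequently $(1-\gamma)t\KL$ minus this term is nondecreasing once it is nonnegative, so the inequality at $T$ implies the inequality for all larger $t$. If this monotonicity fails for a general prior, I would instead define $T$ as a time beyond which the condition holds for all $t$, which is the natural reading of the statement.
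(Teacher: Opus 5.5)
Your overall route is the paper's own. It uses the tail-sum bound and applies \Cref{th:POE} at every $t\ge T$. It splits $t\KL(Q,P^\star)$ into a $(1-\gamma)$ part that absorbs $\log\frac1{\nu_i}+1+\log(1/\prior(\mathcal B_t))$ and a $\gamma$ part that absorbs the McDiarmid deviation, and finishes with a geometric sum. The gap is in the final exponent. Your calculation correctly gives $\Pr_Q(\tau>t)\le 2e^{-c't}$ with $c'=\gamma^2\KL(Q,P^\star)^2/(2D(\log\frac1\varepsilon)^2)$. The statement instead needs $c=\gamma^2\KL(Q,P^\star)/(2D(\log\frac1\varepsilon)^2)$, and $c'\ge c$ holds exactly when $\KL(Q,P^\star)\ge 1$.

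Neither of your patches closes this. The second goes the wrong way. The inequality $\KL(Q,P^\star)\le D\log\frac1\varepsilon$ is an upper bound, so it can only bound $c'$ from above and never yields $e^{-c't}\le e^{-ct}$. What you would need is the lower bound $\KL(Q,P^\star)\ge 1$. That is not assumed, and it fails in the relevant regime near the boundary between $\Param_0$ and $\Param_1$. The first patch is not an argument. The bounded-difference inequality inside \Cref{th:POE} has fluctuation scale $\log\frac1\varepsilon\sqrt{tD\log(1/\nu)}$ regardless of $\KL$, so equating it with $\gamma t\KL(Q,P^\star)$ necessarily produces $\KL^2$. A linear dependence would require a variance-sensitive (Bernstein/Freedman-type) bound for the increments $\log\bigl(P_{W_{1,t}}(X^t)/P_{W_{0,t}}(X^t)\bigr)$ with variance proxy of order $\KL(Q,P^\star)$. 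The paper does not provide such a bound, and it would not produce the stated constant anyway.

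The paper's proof contains exactly this step. It writes $\exp\bigl[-\bigl(\gamma t\KL(Q,P^\star)/(\log\frac1\varepsilon\sqrt{2tD})\bigr)^2\bigr]$ and then sums it as though the exponent were $\gamma^2 t\KL(Q,P^\star)/(2D(\log\frac1\varepsilon)^2)$, silently dropping one factor of $\KL(Q,P^\star)$. So what this route actually proves is the bound with $\KL(Q,P^\star)^2$ in place of $\KL(Q,P^\star)$ inside the exponential. That implies the stated bound only when $\KL(Q,P^\star)\ge 1$. You should state and prove that version rather than claim a match.

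Your other worry is well founded: the condition on $T$ must hold for every $t\ge T$, because $\log(1/\prior(\mathcal B_t))$ grows as the ball shrinks. The paper uses this implicitly, without comment, when it bounds $\beta_t$ for all $t>T$. Your fix is sound: use logarithmic growth for priors with density bounded below, or read $T$ as a time after which the condition holds for all $t$.
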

The proof of this result can be found in \Cref{appsect:proofauxfc}.

For GRO (i.e. Dirac prior), this becomes \begin{align*} \E_Q[\tau] &\leq \frac{1+\log\frac{1}{\nu_i}}{\KL(Q,P^\star) (1-\gamma)} + \frac{2}{1-\exp\left[ -\frac{\gamma^2 \KL(Q,P^\star)}{\left(\log \frac{1}{\varepsilon}\right)^2 2 D}\right]}\end{align*} for any $\gamma\in (0,1)$. 
For POE with uniform prior, the condition on $T$ becomes $$T\geq \frac{\log\frac{1}{\nu_i}+1+D\log(TD)}{(1-\gamma)\KL(Q,P^\star)}.$$
In both cases, the first order in $\nu_i$ again matches the lower bound of \cite[Theorem 3.1]{agrawalStoppingTimesPowerone2025}.

Note that the bound of \Cref{lem:GRO} cannot be applied for parallel tests; indeed, this would require (from \Cref{lem:pointpoint}) that both $\Param_0$ and $\Param_1$ be convex for both GRO e-values to be likelihood ratios between two distributions with Dirac prior, which is never the case in our setting.

        
\section{Computing POE: a Frank-Wolfe algorithm for reverse information projection}
To compute POE, we need to solve a reverse information projection problem at each timestep to compute $W_{0,t}$ the projection of the posterior onto $\Hyp_0$. The generic method used so far (when no specific method such as \cite[Equation (11)]{turnerExactAnytimevalidConfidence2022} could be found) is Li's algorithm (see details in \Cref{sec:Li}). However, this algorithm is rather slow and, in high dimension, it is not computable, due to the very high complexity of the (non-linear) optimisation problem at each iteration. We give here an alternative way to do reverse information projection. 
For that, we look towards the Frank-Wolfe algorithm, introduced in \cite{frankAlgorithmQuadraticProgramming1956}. This algorithm aims to find $x$ that minimises a smooth function $f$ on a convex set, by making smaller and smaller steps in the directions $y_t$ that minimise the gradient $\nabla f(x_t)^\top y_t$. Here, the function we want to minimise is $P\mapsto \KL(P_{W_{1,t}},P)$. 
While this algorithm can be applied in multiple settings, we will apply the results of the finite-dimension case found in \cite[Section 3.3]{bubeckConvexOptimizationAlgorithms2015}. Indeed, one can note that, in our multi-Bernoulli setting, the support of all the $P_W$ are finite: $P_W$ is uniquely determined by the $2^D$ coefficients $P_W(x)$ for $x\in\{0,1\}^D$. Therefore, we can recast the problem in $\R^{2^D}$. The scalar product in $\R^{2^D}$ is \[ \langle P,Q \rangle = \sum_x P(x) Q(x) = \E_P[Q] . \] This manipulation can be conducted for any finite-support distribution.

For any function $f$ and $x_t$, the minimisation problem $\argmin_{y_t}\nabla f(x_t)^\top y_t$ is linear, we can restrict the search for the minimiser $y_t$ in the extremal points of our convex set. 
Since we want to find the $P_W$ minimising $\KL(P_{W_{1,t}},P_W)$, we need to know the extremal points of the set of $P_W$ where $W\in \Prior_0$.

\begin{lemma}
	The extremal points of $P_{\Prior'} = \{P_W,W\in \Prior'\}$ for $\Prior'$ the set of priors over some $\Param'\subseteq \Param$ are all in the set $\{P_{\delta_\param},\param\in \Param'\}$.
\end{lemma}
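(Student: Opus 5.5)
The plan is to show that any element of $P_{\Prior'}$ that is \emph{not} of the form $P_{\delta_\param}$ can be written as a nontrivial convex combination of two distinct elements of $P_{\Prior'}$. It then cannot be an extremal point, which gives the claimed inclusion. Everything rests on the fact that the map $W\mapsto P_W=\int P_\param \,\d W(\param)$ is affine. Concretely, for $W=\lambda W'+(1-\lambda)W''$ with $W',W''\in\Prior'$ and $\lambda\in(0,1)$, linearity of the integral gives $P_W=\lambda P_{W'}+(1-\lambda)P_{W''}$. Since $\Prior'$ is convex, $P_{\Prior'}$ is also convex, so extremal points make sense in $\R^{2^D}$.

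The key step is to decompose a non-Dirac prior. Let $W\in\Prior'$ be such that $P_W\neq P_{\delta_\param}$ for every $\param\in\Param'$. In particular $W$ is not a Dirac mass, so its support contains at least two points. I then pick a measurable set $A\subset\Param'$ with $\lambda:=W(A)\in(0,1)$, which exists because $W$ is not concentrated on a single point. I define the conditional priors $W'=W(\cdot\cap A)/\lambda$ and $W''=W(\cdot\cap A^c)/(1-\lambda)$. Both lie in $\Prior'$, and $W=\lambda W'+(1-\lambda)W''$, so $P_W=\lambda P_{W'}+(1-\lambda)P_{W''}$.

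The main obstacle is that this decomposition could be trivial, namely $P_{W'}=P_{W''}$ (and then both equal $P_W$), since $\param\mapsto P_\param$ is not injective on mixtures. I would handle this through the choice of $A$. Write $\param\mapsto P_\param(x)$, which is continuous in $\param$ for every $x\in\{0,1\}^D$. Suppose that for every choice of $A$ we had $\int_A P_\param\,\d W=W(A)\,P_W$. Then $P_\param=P_W$ for $W$-almost every $\param$, by the usual density argument applied coordinatewise in $\R^{2^D}$. Pick any $\param_0$ in the support of $W$ where this holds; then $P_W=P_{\param_0}=P_{\delta_{\param_0}}$, which contradicts our assumption. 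Hence some $A$ gives $P_{W'}\neq P_{W''}$.

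In that case $P_W$ is a strict convex combination of two distinct points of $P_{\Prior'}$, so it is not extremal. Therefore every extremal point of $P_{\Prior'}$ equals $P_{\delta_\param}$ for some $\param\in\Param'$.
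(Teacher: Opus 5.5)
Your proof is correct and takes the same route as the paper. You split a non-Dirac prior $W$ into its normalised restrictions to a set $A$ and to its complement, then use that $W\mapsto P_W$ is affine to write $P_W$ as a strict convex combination. Your extra care fills in exactly the points the paper's proof passes over when it calls a combination non-trivial merely because its weights are nonzero: you choose $A$ with $W(A)\in(0,1)$ rather than just a nonempty piece of the support, you rule out the degenerate case $P_{W'}=P_{W''}$ via the $W$-a.e.\ argument, and you phrase the contrapositive in terms of $P_W$ rather than $W$.
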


\begin{proof}
	By definition, an extremal point of $P_{\Prior'}$ is a point which cannot be written as a non-trivial convex combination (i.e. a convex combination with all weights non-zero) of points in $P_{\Prior'}$. Suppose that $P_W$ is an extremal point of $P_{\Prior'}$ for some $W$ that is not a Dirac distribution. Because $W$ is not a Dirac distribution, its support is not a singleton and we can partition $\supp(W)$ into two disjoint non-empty sets $A$ and $B$. We can then write 
	\[ P_W(\cdot) = P_{W \mid A}(\cdot) \P_W(A) + P_{W \mid B}(\cdot) \P_W(B),\]
	which means that $P_W$ is a convex combination of $P_{W \mid A}$ and $P_{W \mid B}$ and it is not a trivial convex combination because $A$ and $B$ are both non-empty. This is a contradiction with $P_W$ an extremal point of $P_{\Prior'}$.
\end{proof}

We will therefore need to take steps in the direction of those $P_{\delta_\param}$. To know in which direction we step, we need to know the gradient of the function to minimise. Simple computation yields the following lemma proved in \Cref{sec:proof_deriv_fw}.

\begin{lemma}\label{lem:deriv_fw}
	For any $W_0,W'\in\Prior_0$ and $W_1\in\Prior_1$, we have \[ \frac{\d}{\d t} \KL(P_{W_1},(1-t)P_{W_0} + t P_{W'})\mid_{t=0} = 1-\E_{P_{W'}} \left[ \frac{P_{W_1}}{P_{W_0}}\right] \] so that \[ \argmin_{P_{W'}} \nabla \KL (P_{W_1},P_{W_0}) ^\top P_{W'} =\argmin_{P_{W'}} -\E_{P_{W'}} \left[ \frac{P_{W_1}}{P_{W_0}}\right]  \] (where the gradient is taken over the second argument of the KL).
\end{lemma}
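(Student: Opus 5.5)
We compute the derivative directly on the finite support $\{0,1\}^D$, where $P_{W_1}$, $P_{W_0}$ and $P_{W'}$ are vectors in $\R^{2^D}$. We write
\[ \KL(P_{W_1},P_t) = \sum_x P_{W_1}(x)\log P_{W_1}(x) - \sum_x P_{W_1}(x)\log P_t(x), \]
with $P_t := (1-t)P_{W_0}+tP_{W'}$. Only the second sum depends on $t$. Since $P_t(x)$ is affine in $t$, the derivative of $-P_{W_1}(x)\log P_t(x)$ at $t=0$ is
\[ -P_{W_1}(x)\,\frac{P_{W'}(x)-P_{W_0}(x)}{P_{W_0}(x)}. \]
Summing over the finite set of $x$, the derivative and the sum can be exchanged trivially.

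This gives
\[ -\sum_x P_{W'}(x)\frac{P_{W_1}(x)}{P_{W_0}(x)} + \sum_x P_{W_1}(x). \]
The second term equals $1$ because $P_{W_1}$ is a probability distribution. The first term is $-\E_{P_{W'}}[P_{W_1}/P_{W_0}]$, by the scalar product identification $\langle P,Q\rangle=\E_P[Q]$ introduced above. Together these give the claimed formula.

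For the $\argmin$ statement, we use the chain rule in $\R^{2^D}$. The directional derivative at $P_{W_0}$ in direction $P_{W'}-P_{W_0}$ equals $\nabla \KL(P_{W_1},P_{W_0})^\top(P_{W'}-P_{W_0})$. The term $-\nabla\KL^\top P_{W_0}$ does not depend on $W'$, and neither does the constant $1$. Hence minimising $\nabla \KL^\top P_{W'}$ over $W'$ is the same as minimising $-\E_{P_{W'}}[P_{W_1}/P_{W_0}]$. One can also see this explicitly: the gradient coordinates are $-P_{W_1}(x)/P_{W_0}(x)$, so $\nabla\KL^\top P_{W'} = -\E_{P_{W'}}[P_{W_1}/P_{W_0}]$ exactly.

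The only delicate point is well-definedness, which requires $P_{W_0}(x)>0$ wherever $P_{W_1}(x)>0$. This holds here because all parameters lie in $(0,1)^D$ (or $(\varepsilon,1-\varepsilon)^D$), so every mixture $P_W$ has full support on $\{0,1\}^D$. Beyond this check, the computation is routine.
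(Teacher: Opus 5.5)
Your proof is correct and follows the paper's argument. Both proofs differentiate the log term of the KL along the segment and evaluate at $t=0$. Both then use $\sum_x P_{W_1}(x)=1$ together with the identity $\E_{P_{W_1}}[P_{W'}/P_{W_0}]=\E_{P_{W'}}[P_{W_1}/P_{W_0}]$, and obtain the argmin by dropping the $W'$-independent term $\nabla \KL(P_{W_1},P_{W_0})^\top P_{W_0}$. Your extra steps do not change the route: the coordinatewise computation on $\{0,1\}^D$, the explicit gradient coordinates $-P_{W_1}(x)/P_{W_0}(x)$, and the full-support check (which the paper leaves implicit) are added care.
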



The Frank Wolfe algorithm applied to our problem (compute the $W_0\in \Prior_0$ minimising $\KL(P_{W_1},P_{W_0})$) thus becomes Algorithm~\ref{alg:FW}.

\begin{algorithm}[!ht]
	\caption{The Frank Wolfe algorithm}
	\label{alg:FW}
	\KwData{Initialisation $W_0$, distribution $P_{W_1}$}
	\For{$k=0,1,\dots$}{
		Compute the minimiser over the corners \[\param_k = \argmin_{\param\in\Param_0} -\E_{P_{\delta_\param}} \left[ \frac{P_{W_1}}{P_{W_0}}\right]\]  \\ \label{line:easymin}
		Update the current estimate \[W_0\gets \left( 1-\frac{2}{2+k}\right) W_0 + \frac{2}{2+k} \delta_{\param_k}\]
	}
\end{algorithm}

While we use this manipulation (using the finiteness of support of our distributions to recast the problem in finite dimension) to be able to apply Frank-Wolfe in the simple case of functions on $\R^{2^D}$ \citep{bubeckConvexOptimizationAlgorithms2015}, we hypothesise that it could be applied to more general measure sets. We keep this for future work.

In practice, the stopping criterion can be that the gap between $W_0$ at steps $k$ and $k+1$ is small enough, or (if it is easily computable) that the $\KL$ between $P_{W_1}$ and $P_{W_0}$ is not changing much anymore. However, for more theoretically guaranteed ways of stopping, we have to consider our specific setting.

\subsection{Implementation details}\label{sect:fwimpl}

The minimisation problem of the Frank Wolfe algorithm is easier to compute than Li's algorithm. Indeed, in Algorithm~\ref{alg:FW}, the minimisation problem is linear in $\delta_\param$ (the $P_{W_0}$ in the expectation does not depend on $\param$). Recall that the parameter $\param$ is the preference matrix $\PrM = (\pr{i,j})_{1\leq i,j\leq N}$; since the support of any $P_W$ is finite, we can thus also see $\E_{P_{\delta_\param}} \left[ \frac{P_{W_1}}{P_{W_0}}\right]$ as a polynomial of degree 1 in each of the $\pr{i,j}$, which makes the optimisation problem smooth and that we can solve with off-the-shelf optimisation algorithms, even potentially for non convex $\Param_0$. 

Furthermore, for a $\Param_0$ that is a finite reunion of convex sets $\Param_{0,1},\dots,\Param_{0,J}$, one can solve Line \ref{line:easymin} by solving it over each $\Param_{0,j}$ to get a $\param_j$, picking $j^\star \in \argmin_{j\in [J]} 1-\E_{P_{\delta_{\param_j}}}\left[ \frac{P_{W_1}}{P_{W_0}}\right]$, and using $\param_{j^\star}$.

In high dimension, merely computing the $\KL$ might be tricky. For $N=12$ candidates, $D=66$, so computing one expectation yields a sum over $2^{66}$ terms. However, if $\Param_0$ is convex, then, by \Cref{lem:pointpoint}, the RIPr of any $P_{W_1}$ onto $P_{\Param_0}$ has a prior with singleton support. Therefore, if we assume that all samples are taken independently (meaning that $X_{d}$ is independent from $X_{d'}$ for $d\neq d'$), then for a Dirac initialisation $W_0=\delta_{\param_0}$,\begin{align*}
	\E_{\delta_\param} \left[\frac{P_{W_1}}{P_{\delta_{\param_0}}}\right] &= \int \prod_{d=1}^D \frac{P_{\param^d}(X_d)}{P_{\param_0^d}(X_d)} P_{W_1}(X) \d X\\
	&= \int \prod_{d=1}^D \frac{P_{\param^d}(X_d)}{P_{\param_0^d}(X_d)} \left[ \int P_{\param'}(X) \d W_1(\param') \right] \d X \\
	&= \int \prod_{d=1}^D \frac{P_{\param^d}(X_d) P_{\param'^d}(X_d)}{P_{\param_0^d}(X_d)} \d X \d W_1(\param')
	\end{align*}
	and, using independence, \begin{align*}
		\E_{\delta_\param} \left[\frac{P_{W_1}}{P_{\delta_{\param_0}}}\right] &= \int \left( \prod_{d=1}^D \int \frac{P_{\param^d}(X_d) P_{\param'^d}(X_d)}{P_{\param_0^d}(X_d)} \d X_d\right) \d W_1(\param') \\
		&= \int \prod_{d=1}^D \E_{\Ber(\param^d)}\left[ \frac{\Ber(\param'^d)}{\Ber(\param_0^d)}\right] \d W_1(\param').
	\end{align*}
	Thus, for each $\param$, we can compute the objective function with linear complexity in $D$, and it remains to compute the integral over $W_1$ (which can be done by sampling over $W_1$).

While the Frank-Wolfe algorithm has convergence guarantees (see \cite[Theorem 3.8]{bubeckConvexOptimizationAlgorithms2015}), those are only valid for $\beta$-smooth functions. However, our $\KL$ function is not $\beta$-smooth: for $W$ arbitrarily close to the borders of $\Param$, the gradient of $\KL(P_{W_1},P_W)$ explodes. To solve that, we no longer consider $\Param$, but instead exclude the outer borders, by considering $\Param_\varepsilon = \left\{ \PrM, \forall i,j, \, \left| \log\frac{\pr{i,j}}{1- \pr{i,j}} \right| \leq \log(1/\varepsilon) \right\}$; our function is thus $\beta$-smooth with $\beta = \frac{1}{\varepsilon^{2D}}$ (and behaves way more smoothly far from the borders of $\Param_\varepsilon$), but we have lost sensitivity near the outer borders of $\Param$.

Note that the restriction $\param \in \Param_\varepsilon$ is strictly less restrictive than $\param_d \in (\varepsilon,1-\varepsilon)$ necessary in \Cref{th:POE}: in $\Param_\varepsilon$, the parameters satisfy $\param_d \in \left( \frac{\varepsilon}{1+\varepsilon},1-\frac{\varepsilon}{1+\varepsilon}\right)$.

We thus have a way to compute POE for a multitude of new settings, while the approach of \cite{turnerExactAnytimevalidConfidence2022} is limited to convex parameter sets.

\section{Experimental results in testing preferences}\label{sec:experimentation}

\subsection{Comparisons on synthetic examples}

In this section, we apply our tests and compare to state of the art methods. We first compare these methods in a simulated setting of testing whether one candidate is the winner in Borda, Condorcet and Schulze voting systems with 3 candidates. We fix $\Hyp_0$: ``candidate 1 is the winner'', and $\Hyp_1$: ``candidate 1 is not the winner. Then, we look at the effect of having access to prior data to inform $\pi$ beforehand, and on increasing the number of candidates.

\subsubsection{Power and Sample Complexity Comparison of Sequential Tests for Testing Condorcet, Borda and Schulze Winners}
In this section, we execute $M=1000$ repetitions of sequential tests with data simulated from a multivariate Bernoulli distribution $(X_{1,2},X_{1,3},X_{2,3})\sim \Ber(x) \otimes \Ber(0.6) \otimes \Ber(0.1)$ for a range of $x$ between $0.05$ and $0.95$. In order to keep the computation manageable, if the test did not conclude at time $t=100$, we stop testing and return ``Accept''. We plot the following two metrics: empirical probability to reject, and empirical average of sample size used to conclude. 

\begin{figure}[!ht]
	\begin{center}
		\includegraphics[width=1.1\textwidth]{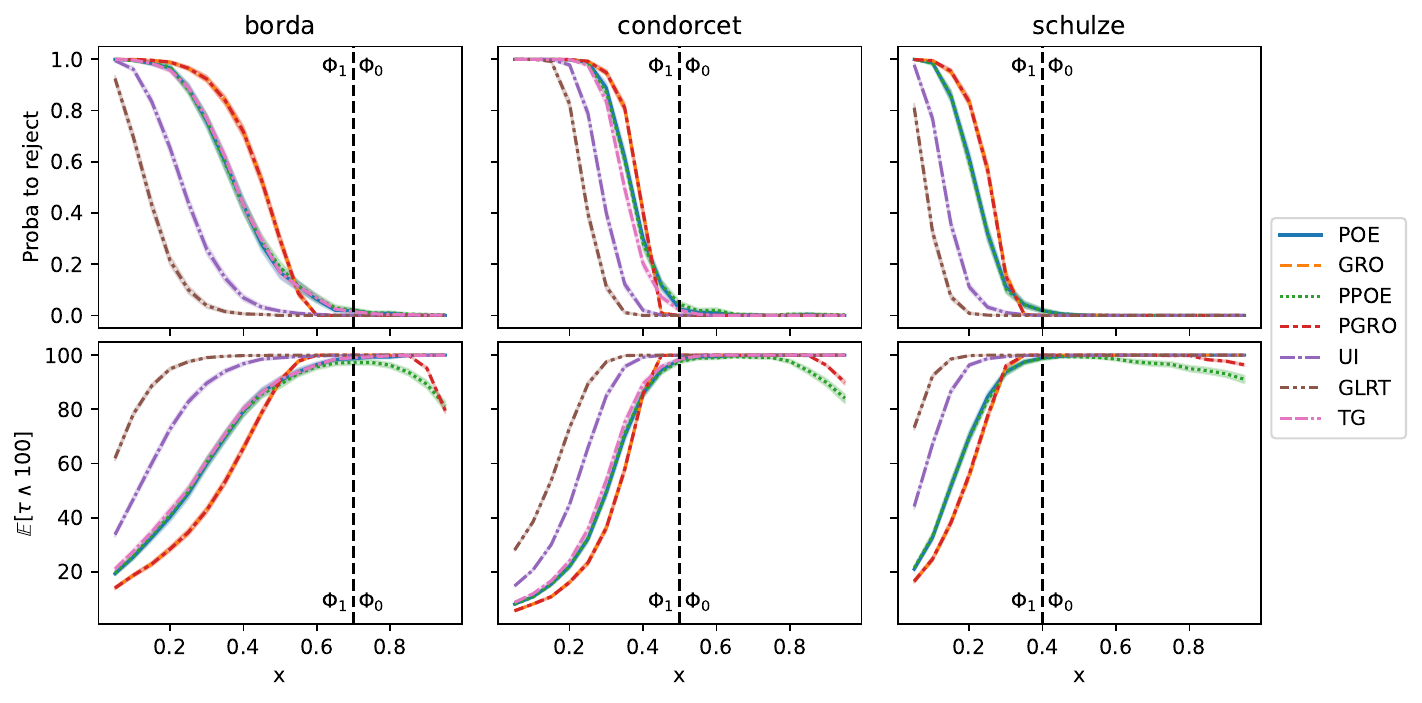}
	\end{center}
	\caption{Plot of the probability of deciding ``Reject $\Hyp_0$'' and the expected sample size of various sequential tests for data generated from $\Ber(x) \otimes \Ber(0.6) \otimes \Ber(0.1)$. $\Hyp_0$ corresponds to candidate $1$ being a winner in the associated voting system, depending on the column. The vertical separation line represents the separation between $\Hyp_0$ and $\Hyp_1$. 1000 repetitions.\label{fig:power_sample_complexity}}
\end{figure}

In the following, recall that a (here multivariate) Jeffrey prior is a product of Beta distributions with parameters $\left(\frac 1 2 , \frac 1 2\right)$.

We study the following sequential tests:
\begin{description}[itemsep=0.1pt,parsep=0.1pt,topsep=0.1pt]
	\item[POE:] our proposed Posterior Optimal E-Variable from \Cref{def:POE} with a Jeffrey prior restricted to $\Param_1$.
	\item[PPOE:] our proposed Parallel Posterior Optimal E-Variable from \Cref{sect:PPOE} with a Jeffrey prior restricted on their respective alternative hypothesis parameter set.
	\item[TG:] the Turner-Grunwald e-variable from inspired by \cite{turnerExactAnytimevalidConfidence2022}. It is close to our Posterior Optimal E-Variable, but with a Jeffrey prior on the whole hypercube $[0,1]^D$, which simplifies the computation and does not need our Frank-Wolfe algorithm for convex $\Param_0$.
	\item[GLRT:] the Generalized Likelihood Ratio test with the threshold from \cite{kaufmannMixtureMartingalesRevisited2021}.
	\item[GRO:] the Growth Rate Optimal e-variable from \cite{grunwaldSafeTesting2023}, with parameter the parameter of the real data generating distribution. GRO is then an oracle e-variable that depends on parameters that are not known to the learner. 
	\item[PGRO:] the parallel GRO e-variable from \Cref{sect:PPOE}, using the fact that it is equivalent to PPOE with a Dirac prior so we can use the Frank-Wolfe algorithm to compute it when the constraint set is not convex.
	\item[UI:] the Universal Inference e-variable from \cite{wassermanUniversalInference2020}, which can be seen as a sequential relaxation of GLRT to make it an e-variable. UI is known to be conservative, and strictly worse than TG in general because it computes the MLE on $\Hyp_0$ on the denominator, which is conservative compared to the mixture used in GRO and POE.
\end{description}

In Figure~\ref{fig:power_sample_complexity}, we see that, under $\Hyp_1$, GRO is most of the time better than the alternatives in terms of both power and sample size, except when close to the frontier of $H_1$ where the GRO e-variable is known to be $1$ (at the frontier, the GRO e-variable should be $1$ with probability $1$ because the $(x, 0.6,0.1)$ belong to $\Param_0$ and its projection is then trivial). We see that TG, POE and PPOE all give more or less the same results in $\Hyp_1$. The advantage of the parallel testing, which is possible only if we can project onto non-convex spaces, is clearly visible in the redescending sample complexity obtained in $\Hyp_0$ as we get further away from the border.

Note that GRO seems to have a brutal angle a bit before the frontier, with the probability of rejection suddenly hitting 0. This is due to the sample complexity being capped. Since GRO has less stochasticity than the other testing methods (it does not need to estimate the posterior distributions or the real parameter, since it is an oracle), it is way more sensitive to this cap. Other stochastic methods might have, for a same parameter $x$, more variety in the stopping time, yielding to a softer curve to 0 (for the rejection probability) or to 100 (for the sample complexity).

\subsubsection{Growth Rates of Sequential Tests for Testing Condorcet, Borda and Schulze Winner}
In this section, we plot the Growth Rate and the Posterior Growth Rate from \Cref{eq:gn}, which are the optimality criteria for respectively GRO and POE. We know that GRO should have the highest Growth rate, and POE the largest Posterior Growth rate.

\begin{figure}[h]
	\begin{center}
		\includegraphics[width=\textwidth]{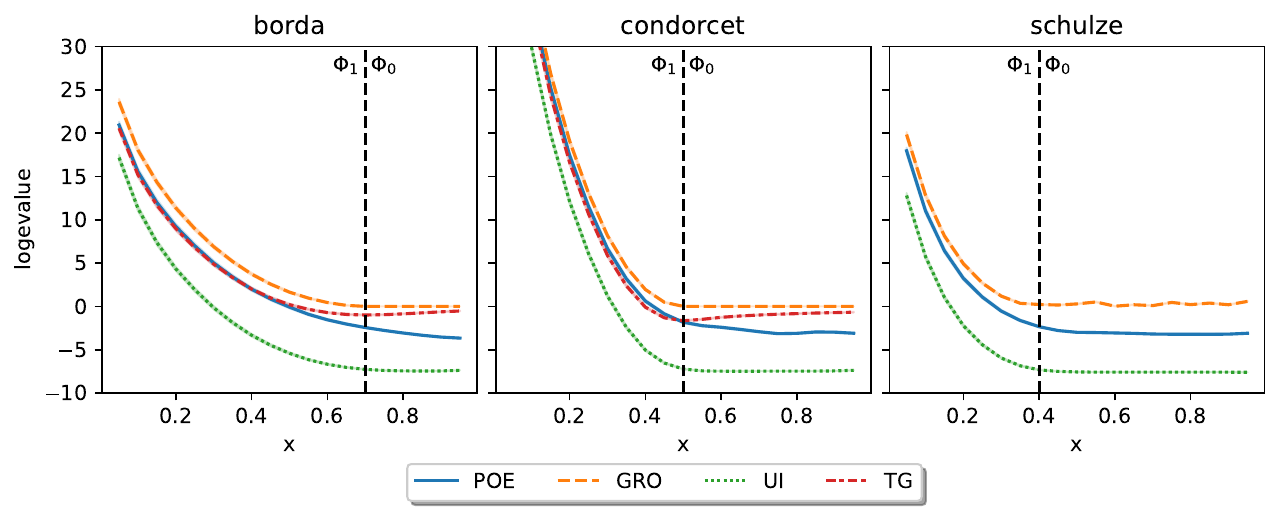}
	\end{center}
	\caption{Plot of estimated $\E_{\delta_\param}[\log(E)]$, with $\param=(x,0.6,0.1)$.
		\label{fig:plot_elog}}
\end{figure}

In Figure~\ref{fig:plot_elog} we can see that as expected GRO has the largest growth rate. Remark also that, by definition, POE has no incentive in optimising its growth rate for $Q \in \Hyp_0$, since it optimises the growth rate over its prior taken over $\Param_1$. This justifies its growth rate going down in $\Hyp_0$. However, the prior used by TG puts some weight in $\Param_0$, which could explain why its growth rate picks up in $\Hyp_0$.

As a sanity check, we also look at the Posterior Growth Rate $G_T$ and we compare POE with TG. See Figure~\ref{fig:plot_epilog} where we plot confidence intervals for $G_T(E,\pi,T)$ for $T = 100$ over 500 000 repetitions. Although POE has a higher $G_T(E,\pi,T)$, the value of  $G_T(E,\pi,T)$ for TG stays pretty close. 

\begin{figure}[h]
	\begin{center}
		\includegraphics[width=0.5\textwidth]{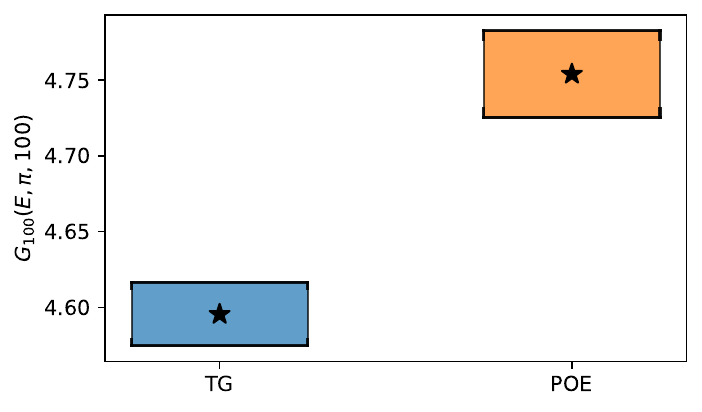}
	\end{center}
	\caption{Plot of confidence 95\% intervals for $G_{100}(E,\pi,100)$ using bootstrap confidence interval (500 000 repetitions) in Borda voting system with 3 candidates, with $\pi$ the Jeffrey prior restricted to $\Param_1$. \label{fig:plot_epilog}}
\end{figure}
\subsubsection{Effect of Prior Knowledge}
In this section, we consider the effect of having prior knowledge before doing the test; for example, if we have access to 10 or 50 data points collected beforehand from the sampling distribution. In this case, we can ``pre-train'' $\pi$ and use POE with $\pi$ the posterior distribution after having seen these data points.
\begin{figure}[!ht]
	\begin{center}
		\includegraphics[width=\textwidth]{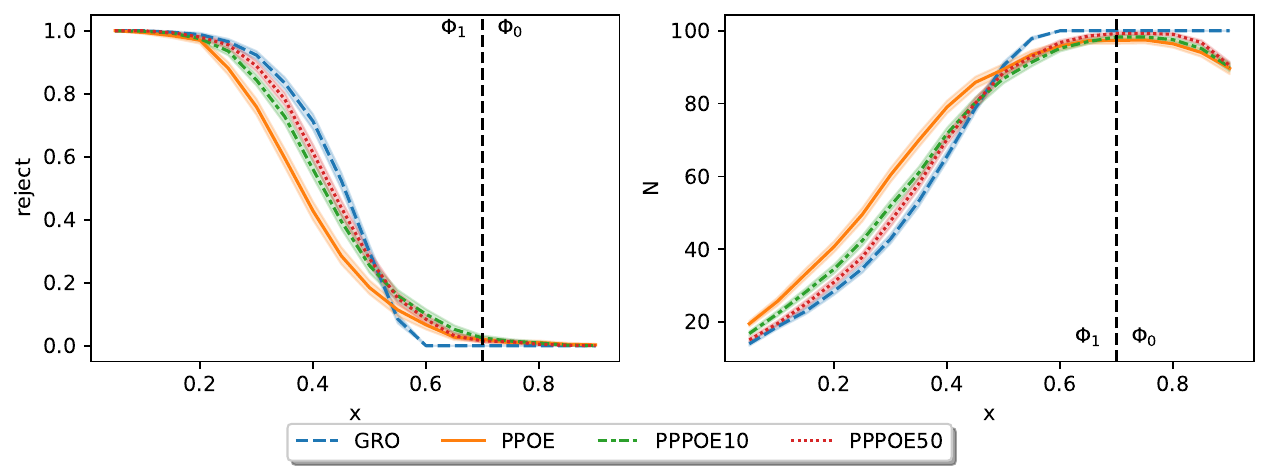}
	\end{center}
	\caption{Plot of the rejection probability (on the left) and the average stopping time (right) for GRO and for POE with 0, 10 and 50 prior data points in Borda voting system with 3 candidates.\label{fig:prior_data}}
\end{figure}
We plot in Figure~\ref{fig:prior_data} the rejection rate and the average stopping time of GRO and POE with more and more prior data. We see that the more prior data is injected into $\pi$, the closer POE is to GRO. This is pretty intuitive, because $\pi$ will concentrate more and more around the real sampling distribution parameter while GRO corresponds to POE with oracle prior $\pi$ set to a Dirac in the sampling distribution parameter.

\subsubsection{Effect of dimension on POE}\label{sec:xp_dim}

The computational time of TG is a lot smaller than POE due to the complexity of Frank Wolfe updates in particular, when $D$ becomes big. We investigate here if it is still worth it to use POE when $D$ grows. In Figure~\ref{fig:bar_dim}, we represent the rejection rate and average stopping time for POE and TG as the number of candidates $N$ grows (remember that $D=N(N-1)/2$).

\begin{figure}[h]
	\begin{center}
		\includegraphics[width=0.65\textwidth]{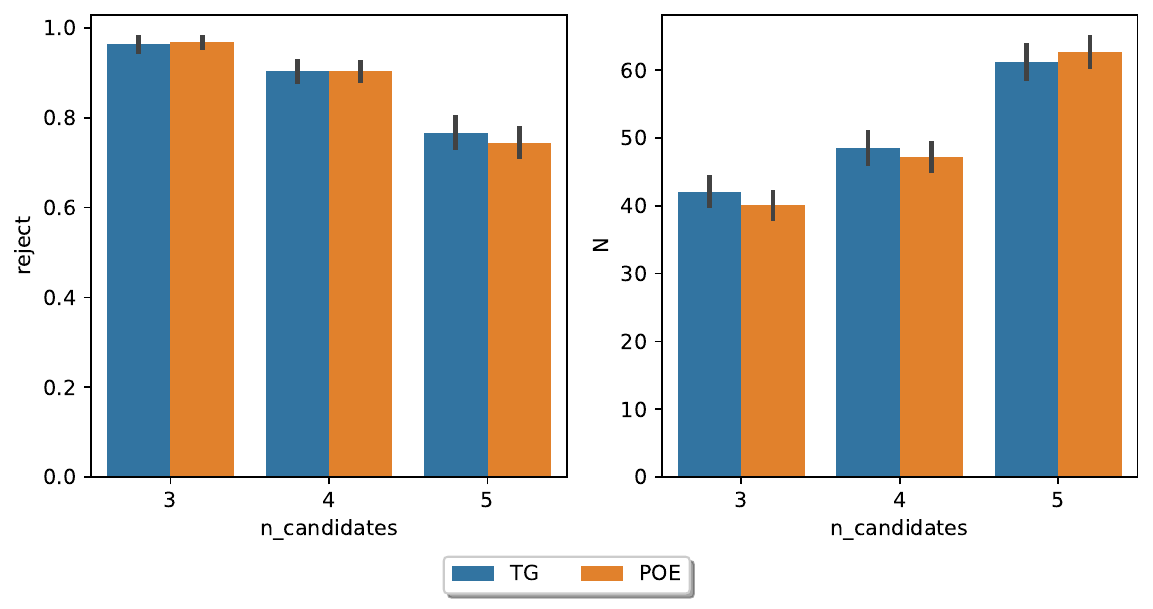}
	\end{center}
	\caption{Plot of the rejection probability (on the left) and the average stopping time (right) for increasing number of candidates in Borda voting system with parameter $(0.2, 0.6,0.6,\dots,0.6,0.1)$.\label{fig:bar_dim}}
\end{figure}
We see that TG and POE have more or less the same practical performances, which seems to confirm what was already witnessed in Figure~\ref{fig:power_sample_complexity}. This is in particular true because the size of $\Param_0$ gets smaller and smaller compared to $\Param$ as $D$ grows. 
Hence, if one is only interested in testing $\Hyp_0$ with a convex parameter set, TG seems to be a good choice, while $POE$ is needed as soon as $\Hyp_0$ is non-convex. In addition, even if in our setting $\Param_0$ is always small compared to $\Param \setminus \Param_0$, an open question is whether POE would empirically perform better than TG in problems where $\Param_0$ is big.

\subsection{``Voter autrement''}\label{sect:voterautrement}

In this section, we use the dataset and study from~\cite{delemazure_2024_10998451} to simulate a poll conducted according to votes cast in 2022 in French Presidential Election. This can be seen as an attempt to answer the questions: who would have been the winner if the Borda voting system had been used instead of the uninominal French voting system? Moreover, we are also interested in how many samples have to be collected (i.e. how many people have to be surveyed) to answer this question.

Taking inspiration from parallel GLRT from~\cite{kaufmannContributionsOptimalSolution2020}, we test simultaneously for each candidate $i$ the hypothesis $\Hyp_{0,i}$: "$i$ is a Borda winner" against $\Hyp_{1,i}$: "$i$ is not a Borda winner". Because in this setting, the parameter space $\Param_0$ is convex for each test, we use the TG estimator as its performances are similar to the ones of POE without the heavy computational cost (see Section~\ref{sec:xp_dim}).

The dataset from~\cite{delemazure_2024_10998451} provides pairwise votes coming from 2287 French voters, collected before and after the first round of the presidential elections. The responders also provided the ballot they actually cast in the official elections, and we use this actual ballot to reweight the sampling in order to match the official electoral results, in the same way that the original survey did. 
At each timestep, for each pair of candidates, we sample a duel between those two. For each candidate, we try to reject the hypothesis that they are a Borda winner, with false positive rate 5\%. Based on that, we plot in Figure~\ref{fig:evalues_voterautrement} the e-value corresponding to each candidate, depending on the number of iterations. When two candidates $i$ and $j$ are eliminated, it is no longer necessary to sample the preference $i$ versus $j$, thus reducing the number of samples per iteration. In the end, only one candidate, Yannick Jadot, remains; the dataset confirms that he is the theoretical Borda winner. Interestingly, his e-value remains equal to $1$ because the empirical distribution always lies within the hypothesis set corresponding to Jadot being a Borda winner. Consequently, no evidence accumulates against that hypothesis.

\begin{figure}[h]
	\begin{center}
		\includegraphics[width=\textwidth]{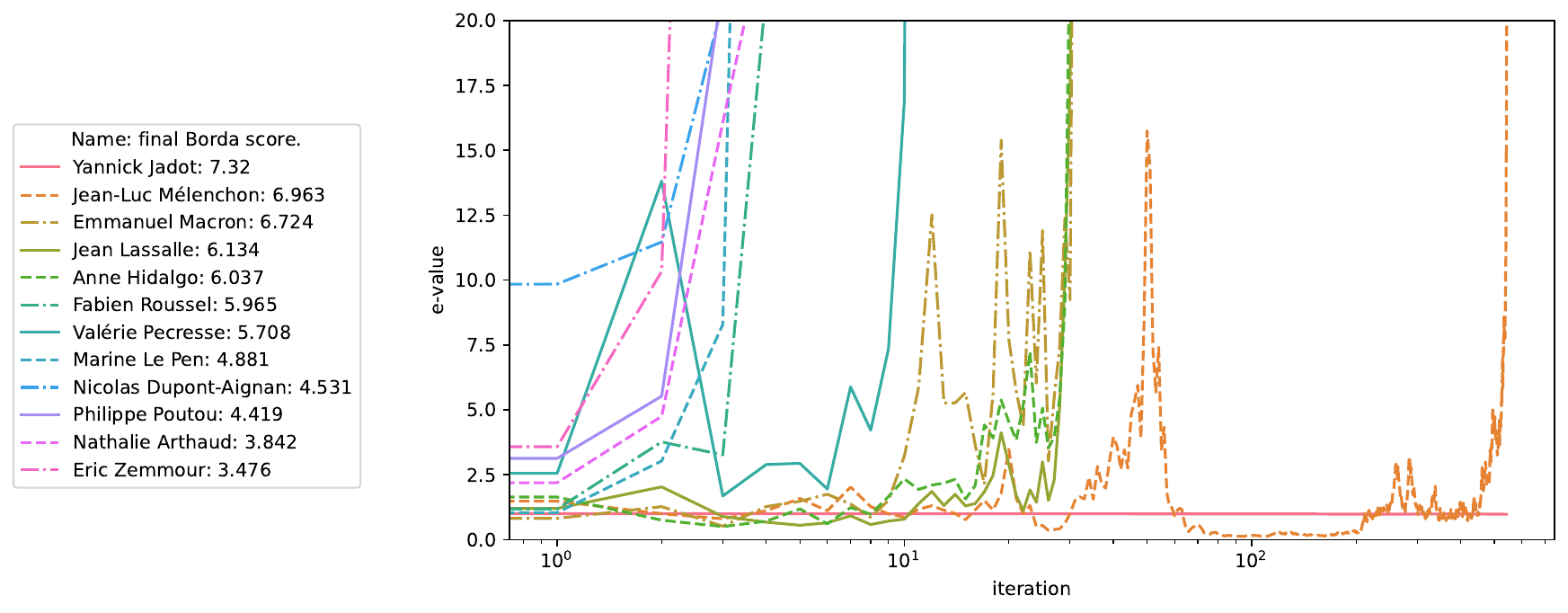}
	\end{center}
	\caption{Evolution of the sequential e-values for the candidates in the 2022 French presidential election dataset as the number of sampled pairwise comparisons increases. Candidates are progressively eliminated until only one, Jadot, remains.\label{fig:evalues_voterautrement}}
\end{figure}

\begin{figure}[h]
	\begin{center}
		\includegraphics[width=0.6\textwidth]{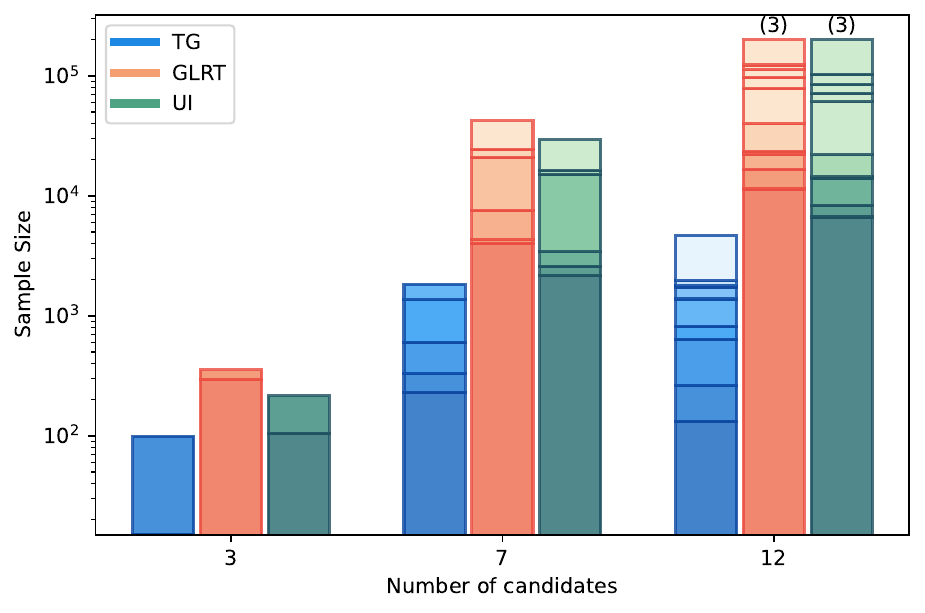}
	\end{center}
	\caption{Comparison of the sample complexity required to identify a single winner using three sequential testing methods; the generalised likelihood ratio test (GLRT), and two e-value-based tests: our proposition (TG) and universal inference (UI), for different numbers of candidates.\label{fig:samplesize_voterautrement}}
\end{figure}

In \Cref{fig:samplesize_voterautrement}, we plot the number of samples required to retain only one potential winner, comparing TG to two baselines: Universal Inference (see \cite{wassermanUniversalInference2020} Section 7), and the GLRT test (from \cite{kaufmannContributionsOptimalSolution2020} Section 3.2). We stop after collecting $2\cdot10^5$ samples, and -- if the test hasn't yet yielded a single winner -- we give the number of candidates still not rejected by the end of sample collection. Note that, while the number of samples seems to increase with the number of candidates, it is not always that straightforward: adding a very dominant candidate to a pool of equivalent candidates will make the test easier. Still, in those simulations, the e-value based test manages to conclude quicker than our two baselines, especially so as the instance gets more costly. For reference, official political surveys (IPSOS) conducted prior to the 2022 election used a total of $7,321$ voters, in contrast TG uses $171$ voters for a sample size of $4,721$ pairwise comparisons.

	\section{Conclusion and future work}

We have exhibited a new criterion for e-value optimality, posterior optimality, and have derived properties for it in \Cref{th:bayesian}. 
We have then given a new algorithm based on Frank Wolfe to compute the reverse information projection in our multivariate Bernoulli setting. Unlike the state of the art which can only deal with convex $\Param_0$, our approach even works in settings with non-convex parameter sets. Then, we have studied the guarantees of the oracle GRO e-value and of ours in the fixed-budget and in a fixed-confidence setting with parallel tests. Finally, we have conducted experiments in our preference-voting poll setting. We have studied and compared multiple testing procedures, their stopping time, their reject probability, on synthetic examples for three different winner definitions, and we have also implemented a test on real-life data, highlighting the feasibility of the test in real-life situations, even in high dimension.

Some questions remain open, however. 
First of all, the Frank Wolfe algorithm for RIPr is probably viable for a variety of settings, and not just our multivariate Bernoulli one. In which circumstances exactly is it a correct algorithm? What can we say about its convergence rate? Is the artificial $\varepsilon$ slack that we added a necessity, or can it be foregone somehow? 
Moreover, we wonder if it would be possible to find better ways to test for the Schulze winner; current approaches fail due to the number of convex components exploding with the dimension (as a reminder, projecting onto the subset of $\Param$ where candidate $i$ is a Schulze winner requires projecting onto $(D-1)!\times 2^D$ convex components). Efficient methods to tackle parameter sets with a high number of convex components would be a useful addition, and would allow for many more practical applications. 
Finally, we have seen that TG and POE have the same practical performance in settings where both are applicable. It remains an open question in which settings this resemblance is true: would a bigger $\Hyp_0$ cause POE to perform better, for example? When is the gap between the two appreciable? More work in the differences between TG and POE in practice is required.

	\section*{Acknowledgements}
	The authors would like to thank Rémy Degenne, Emilie Kaufmann and Peter Grünwald for their advice and discussions that helped us make this article what it is now.

        The authors would also like to acknowledge the Scool team for its great working environment as well as the ANR projects STRESS (ANR-25-CE23-6825) and FATE (ANR22-CE23-0016-01). 
	
	\bibliographystyle{apalike}
	\bibliography{bibli.bib}

@article{bubeckConvexOptimizationAlgorithms2015,
	title = {Convex {{Optimization}}: {{Algorithms}} and {{Complexity}}},
	shorttitle = {Convex {{Optimization}}},
	author = {Bubeck, S{\'e}bastien},
	year = 2015,
	month = nov,
	journal = {Foundations and Trends\textregistered{} in Machine Learning},
	volume = {8},
	number = {3-4},
	pages = {231--357},
	issn = {1935-8237},
	doi = {10.1561/2200000050},
	urldate = {2026-06-02}
}

@article{frankAlgorithmQuadraticProgramming1956,
	title = {An Algorithm for Quadratic Programming},
	author = {Frank, Marguerite and Wolfe, Philip},
	year = 1956,
	journal = {Naval Research Logistics Quarterly},
	volume = {3},
	number = {1-2},
	pages = {95--110},
	issn = {1931-9193},
	doi = {10.1002/nav.3800030109},
	urldate = {2026-06-02},
	copyright = {Copyright \copyright{} 1956 Wiley Periodicals, Inc., A Wiley Company}
}

@misc{agrawalStoppingTimesPowerone2025,
  title = {On {{Stopping Times}} of {{Power-one Sequential Tests}}: {{Tight Lower}} and {{Upper Bounds}}},
  shorttitle = {On {{Stopping Times}} of {{Power-one Sequential Tests}}},
  author = {Agrawal, Shubhada and Ramdas, Aaditya},
  year = 2025,
  month = apr,
  number = {arXiv:2504.19952},
  eprint = {2504.19952},
  primaryclass = {math.ST},
  publisher = {arXiv},
  doi = {10.48550/arXiv.2504.19952},
  urldate = {2026-06-09},
  archiveprefix = {arXiv}
}

@article{koolenLogoptimalAnytimevalidEvalues2022,
  title = {Log-Optimal Anytime-Valid {{E-values}}},
  author = {Koolen, Wouter M. and Gr{\"u}nwald, Peter},
  year = 2022,
  month = feb,
  journal = {International Journal of Approximate Reasoning},
  series = {Probability and {{Statistics}}: {{Foundations}} and {{History}}. {{In}} Honor of {{Glenn Shafer}}},
  volume = {141},
  pages = {69--82},
  issn = {0888-613X},
  doi = {10.1016/j.ijar.2021.09.010},
  urldate = {2026-05-26}
}

@book{cover1999elements,
  title={Elements of information theory},
  author={Cover, Thomas M},
  year={1999},
  publisher={John Wiley \& Sons}
}

@book{robertBayesianChoiceDecisionTheoretic2007,
  title = {The {{Bayesian Choice}}: {{From Decision-Theoretic Foundations}} to {{Computational Implementation}}},
  shorttitle = {The {{Bayesian Choice}}},
  author = {Robert, Christian P.},
  year = 2007,
  publisher = {Springer Verlag, New York},
  address = {New York, NY},
  isbn = {978-0-387-71598-8},
  langid = {english}
}

@misc{grunwaldNeymanPearsonEvaluesEnable2024,
  title = {Beyond {{Neyman-Pearson}}: E-Values Enable Hypothesis Testing with a Data-Driven Alpha},
  shorttitle = {Beyond {{Neyman-Pearson}}},
  author = {Gr{\"u}nwald, Peter},
  year = 2024,
  month = apr,
  number = {arXiv:2205.00901},
  eprint = {2205.00901},
  primaryclass = {stat},
  publisher = {arXiv},
  doi = {10.48550/arXiv.2205.00901},
  urldate = {2025-05-28},
  archiveprefix = {arXiv}
}

@techreport{kaufmannContributionsOptimalSolution2020,
	type = {Habilitation \`a Diriger Des Recherches},
	title = {Contributions to the {{Optimal Solution}} of {{Several Bandit Problems}}},
	author = {Kaufmann, Emilie},
	year = 2020,
	institution = {Universit\'e de Lille},
	urldate = {2026-05-26}
}

@dataset{delemazure_2024_10998451,
  author       = {Delemazure, Théo and
                  Bouveret, Sylvain},
  title        = {Voter Autrement 2022 - The Online Experiment ("Un
                   Autre Vote'')
                  },
  month        = apr,
  year         = 2024,
  publisher    = {Zenodo},
  doi          = {10.5281/zenodo.10998451},
  url          = {https://doi.org/10.5281/zenodo.10998451},
}

@misc{spertusSweeterSUITESupermartingale2022,
  title = {Sweeter than {{SUITE}}: {{Supermartingale Stratified Union-Intersection Tests}} of {{Elections}}},
  shorttitle = {Sweeter than {{SUITE}}},
  author = {Spertus, Jacob V. and Stark, Philip B.},
  year = 2022,
  month = jul,
  number = {arXiv:2207.03379},
  eprint = {2207.03379},
  primaryclass = {stat},
  publisher = {arXiv},
  doi = {10.48550/arXiv.2207.03379},
  urldate = {2024-12-14},
  archiveprefix = {arXiv}
}

@misc{waudby-smithRiLACSRiskLimitingAudits2021,
  title = {{{RiLACS}}: {{Risk-Limiting Audits}} via {{Confidence Sequences}}},
  shorttitle = {{{RiLACS}}},
  author = {{Waudby-Smith}, Ian and Stark, Philip B. and Ramdas, Aaditya},
  year = 2021,
  month = nov,
  number = {arXiv:2107.11323},
  eprint = {2107.11323},
  primaryclass = {stat},
  publisher = {arXiv},
  doi = {10.48550/arXiv.2107.11323},
  urldate = {2024-12-14},
  archiveprefix = {arXiv}
}

@article{ramdasHypothesisTestingEvalues2025,
  title = {Hypothesis {{Testing}} with {{E-values}}},
  author = {Ramdas, Aaditya and Wang, Ruodu},
  year = 2025,
  month = jul,
  journal = {Foundations and Trends in Statistics},
  volume = {1},
  number = {1-2},
  pages = {1--390},
  issn = {2978-4212},
  doi = {10.1561/3600000002},
  urldate = {2026-06-23}
}

@article{turnerGenericEvariablesExact2024,
  title = {Generic {{E-variables}} for Exact Sequential K{$<$}math{$><$}mi Is="true"{$>$}k{$<$}/Mi{$><$}/Math{$>$}-Sample Tests That Allow for Optional Stopping},
  author = {Turner, Rosanne J. and Ly, Alexander and Gr{\"u}nwald, Peter D.},
  year = 2024,
  month = may,
  journal = {Journal of Statistical Planning and Inference},
  volume = {230},
  pages = {106116},
  issn = {0378-3758},
  doi = {10.1016/j.jspi.2023.106116},
  urldate = {2026-06-03}
}

@misc{makurMinimaxHypothesisTesting2024,
  title = {Minimax {{Hypothesis Testing}} for the {{Bradley-Terry-Luce Model}}},
  author = {Makur, Anuran and Singh, Japneet},
  year = 2024,
  month = oct,
  number = {arXiv:2410.08360},
  eprint = {2410.08360},
  publisher = {arXiv},
  doi = {10.48550/arXiv.2410.08360},
  urldate = {2024-10-14},
  archiveprefix = {arXiv}
}

@misc{haoEValuesExponentialFamilies2025,
  title = {E-{{Values}} for {{Exponential Families}}: The {{General Case}}},
  shorttitle = {E-{{Values}} for {{Exponential Families}}},
  author = {Hao, Yunda and Gr{\"u}nwald, Peter},
  year = 2025,
  month = aug,
  number = {arXiv:2409.11134},
  eprint = {2409.11134},
  primaryclass = {stat.ME},
  publisher = {arXiv},
  doi = {10.48550/arXiv.2409.11134},
  urldate = {2026-06-22},
  archiveprefix = {arXiv}
}

@misc{arnoldOptimalEvaluesTesting2026,
  title = {Optimal E-Values for Testing the Mean of a Bounded Random Variable against a Composite Alternative},
  author = {Arnold, Sebastian and Clerico, Eugenio},
  year = 2026,
  month = jan,
  number = {arXiv:2601.11347},
  eprint = {2601.11347},
  primaryclass = {math},
  publisher = {arXiv},
  doi = {10.48550/arXiv.2601.11347},
  urldate = {2026-01-19},
  archiveprefix = {arXiv}
}

@misc{grunwaldOptimalEValuesExponential2025,
  title = {Optimal {{E-Values}} for {{Exponential Families}}: The {{Simple Case}}},
  shorttitle = {Optimal {{E-Values}} for {{Exponential Families}}},
  author = {Gr{\"u}nwald, Peter and Lardy, Tyron and Hao, Yunda and {Bar-Lev}, Shaul K. and de Jong, Martijn},
  year = 2025,
  month = apr,
  number = {arXiv:2404.19465},
  eprint = {2404.19465},
  primaryclass = {stat.ME},
  publisher = {arXiv},
  doi = {10.48550/arXiv.2404.19465},
  urldate = {2026-06-10},
  archiveprefix = {arXiv}
}

@article{agrawalOptimalBestarmIdentification2021,
  title = {Optimal Best-Arm Identification Methods for Tail-Risk Measures},
  author = {Agrawal, Shubhada and Koolen, Wouter M and Juneja, Sandeep},
  year = {2021},
  journal = {Advances in Neural Information Processing Systems},
  volume = {34},
  pages = {25578--25590},
}

@article{lardy2024reverse,
  title={Reverse information projections and optimal e-statistics},
  author={Lardy, Tyron and Gr{\"u}nwald, Peter and Harremo{\"e}s, Peter},
  journal={IEEE Transactions on Information Theory},
  year={2024},
  publisher={IEEE}
}

@misc{bengsPreferencebasedOnlineLearning2021,
  title = {Preference-Based {{Online Learning}} with {{Dueling Bandits}}: {{A Survey}}},
  shorttitle = {Preference-Based {{Online Learning}} with {{Dueling Bandits}}},
  author = {Bengs, Viktor and {Busa-Fekete}, Robert and {Mesaoudi-Paul}, Adil El and H{\"u}llermeier, Eyke},
  year = {2021},
  month = jul,
  number = {arXiv:1807.11398},
  eprint = {1807.11398},
  primaryclass = {cs, stat},
  publisher = {arXiv},
  doi = {10.48550/arXiv.1807.11398},
  urldate = {2024-05-05},
  archiveprefix = {arxiv}
}

@inproceedings{chenCombinatorialPureExploration2020,
  title = {Combinatorial {{Pure Exploration}} of {{Dueling Bandit}}},
  booktitle = {37th {{International Conference}} on {{Machine Learning}}},
  author = {Chen, Wei and Du, Yihan and Huang, Longbo and Zhao, Haoyu},
  year = {2020},
  month = jun,
  eprint = {2006.12772},
  primaryclass = {cs, stat},
  publisher = {PMLR},
  doi = {10.48550/arXiv.2006.12772},
  urldate = {2024-06-06},
  archiveprefix = {arxiv}
}

@misc{grunwaldSafeTesting2023,
  title = {Safe {{Testing}}},
  author = {Gr{\"u}nwald, Peter and {de Heide}, Rianne and Koolen, Wouter},
  year = {2023},
  month = mar,
  number = {arXiv:1906.07801},
  eprint = {1906.07801},
  primaryclass = {cs, math, stat},
  publisher = {arXiv},
  doi = {10.48550/arXiv.1906.07801},
  urldate = {2024-05-23},
  archiveprefix = {arxiv}
}

@inproceedings{haddenhorstIdentificationGeneralizedCondorcet2021,
  title = {Identification of the {{Generalized Condorcet Winner}} in {{Multi-dueling Bandits}}},
  booktitle = {Advances in {{Neural Information Processing Systems}}},
  author = {Haddenhorst, Bj{\"o}rn and Bengs, Viktor and H{\"u}llermeier, Eyke},
  year = {2021},
  volume = {34},
  pages = {25904--25916},
  publisher = {Curran Associates, Inc.},
  urldate = {2024-05-05}
}

@article{haddenhorstTestificationCondorcetWinners2021,
  title = {Testification of {{Condorcet Winners}} in {{Dueling Bandits}}},
  author = {Haddenhorst, Bj{\"o}rn and Bengs, Viktor and Brandt, Jasmin and H{\"u}llermeier, Eyke},
  year = {2021}
}

@misc{kaufmannMixtureMartingalesRevisited2021,
  title = {Mixture {{Martingales Revisited}} with {{Applications}} to {{Sequential Tests}} and {{Confidence Intervals}}},
  author = {Kaufmann, Emilie and Koolen, Wouter},
  year = {2021},
  month = dec,
  number = {arXiv:1811.11419},
  eprint = {1811.11419},
  primaryclass = {cs, stat},
  publisher = {arXiv},
  doi = {10.48550/arXiv.1811.11419},
  urldate = {2024-06-10},
  archiveprefix = {arxiv}
}

@phdthesis{liEstimationMixtureModels1999,
  title = {Estimation of {{Mixture Models}}},
  author = {Li, Jonathan},
  year = {1999},
  school = {Yale University}
}

@article{ramdasGameTheoreticStatisticsSafe2023,
  title = {Game-{{Theoretic Statistics}} and {{Safe Anytime-Valid Inference}}},
  author = {Ramdas, Aaditya and Gr{\"u}nwald, Peter and Vovk, Vladimir and Shafer, Glenn},
  year = {2023},
  month = nov,
  journal = {Statistical Science},
  volume = {38},
  number = {4},
  pages = {576--601},
  publisher = {Institute of Mathematical Statistics},
  issn = {0883-4237, 2168-8745},
  doi = {10.1214/23-STS894},
  urldate = {2024-05-30}
}

@misc{rastogiTwoSampleTestingRanked2020,
  title = {Two-{{Sample Testing}} on {{Ranked Preference Data}} and the {{Role}} of {{Modeling Assumptions}}},
  author = {Rastogi, Charvi and Balakrishnan, Sivaraman and Shah, Nihar B. and Singh, Aarti},
  year = {2020},
  month = nov,
  number = {arXiv:2006.11909},
  eprint = {2006.11909},
  primaryclass = {cs, math, stat},
  publisher = {arXiv},
  doi = {10.48550/arXiv.2006.11909},
  urldate = {2024-05-05},
  archiveprefix = {arxiv}
}

@misc{sahaPACInstanceOptimalSample2020,
  title = {From {{PAC}} to {{Instance-Optimal Sample Complexity}} in the {{Plackett-Luce Model}}},
  author = {Saha, Aadirupa and Gopalan, Aditya},
  year = {2020},
  month = feb,
  number = {arXiv:1903.00558},
  eprint = {1903.00558},
  primaryclass = {cs, stat},
  publisher = {arXiv},
  urldate = {2024-05-08},
  archiveprefix = {arxiv}
}

@inproceedings{sahaVersatileDuelingBandits2022,
  title = {Versatile {{Dueling Bandits}}: {{Best-of-both World Analyses}} for {{Learning}} from {{Relative Preferences}}},
  shorttitle = {Versatile {{Dueling Bandits}}},
  booktitle = {Proceedings of the 39th {{International Conference}} on {{Machine Learning}}},
  author = {Saha, Aadirupa and Gaillard, Pierre},
  year = {2022},
  month = jun,
  pages = {19011--19026},
  publisher = {PMLR},
  issn = {2640-3498},
  urldate = {2024-05-11}
}

@article{taplinStatisticalAnalysisPreference1997,
  title = {The {{Statistical Analysis}} of {{Preference Data}}},
  author = {Taplin, Ross H.},
  year = {1997},
  journal = {Journal of the Royal Statistical Society. Series C (Applied Statistics)},
  volume = {46},
  number = {4},
  eprint = {2986359},
  eprinttype = {jstor},
  pages = {493--512},
  publisher = {[Wiley, Royal Statistical Society]},
  issn = {0035-9254},
  doi = {10.1111/1467-9876.00086},
  urldate = {2024-05-05}
}

@misc{turnerExactAnytimevalidConfidence2022,
  title = {Exact {{Anytime-valid Confidence Intervals}} for {{Contingency Tables}} and {{Beyond}}},
  author = {Turner, Rosanne and Gr{\"u}nwald, Peter},
  year = {2022},
  month = jun,
  number = {arXiv:2203.09785},
  eprint = {2203.09785},
  primaryclass = {stat},
  publisher = {arXiv},
  doi = {10.48550/arXiv.2203.09785},
  urldate = {2024-06-14},
  archiveprefix = {arxiv}
}

@article{wassermanUniversalInference2020,
  title = {Universal {{Inference}}},
  author = {Wasserman, Larry and Ramdas, Aaditya and Balakrishnan, Sivaraman},
  year = {2020},
  month = jul,
  journal = {Proceedings of the National Academy of Sciences},
  volume = {117},
  number = {29},
  eprint = {1912.11436},
  primaryclass = {math, stat},
  pages = {16880--16890},
  issn = {0027-8424, 1091-6490},
  doi = {10.1073/pnas.1922664117},
  urldate = {2024-06-01},
  archiveprefix = {arxiv}
}

@misc{xiaOptimalStatisticalHypothesis2020,
  title = {Optimal {{Statistical Hypothesis Testing}} for {{Social Choice}}},
  author = {Xia, Lirong},
  year = {2020},
  month = jun,
  number = {arXiv:2006.11362},
  eprint = {2006.11362},
  primaryclass = {cs, math, stat},
  publisher = {arXiv},
  doi = {10.48550/arXiv.2006.11362},
  urldate = {2024-05-05},
  archiveprefix = {arxiv}
}

@article{schulzeNewMonotonicCloneindependent2011,
  title = {A New Monotonic, Clone-Independent, Reversal Symmetric, and Condorcet-Consistent Single-Winner Election Method},
  author = {Schulze, Markus},
  year = 2011,
  month = feb,
  journal = {Social Choice and Welfare},
  volume = {36},
  number = {2},
  pages = {267--303},
  issn = {1432-217X},
  doi = {10.1007/s00355-010-0475-4},
  urldate = {2025-12-05}
}

@book{ghoshSequentialEstimation1997,
	title = {Sequential {{Estimation}}},
	author = {Ghosh, Malay and Mukhopadhyay, Nitis and Sen, Pranab Kumar},
	year = 1997,
	publisher = {Wiley-Interscience},
	address = {New York},
	isbn = {978-0-471-81271-5}
}
        \appendix
        \section{Instant Runoff cannot be represented by a preference matrix}\label{sec:instant_runoff}

A good option to satisfy a lot of voting criteria is the instant-runoff voting system, which is a rather simple procedure. It is close to the French voting system, and it is used as is to elect members of the Australian House of Representatives, among many other uses.

In instant-runoff, all voters give a ranking of candidates (let us assume that each voter gives a ranking of every single candidate). Then, if one candidate is the first choice of the majority of voters, it is elected; otherwise, the candidate that was the first choice of the smallest amount of people is eliminated, and the procedure is iterated.
\begin{example}
	In the results of \Cref{fig:voting}, 20\% of voters preferred candidate A to B, and candidate B to C, yielding the voting order ABC. In \Cref{fig:votingresround1}, we look at the proportion of voters that place each candidate in the first position: 45\% of voters rank A first. No candidate has the majority; therefore, we eliminate the candidate that had the fewest first position votes, C, and we conduct another round.
\begin{figure}[h]
  \begin{center}
	\subfloat[A vote distribution\label{fig:voting}]{
		\centering
		\begin{tikzpicture}[
			scale=0.7]
			\pie
			[style={thin},color={tikzcolor1!50,tikzcolor1!50,tikzcolor2!50,tikzcolor2!50,tikzcolor3!50,tikzcolor3!50}]
			{
				20/ABC,
				25/ACB,
				5/BAC,
				25/BCA,
				15/CAB,
				10/CBA
			}
		\end{tikzpicture}
		}
	\subfloat[The first round\label{fig:votingresround1}]{
		\centering
		\begin{tikzpicture}[scale=0.7]
			
			\begin{axis} [ybar,xmin=0,xmax=4,ymin=0,ymax=0.65,xtick={1,2,3},xticklabels={A,B,C},
				extra y ticks={0.5},             
				extra y tick style={%
					grid=major,                  
					grid style={dashed, red},    
				},
				bar width=20pt,
				bar shift=0pt
				]
				\addplot[tikzcolor1, fill=tikzcolor1!50] coordinates {(1,0.45)};
				\addplot[tikzcolor2, fill=tikzcolor2!50] coordinates {(2,0.3)};
				\addplot[tikzcolor3, fill=tikzcolor3!50] coordinates {(3,0.25)};
			\end{axis}
		\end{tikzpicture}
              }
        \end{center}
	\end{figure}
	
	After eliminating C, the votes placing it in first place are redistributed according to the candidate placed second. In \Cref{fig:voting2}, A gains the CAB votes, B gains the CBA votes. With that, A has 60\% of votes as seen in \Cref{fig:votingresround2}, and A is the winner.
	\begin{figure}[h]
            \begin{center}
	\subfloat[Distribution post eliminating C\label{fig:voting2}]{
		\centering
		\begin{tikzpicture}[
			scale=0.7]
			\pie
			[style={thin},
			color={tikzcolor1!50, tikzcolor1!50, tikzcolor2!50, tikzcolor2!50, tikzcolor1!50, tikzcolor2!50}]
			{
				20/AB-,
				25/A-B,
				5/BA-,
				25/B-A,
				15/-AB,
				10/-BA
			}
		\end{tikzpicture}
		
	}
	\subfloat[The second round\label{fig:votingresround2}]{
		\centering
		\begin{tikzpicture}[scale=0.7]
			
			\begin{axis} [ybar,xmin=0,xmax=3,ymin=0,ymax=0.65,xtick={1,2},xticklabels={A,B},
				extra y ticks={0.5},             
				extra y tick style={%
					grid=major,                  
					grid style={dashed, red},    
				},
				bar width=20pt,
				bar shift=0pt
				]
				\addplot[tikzcolor1, fill=tikzcolor1!50] coordinates {(1,0.6)};
				\addplot[tikzcolor2, fill=tikzcolor2!50] coordinates {(2,0.4)};
			\end{axis}
		\end{tikzpicture}
		}
                \end{center}
	\end{figure}
\end{example}

We now give a second example showing that instant-runoff is not determined by the preference matrix.

\begin{example}\label{ex:runoff}
	For the voting distribution of \Cref{fig:voting}, we compute $\pr{AB}=0.6$, $\pr{AC}=0.5$ and $\pr{BC}=0.5$. Consider instead \Cref{fig:voting3}.
	
	\begin{figure}
            \begin{center}
          \subfloat[A vote distribution\label{fig:voting3}]{
		\centering
		\begin{tikzpicture}[
			scale=0.65]
			\pie
			[style={thin},color={tikzcolor1!50,tikzcolor1!50,tikzcolor2!50,tikzcolor2!50,tikzcolor3!50,tikzcolor3!50}]
			{
				20/ABC,
				5/ACB,
				25/BAC,
				5/BCA,
				35/CAB,
				10/CBA
			}
		\end{tikzpicture}
              }		
	\subfloat[The first round\label{fig:votingresbis}]{
		\centering
		\begin{tikzpicture}[scale=0.7]
			
			\begin{axis} [ybar,xmin=0,xmax=4,ymin=0,ymax=0.65,xtick={1,2,3},xticklabels={A,B,C},
				extra y ticks={0.5},             
				extra y tick style={%
					grid=major,                  
					grid style={dashed, red},    
				},
				bar width=20pt,
				bar shift=0pt
				]
				\addplot[tikzcolor1, fill=tikzcolor1!50] coordinates {(1,0.25)};
				\addplot[tikzcolor2, fill=tikzcolor2!50] coordinates {(2,0.3)};
				\addplot[tikzcolor3, fill=tikzcolor3!50] coordinates {(3,0.45)};
			\end{axis}
		\end{tikzpicture}
              }		
              \end{center}
              \end{figure}
	We compute that, in this second voting distribution, $\pr{AB}=0.6$, $\pr{AC}=0.5$ and $\pr{BC}=0.5$, and therefore we have the same preference matrix $\PrM$. However, \Cref{fig:votingresbis} shows that candidate A gets eliminated in the first round, and therefore cannot be the winner.
\end{example}

\newpage
\section{Li's algorithm for Reverse Information Projection}\label{sec:Li}
We present here the common algorithm of computing RIPr initially defined in \cite{liEstimationMixtureModels1999}, explained in a more modern version in \cite[Algorithm 1]{lardy2024reverse}. We restate the algorithm here for completeness:

\begin{algorithm}[!ht]
	\caption{Greedy approximation of the Reverse Information Projection}
	\label{alg:RIPli}
		Let $Q_1=Q_{\param_1}$ where $\param_1 = \argmin_{\param\in\Param} \KL(P,Q_\param)$\\
		\For{$k=2,3,\dots$}{
			Let $\alpha_k = \frac 2{k+1}$ \\
			Compute $\param_k = \argmin_{\param\in\Param} \KL(P,(1-\alpha_k)Q_{k-1}+\alpha_k Q_\param)$ \label{line:minKL} \\
			Let $Q_k = (1-\alpha_k)Q_{k-1} +\alpha_k Q_{\param_k}$
		}
\end{algorithm}

This algorithm converges towards a distribution $Q$ that minimises $\KL(P,Q)$. 
The hard part here is Line~\ref{line:minKL}. In high dimension, or in non convex spaces, minimising this quantity can be tricky - and, in our case, it is tricky.

        \section{Proof of Lemma \ref{lem:pointpoint}}\label{sec:proof_pointpoint}
	Let $\phi^*$ be such that 
	$$\KL(P_\psi^{\otimes T}, P_{\phi^*}^{\otimes T}) = \min_{\phi \in {\Param'}} \KL(P_\psi^{\otimes T}, P_\phi^{\otimes T}) $$
	which exists because 
	$$ \phi \mapsto \KL(P_\psi^{\otimes T}, P_\phi^{\otimes T}) = T \sum_{d=1}^D \kl\left(\psi_d, \phi_d\right)$$
	is a continuous function on $(0,1)^d$, coercive near the boundaries of $[0,1]^d$ and ${\Param'}$ is closed. Because $\phi^*$ is a minimum and ${\Param'}$ is convex, we have for any $\phi' \in {\Param'}$,
	$$\left(\nabla_\phi\sum_{d=1}^D \kl\left(\psi_d,\phi_d^*\right)\right)^T(\phi'-\phi^*) \ge 0.$$
	By direct computation, we have
	\begin{align*}
		\frac{\d}{\d \phi_d}\kl\left(\psi_d, \phi_d'\right) = \frac{\d}{\d \phi_d} \left(\psi_d \log \frac{\psi_d}{\phi_d} + (1-\psi_d)\log\frac{1-\psi_d}{1-\phi_d}\right)= -\frac{\psi_d }{\phi_d} + \frac{1-\psi_d}{1-\phi_d}
	\end{align*}
	Hence, 
	\begin{align*}
		\left(\nabla_\phi\sum_{d=1}^D \kl\left(\psi_d,\phi_d^*\right)\right)^T(\phi'-\phi^*) &= \sum_{d=1}^D (\phi_d'-\phi^*_d)\left(-\frac{\psi_d }{\phi_d^*} + \frac{1-\psi_d}{1-\phi_d^*}\right) \ge 0,
	\end{align*} 
	which we can rewrite
	\begin{align}\label{eq:evalue_gro_proof_deriv}
		0&\le \sum_{d=1}^D (\phi_d^*-\phi_d') \left(\frac{\psi_d}{\phi_d^*} - \frac{1-\psi_d}{1-\phi_d^*}\right)\nonumber \\
		&=\sum_{d=1}^D \phi_d^* \frac{\psi_d}{\phi_d^*}+ (1-\phi_d^*) \frac{1-\psi_d}{1-\phi_d^*}-\sum_{d=1}^D \phi_d' \frac{\psi_d}{\phi_d^*}+ (1-\phi_d') \frac{1-\psi_d}{1-\phi_d^*}\nonumber\\
		&=D -\sum_{d=1}^D \phi_d' \frac{\psi_d}{\phi_d^*}+ (1-\phi_d') \frac{1-\psi_d}{1-\phi_d^*}
	\end{align}
	Now, let us consider the random variable $E= \frac{\d P_\psi^{\otimes T}}{\d P_{\phi^*}^{\otimes T}}$. We want to prove that this is an e-variable.
	We have for any $\phi' \in {\Param'}$, 
	\begin{align*}
		\E_{P_{\phi'}^{\otimes T}}\left[E\right] &= \E_{P_{\phi'}^{\otimes T}}\left[\prod_{t=1}^T \prod_{d=1}^D \frac{\psi_d^{X_d^t}(1-\psi_d)^{1-X_d^t}}{(\phi_d^*)^{X_d^t}(1-\phi_d^*)^{1-X_d^t}}\right]\\
		&= \prod_{d=1}^D \left(\phi_d'\frac{\psi_d}{\phi_d^*} + (1- \phi_d')\frac{1-\psi_d}{1-\phi_d^*}\right)^T \\
		&= \exp\left(T\sum_{d=1}^D \log\left(\phi_d'\frac{\psi_d}{\phi_d^*} + (1- \phi_d')\frac{1-\psi_d}{1-\phi_d^*}\right)\right)
	\end{align*}
	By concavity of the logarithm function, we have
	$$\frac{1}{D}\sum_{d=1}^D \log\left(\phi_d'\frac{\psi_d}{\phi_d^*} + (1- \phi_d')\frac{1-\psi_d}{1-\phi_d^*}\right) \le \log\left(\frac{1}{D}\sum_{d=1}^D\left(\phi_d'\frac{\psi_d}{\phi_d^*} + (1- \phi_d')\frac{1-\psi_d}{1-\phi_d^*}\right)\right) $$
	and then, by Equation~\eqref{eq:evalue_gro_proof_deriv}, we get
	$$\frac{1}{D}\sum_{d=1}^D \log\left(\phi_d'\frac{\psi_d}{\phi_d^*} + (1- \phi_d')\frac{1-\psi_d}{1-\phi_d^*}\right) \le \log(1) = 0 .$$
	Hence, 
	$$\E_{P_{\phi'}^{\otimes T}}\left[E\right] \le 1 .$$
	Hence, $E$ is an e-variable. Because it is of the simple form of a likelihood ratio, from \Cref{th:gro} we know by unicity of the GRO as the only e-variable of the form of a likelihood ratio, that $E$ is the GRO e-variable. 
	
        \section{Proof of Theorem \ref{th:bayesian}}\label{sec:proof_bayesian}

First of all, let us make sense out of $W_{1,t}$. It is in fact the posterior distribution for the parameter at time $t$:
\begin{lemma}\label{lem:w1post}
	For any real function $f$ and integer $t$,
	\begin{align*}
		&\E_{\param\sim \pi}\left[ \E_{X^{:t} \sim P_\param^{\otimes t}} \left[ f(X^1,\dots,X^t)\right] \right] \\
		&\hspace{2em} =  \E_{\forall t'\leq t-1, X^{t'} \sim P_{W_{1,t'}}} \left[ \E_{X^t \sim P_{W_{1,t}}} \left[ f(X^1,\dots,X^t)\mid X^{:t-1}\right] \right] \\
		&\hspace{2em} =  \E_{\forall t'\leq t, X^{t'} \sim P_{W_{1,t'}}} \left[ f(X^1,\dots,X^t)\right] 
	\end{align*}
\end{lemma}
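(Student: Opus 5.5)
The plan is to prove Lemma~\ref{lem:w1post} by induction on $t$, peeling off the last observation through the usual disintegration of the Bayesian joint law. Write $m_t(x^{:t}) := \int \prod_{i=1}^{t} P_\param(x_i)\,\d\prior(\param)$ for the marginal likelihood, with $m_0 = 1$. By the definition of $W_{1,t}$ in \Cref{th:bayesian}, which uses the first $t-1$ observations, the predictive density is
\[ P_{W_{1,t}}(x_t) = \int P_\param(x_t)\,\d W_{1,t}(\param) = \frac{m_t(x^{:t})}{m_{t-1}(x^{:t-1})}. \]
This identity is the only computation needed. Every sum is over the finite set $\{0,1\}^D$, or is a Fubini exchange of nonnegative integrands for general $f$ after splitting $f = f^+ - f^-$, so no measurability issues arise.

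\textbf{Step 1.} I will rewrite the left-hand side. Using Fubini,
\[ \E_{\param\sim\prior}\bigl[\E_{X^{:t}\sim P_\param^{\otimes t}}[f]\bigr] = \sum_{x^{:t}} f(x^{:t}) \int \prod_{i\le t} P_\param(x_i)\,\d\prior(\param) = \sum_{x^{:t}} f(x^{:t})\, m_t(x^{:t}). \]

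\textbf{Step 2.} I will telescope. Since
\[ m_t(x^{:t}) = \prod_{i=1}^{t} \frac{m_i(x^{:i})}{m_{i-1}(x^{:i-1})} = \prod_{i=1}^t P_{W_{1,i}}(x_i), \]
where each $W_{1,i}$ depends on $x^{:i-1}$, the previous sum equals
\[ \sum_{x^{:t}} f(x^{:t}) \prod_{i=1}^t P_{W_{1,i}}(x_i). \]
This is exactly the expectation under the sequential sampling scheme in which $X^{t'}\sim P_{W_{1,t'}}$ given the past, and so it is the third expression of the lemma.

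\textbf{Step 3.} The middle expression follows by splitting off the last factor:
\[ \sum_{x^{:t-1}} \prod_{i\le t-1} P_{W_{1,i}}(x_i) \left[\sum_{x_t} f(x^{:t})\, P_{W_{1,t}}(x_t)\right]. \]
The bracket is the conditional expectation $\E_{X^t\sim P_{W_{1,t}}}[f \mid X^{:t-1}]$. Equivalently, one can run an induction on $t$ applied to the function $g(x^{:t-1}) = \E_{X^t\sim P_{W_{1,t}}}[f\mid x^{:t-1}]$, which turns the step into the identity
\[ \int \prod_{i<t} P_\param(x_i)\, P_\param(x_t)\,\d\prior = m_{t-1}\,P_{W_{1,t}}(x_t). \]

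The main obstacle is bookkeeping rather than anything deep. The indexing convention has to be kept straight: $W_{1,t}$ conditions on $x^{:t-1}$, not $x^{:t}$, so $W_{1,1} = \prior$. The denominators $m_{t-1}$ must also be positive, which holds under the finite-KL assumption of \Cref{th:bayesian}, since that forces all parameters into the interior and hence all Bernoulli likelihoods to be positive. I would state these two points explicitly before carrying out Steps 1 to 3.
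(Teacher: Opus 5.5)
Your proposal is correct and follows the paper's own proof. The paper likewise telescopes the predictive densities $P_{W_{1,t'}}(x_{t'}) = m_{t'}(x^{:t'})/m_{t'-1}(x^{:t'-1})$ into the marginal likelihood $m_t(x^{:t})$, then exchanges the prior integral with the sum over $x^{:t}$, and leaves the middle expression (your Step 3) implicit.

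One small correction to a side remark: the finite-KL assumption of \Cref{th:bayesian} does not force parameters into the interior; it only forces all $P_\param$ to share a common support. Positivity of $m_{t-1}$ is not needed anyway: if $m_j$ is the first marginal that vanishes, then $P_{W_{1,j}}(x_j)=0$, so the telescoped product vanishes whatever convention is used for the later undefined factors.
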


\begin{proof}
	\begin{align*}
		\prod_{t'=1}^t P_{W_{1,t'}}(X^t \mid X^{:t'-1})
		&= \prod_{t'=1}^t \frac{\int \prod_{i=1}^{t'} P_\param(X^i) \prior(\param) \d\param }{\int \prod_{i=1}^{t'-1} P_\param(X^i) \prior(\param) \d\param } \\
		&= 	\int \prod_{i=1}^t P_\param(X^i) \prior(\param) \d \param ,
	\end{align*}
	so that
	\begin{align*}
		\E_{\param\sim \pi}\left[ \E_{X^{:t} \sim P_\param^{\otimes t}} \left[ f(X^1,\dots,X^n)\right] \right] 
		& = \int \prior(\param) \prod_{t'=1}^t P_\param (X^i) f(X^{:t}) \d \param \d X^{:t} \\
		& = \int \prod_{t'=1}^t P_{W_{1,t'}}(X^t) f(X^{:t}) \d X^{:t} ,
	\end{align*} and we have the result.
\end{proof}

\begin{proof}[Proof of \Cref{th:bayesian}]
First, let us check that our candidate $(E_t)$ is a test supermartingale. 
By definition of $W_{0,t}$, we know that for any $W\in \Prior_0$, {\small\[ \KL \left(\int P_{\param}(X^t) \d W_{1,t}(\param),\int P_{\param}(X^t) \d W_{0,t}(\param)\right) \leq \KL \left(\int P_{\param}(X^t) \d W_{1,t}(\param),\int P_{\param}(X^t) \d W(\param)\right)\] } or, in other words, for any $W\in\Prior_0$ and for $\lambda=0$,
\[
\frac{\d}{\d \lambda} \left( \KL \left(\int P_{\param}(X^t) \d W_{1,t}(\param),\int P_{\param}(X^t) \d ((1-\lambda)W_{0,t}+\lambda W)(\param)\right)\right) \geq 0
\]
Recalling that $\KL(P,Q) = \E_P\left( \log \frac{P}{Q}\right)$ and deriving gives us, for any $W\in \Prior_0$,
\begin{align*} 1 & \geq \E_{X^t \sim P_{W_{1,t}}}\left[\frac{\int P_{\param}(X^t) \d W(\param)}{\int P_{\param}(X^t) \d W_{0,t}(\param)}\right] \\
	&= \E_{X^t \sim P_{W}}\left[\frac{\int P_{\param}(X^t) \d W_{1,t}(\param)}{\int P_{\param}(X^t) \d W_{0,t}(\param)}\right] .\end{align*}
Hence, for any $W\in \Prior_0$,
$$\E_{X^t\sim P_W}\left[\frac{P_{W_{1,t}}(X^t)}{P_{W_{0,t}}(X^t)}\mid X^{:t-1}\right] =\E_{X^t\sim P_{W}}\left[\frac{\int P_{\param}(X^t) \d W_{1,t}(\param;X^{:t-1})}{\int P_{\param}(X^t) \d W_{0,t}(\param;X^{:t-1})} \mid X^{:t-1}\right] \le1.$$
Then, we have by tower property that for any $t \in \N$,
\begin{align*}
	\E_{P_{W}^{\otimes t}}\left[\prod_{t'=1}^t \frac{P_{W_{1,t'}}(X^{t'})}{P_{W_{0,t'}}(X^{t'})} \mid X^{:t-1}\right]&=\prod_{t'=1}^{t-1} \frac{P_{W_{1,t'}}(X^{t'})}{P_{W_{0,t'}}(X^{t'})} \E_{ P_{W}}\left[\frac{P_{W_{1,t}}(X^t)}{P_{W_{0,t}}(X^t)} \mid X^{:t-1}\right]\\
	&\le \prod_{t'=1}^{t-1} \frac{P_{W_{1,t'}}(X^{t'})}{P_{W_{0,t'}}(X^{t'})}
\end{align*}
Hence, $E_t(X^{:t})$ is a supermartingale with (in particular) $\E_{P}[E_0]\le 1$ for any $P\in\Hyp_0$, hence it is a test supermartingale.

Now, we show log-optimality. Let 
$E_t'$ be a test supermartingale under $\Hyp_0$. Define 
$$M_t = \log(E_t') - \log(E_t)$$
We have that 
\begin{align*}
	&\E_{X^t  \sim P_\pi}\left[M_t \mid  X^{:t-1}\right] \\
	&\hspace{2em} =  \E_{X^t \sim P_\pi}\left[\log\left(\frac{E_t'}{E_{t-1}'}\right) - \log\left(\frac{P_{W_{1,t'}}(X^{t'})}{P_{W_{0,t'}}(X^{t'})}\right) \mid  X^{:t-1}\right] + M_{t-1}
\end{align*}
Now, remark that for any $P \in  \Hyp_0 $, we have \[\E_{X^t\sim P}\left[\frac{E_t'}{E_{t-1}'} \mid X^{:t-1} \right] =\frac{ \E_{X^t\sim P}\left[E_t' \mid X^{:t-1} \right]}{E_{t-1}'} \le 1\] since $E_t'$ is a supermartingale. Hence $\frac{E_t'}{E_{t-1}'}$ is an e-value on $\Hyp_0$ conditionally on $X^{:t-1}=x^{:t-1}$ for any realisation $x^{:t-1}$. By \Cref{th:gro} applied to the law
$$P_{W_{1,t}}(X^t)=\frac{\int  P_{\param}(X^t)\prod_{t'=1}^{t-1} P_{\param}(x_{t'}) \pi(\param) \d \param}{\int \prod_{t'=1}^{t-1} P_{\param}(x_{t'}) \pi(\param) \d \param} = P_\pi (X^t\mid X^{:t-1} = x^{:t-1})$$ with $\Hyp_1' = \{ P_{W_{1,t}} \}$,  
we have that 
\begin{align*}
	\E_{P_{W_{1,t}}}\left[\log\left(\frac{E_t'}{E_{t-1}'}\right) \right] &\le \E_{P_{W_{1,t}}}\left[\log\left(\frac{P_{W_{1,t}}( X^t)}{P_{W_{0,t}}(X^t)}\right)\right] 
\end{align*} 
Hence, 
\begin{equation} \label{eq:poe1} \E_{X^t\sim P_\pi}\left[M_t \mid  X^{:t-1} = x^{:t-1}\right]\le M_{t-1} .\end{equation}

If we define $Y_1 \sim \int P_\prior$, $Y_2\mid Y_1 \sim P_{W_{1,2}}$,\dots, $Y_t\mid Y^{:t-1}  \sim P_{W_{1,t}}$,\dots and $\filtr_t=\sigma(Y^{:t})$ the associated filtration, we have from \Cref{lem:w1post} that $G_t(E,\prior,\tau) = \E_{Y^{:t}}\left[ \log E(Y^{:t})\right] $ and \Cref{eq:poe1} can be rewritten \[\E [M_t\mid \filtr_{t-1}]\leq M_{t-1},\] so that $(M_t)$ is a supermartingale with respect to $(\filtr_t)$. Remark that, for any stopping time $\tau$, we have \begin{align*} 
	\E[M_{t\wedge \tau} \mid \filtr_{t-1}] &= \E[M_{t-1\wedge\tau} + (M_t - M_{t-1})\ind\left\{ \tau\geq t\right\} \mid \filtr_{t-1}] \\
	&= M_{t-1\wedge\tau} +\E[M_t-M_{t-1}\mid \filtr_{t-1}] \ind\left\{\tau\geq t\right\} \\ & \leq M_{t-1\wedge\tau}.
	\end{align*}
Therefore, $(M_{t\wedge\tau})$ is also a supermartingale with respect to $(\filtr_t)$. By the tower property, \begin{align*}
	\E[M_{t\wedge\tau}] &= \E\big[ \E[M_{t\wedge\tau}\mid \filtr_{t-1}]\big] \\
	&\leq \E\big[ \E[ M_{t-1\wedge\tau}\mid \filtr_{t-2}]\big]\end{align*}
and, by induction, $\E[M_{t\wedge\tau}] \leq \E[M_{0\wedge \tau}] =0$. By definition of $M_t$, this means \[G_t(E',\prior,\tau)\leq G_t(E,\prior,\tau)\] which means that $E$ is optimal.
%
\end{proof}

\section{Proof of concentrations for GRO}

\subsection{Proof of Lemma \ref{lem:GRO}}\label{appsect:proofsGro}

Set $Y_t = \sum_{d\in [D]} \log\frac{\lawa_d^{X_d^t}(1-\lawa_d)^{1-X_d^t}}{\lawb_d^{X_d^t}(1-\lawb_d)^{1-X_d^t}}$. As the $Y_t$'s are i.i.d., we have that for all $\lambda$ such that $\E_\Lawa[\exp(\lambda Y)]<\infty$ (which is any $\lambda$ in our setting), through the fundamental identity of sequential analysis~\cite[Corollary 2.4.1]{ghoshSequentialEstimation1997},
\begin{align}\label{eq:fundaidentitysequential}
	\E_\Lawa\left[ \exp\left(\lambda\sum_{t=1}^\tau Y_t\right) \E[\exp(\lambda Y)]^{-\tau}\right] &=1
\end{align}

Moreover, \begin{align*}
	\E_\Lawa[\exp(\lambda Y)] &= \E_\Lawa\left[ \exp\left( \lambda\sum_{d\in [D]} \log\frac{\lawa_d^{X_d}(1-\lawa_d)^{1-X_d}}{\lawb_d^{X_d}(1-\lawb_d)^{1-X_d}}\right)\right] \\
	&= \prod_{d\in [D]} \E_\Lawa\left[ \exp\left( \lambda \log\frac{\lawa_d^{X_d}(1-\lawa_d)^{1-X_d}}{\lawb_d^{X_d}(1-\lawb_d)^{1-X_d}}\right)\right]\end{align*} by the independence of the dimensions. Then, \begin{align*}
	\E_\Lawa[\exp(\lambda Y)]&= \prod_{d\in [D]} \E_\Lawa \left[ \exp\left( \lambda X_d \log\frac{\lawa_d(1-\lawb_d)}{\lawb_d(1-\lawa_d)} + \lambda\log\frac{1-\lawa_d}{1-\lawb_d}\right)\right]\\
	&= \prod_{d\in [D]} \exp\left( \lambda\log\frac{1-\lawa_d}{1-\lawb_d}\right) \E_\Lawa \left[ \exp\left(\lambda X_d \log\frac{\lawa_d(1-\lawb_d)}{\lawb_d(1-\lawa_d)}\right)\right]\\
	&\overset{\eqnum{1}}{\leq} \prod_{d\in [D]} \exp\left( \lambda\log\frac{1-\lawa_d}{1-\lawb_d}\right) \exp\left( \lambda\lawa_d \log\frac{\lawa_d(1-\lawb_d)}{\lawb_d(1-\lawa_d)} + \frac{\lambda^2}{8}\left(\log\frac{\lawa_d(1-\lawb_d)}{\lawb_d(1-\lawa_d)}\right)^2\right) \\
	&= \prod_{d\in [D]} \exp\left( \lambda\kl(\lawa_d,\lawb_d) + \frac{\lambda^2\sigma_d^2}{8}\right)
\end{align*}
where \texteqnum{1} is Hoeffding's lemma.
%
%
Therefore, using \Cref{eq:fundaidentitysequential}, we have by Markov's inequality for all $\lambda$ that, \begin{align*}
	\P_\Lawa \left( \frac{\exp (\lambda\sum_{t=1}^\tau Y_t)}{\prod_{d\in [D]} \exp\left( \lambda\kl(\lawa_d,\lawb_d)+\frac{\lambda^2\sigma_d^2}{8}\right)^\tau}\geq \frac{1}{\nu}\right) &\leq \nu
\end{align*}
and taking the log, we get
$$\P_\Lawa \left( \lambda\sum_{t=1}^\tau Y_t \geq \log \frac{\prod_{d\in [D]} \exp\left( \lambda\kl(\lawa_d,\lawb_d)+\frac{\lambda^2\sigma_d^2}{8}\right)^\tau}{\nu}\right)\leq \nu.$$
Using the fact that $\sum_{t=1}^\tau Y_t = \log\prod_{t=1}^\tau \frac{d\Lawa}{d\Lawb^\star}(X^t)$, where $\prod_{t=1}^{\tau-1} \frac{d\Lawa}{d\Lawb^\star}(X^t) <\frac{1}{\alpha}$ 
by definition of $\tau$ as the smallest crossing time, we have with $\lambda<0$ that, 
\begin{align*}
	\lambda\log\frac{1}{\alpha} +\lambda\sum_{d\in [D]} \max\left\{  \log\frac{1-\lawa_d}{1-\lawb_d}, \log\frac{\lawa_d}{\lawb_d}\right\} &\leq \log\frac{\prod_{d\in [D]} \exp\left( \lambda\kl(\lawa_d,\lawb_d)+\frac{\lambda^2\sigma_d^2}{8}\right)^\tau}{\nu}.
\end{align*}
Hence, rewritting with the notation, with probability greater than $1-\nu$, we have

\begin{align*}
\lambda\log\frac{1}{\alpha} +\lambda m( \lawa,\lawb) &\leq \log\prod_{d\in [D]} \exp\left( \lambda\kl(\lawa_d,\lawb_d)+\frac{\lambda^2\sigma_d^2}{8}\right)^\tau +\log \frac{1}{\nu}\\
 &= \tau \sum_{d\in [D]} \left( \lambda\kl(\lawa_d,\lawb_d)+\frac{\lambda^2\sigma_d^2}{8}\right),
\end{align*} 
so that, for any $\lambda$ such that $  -\frac{8\sum_{d} \kl(\lawa_d,\lawb_d)}{\sum_{d\in [D]} \sigma_d^2}\le \lambda\le 0$, we have $\sum_{d} \left( \lambda\kl(\lawa_d,\lawb_d)+\frac{\lambda^2\sigma_d^2}{8}\right) <0$ and
\begin{align*}
	\P_\Lawa\left(\tau \le  \frac{\lambda\log\frac{1}{\alpha} +\lambda m( \lawa,\lawb)+\log \nu}{\sum_{d\in [D]} \left( \lambda\kl(\lawa_d,\lawb_d)+\frac{\lambda^2\sigma_d^2}{8}\right)}  \right) &\ge 1- \nu
\end{align*} 

Now take $ \lambda = -\varepsilon \frac{8\sum_{d\in [D]} \kl(\lawa_d,\lawb_d)}{\sum_{d\in [D]} \sigma_d^2}$ for $\varepsilon\in (0,1)$, we get with probability larger than $1-\nu$,
\begin{align*}\label{eq:power}
	\tau \le \frac{\log\frac{1}{\alpha} +m( \lawa,\lawb)}{(1-\varepsilon)\sum_{d\in [D]}  \kl(\lawa_d,\lawb_d)}+\frac{\log (1/\nu) \sum_{d\in [D]} \sigma_d^2}{8\varepsilon(1-\varepsilon)\left(\sum_{d\in [D]} \kl(\lawa_d,\lawb_d)\right)^2}
\end{align*} which concludes the proof.

\subsection{Link between $m$, $\sigma_d$, and $\kl$}\label{appsect:mandsigma}

We have $\sigma_d = \left| \log\frac{\lawa_d(1-\lawb_d)}{\lawb_d(1-\lawa_d)}\right|$ and $m(\lawa,\lawb)=\sum_{d\in [D]} m_d(\lawa,\lawb)$ with $m_d(\lawa,\lawb)=\max\left\{  \log\frac{1-\lawa_d}{1-\lawb_d}, \log\frac{\lawa_d}{\lawb_d}\right\}$.

Fix some dimension $d$. If $\lawb_d\geq \lawa_d$, then $$0\leq m_d(\lawa,\lawb) = \log\frac{1-\lawa_d}{1-\lawb_d}\leq \log\frac{1-\lawa_d}{1-\lawb_d} + \log\frac{\lawb_d}{\lawa_d} = \sigma_d .$$
Conversely, if $\lawb_d \leq \lawa_d$, then $$0\leq m_d(\lawa,\lawb) = \log\frac{\lawa_d}{\lawb_d}\leq \log\frac{\lawa_d}{\lawb_d} + \log\frac{1-\lawb_d}{1-\lawa_d} = \sigma_d .$$

Combining these two results yields \Cref{eq:mandsigma}.

Moreover, $x\in (0,1)\mapsto x^{\lawa_d+1}$, $x\in (0,1)\mapsto \frac{1}{1-x}\in \R_+$ and $x\in \R_+\mapsto x^{\lawa_d}$ are increasing functions; therefore, $x\in (0,1)\mapsto \frac{x^{\lawa_d+1}}{(1-x)^\lawa_d}$ is increasing, and for all $x>\lawa_d$, \[
\frac{x^{\lawa_d+1}}{(1-x)^{\lawa_d}} \geq \frac{\lawa_d^{\lawa_d+1}}{(1-\lawa_d)^{\lawa_d}}
\]
so that, with $x=\lawb_d$, if $\lawb_d > \lawa_d$,
\begin{align*}
	\frac{\lawb_d^{\lawa_d+1}}{(1-\lawb_d)^{\lawa_d}} &\geq \frac{\lawa_d^{\lawa_d+1}}{(1-\lawa_d)^{\lawa_d}} \\
	\frac{\lawb_d(1-\lawa_d)}{\lawa_d(1-\lawb_d)} &\geq \left( \frac{\lawa_d}{\lawb_d}\right)^{\lawa_d}\left( \frac{1-\lawa_d}{1-\lawb_d}\right)^{1-\lawa_d}\\
	\log 	\frac{\lawb_d(1-\lawa_d)}{\lawa_d(1-\lawb_d)} &\geq \lawa_d\log \frac{\lawa_d}{\lawb_d} + (1-\lawa_d)\log\frac{1-\lawa_d}{1-\lawb_d}\\
	\sigma_d&\geq \kl(\lawa_d,\lawb_d)
\end{align*}
Conversely, if $\lawb_d\leq \lawa_d$, we have $\sigma_d \geq \kl(\lawa_d,\lawb_d)$, so that in the end we get \Cref{eq:sigmaandkl}.

\section{Proof of concentration for POE}\label{appsect:proofsPOE}

\subsection{An auxiliary lemma}
\begin{lemma}\label{lem:auxPOE}
	Suppose that $X^{:T}\sim P_\lawb^{\otimes T}$ with parameter $\lawb\in\Param'\subseteq \Param$. Let 
	$$g_{t,d}(\param) = X_d^t \log(\param_d) + (1-X_d^t)\log(1-\param_d) .$$ Then, we have
	\begin{align*}\log\left(\int\prod_{d=1}^D \param_d^{T\overline{X}_d}(1-\param_d)^{T(1-\overline{X}_d)} \d\pi(\param)\right)&\\
		&\hspace{-8em} \ge  \sum_{t=1}^T \sum_{d=1}^Dg_{t,d}(\lawb)-1 - \log\left(\frac{1}{\pi\left(\mathcal{B}_{\infty}\left( \left( 1-\frac 1{TD}\right)\lawb, \frac{1}{TD}\right)\cap \Param'\right)}\right)\end{align*} where $\overline{X}_d = \frac 1 T \sum_{t=1}^T X^t_d$.
	
	In particular, if $\pi$ is the uniform distribution on $\Param' \subset [0,1]^D$ and $\mathcal{B}_{\infty}\left( \left( 1-\frac 1{TD}\right)\lawb, \frac{1}{TD}\right) \subset \Param'$, we get
	$$\log\left(\int\prod_{d=1}^D \param_d^{T\overline{X}_d}(1-\param_d)^{T(1-\overline{X}_d)} \pi(\param)\d \param\right) \ge  \sum_{t=1}^T \sum_{d=1}^Dg_{t,d}(\lawb)-1 - D\log\left(TD\right).$$
\end{lemma}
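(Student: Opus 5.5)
The plan is to lower-bound the mixture likelihood by restricting the integral to the small box $B:=\mathcal{B}_{\infty}\left( \left( 1-\frac 1{TD}\right)\lawb, \frac{1}{TD}\right)\cap\Param'$. On $B$ we show that each factor of the likelihood is close to its value at $\lawb$. Note first that the integrand equals $\exp\left(\sum_{t=1}^T\sum_{d=1}^D g_{t,d}(\param)\right)$, since $\param_d^{T\overline{X}_d}(1-\param_d)^{T(1-\overline{X}_d)}=\prod_t \param_d^{X_d^t}(1-\param_d)^{1-X_d^t}$. Since the integrand is nonnegative,
\[
\int \exp\Big(\sum_{t,d} g_{t,d}(\param)\Big)\d\pi(\param)\;\ge\;\pi(B)\,\inf_{\param\in B}\exp\Big(\sum_{t,d} g_{t,d}(\param)\Big),
\]
so it suffices to prove that $\sum_{t,d}\big(g_{t,d}(\param)-g_{t,d}(\lawb)\big)\ge -1$ for every $\param\in B$. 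Taking logarithms then gives the claim.

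The key step is a per-coordinate comparison. The box is centred at $(1-\frac1{TD})\lawb$ with radius $\frac1{TD}$, so every $\param\in B$ satisfies
\[
\param_d\ge \Big(1-\tfrac1{TD}\Big)\lawb_d \quad\text{and}\quad 1-\param_d\ge 1-\lawb_d+\tfrac{\lawb_d}{TD}-\tfrac1{TD}.
\]
The first inequality is fine. The second would need $1-\param_d\ge(1-\frac1{TD})(1-\lawb_d)$, which is what the shrinking towards $0$ is meant to ensure, but with the stated centre it only holds up to a slack of $\frac{\lawb_d}{TD}$ in the wrong direction. This is the step I expect to be delicate. I would first check whether the box is intended to satisfy $\param_d\in[(1-\frac1{TD})\lawb_d,\,(1-\frac1{TD})\lawb_d+\frac1{TD}]$, i.e. whether the ball is meant as the image of $[0,1]^D$ under $x\mapsto(1-\frac1{TD})\lawb+\frac1{TD}x$, a mixture $(1-\frac1{TD})\lawb+\frac1{TD}u$ with $u\in[0,1]^D$. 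Under that reading both bounds hold: $\param_d\ge(1-\frac1{TD})\lawb_d$ and $1-\param_d\ge(1-\frac1{TD})(1-\lawb_d)$. I would adopt this reading, which is the standard trick for mixture lower bounds.

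Granting these bounds, for each $t,d$ we have
\[
g_{t,d}(\param)\ge X_d^t\log\big((1-\tfrac1{TD})\lawb_d\big)+(1-X_d^t)\log\big((1-\tfrac1{TD})(1-\lawb_d)\big)=g_{t,d}(\lawb)+\log\big(1-\tfrac1{TD}\big).
\]
Summing over the $TD$ pairs $(t,d)$ gives a loss of $TD\log(1-\frac1{TD})$. This is at least $-1$ only approximately, since $(1-1/n)^n\ge 1/4$ for $n\ge2$ but tends to $e^{-1}$ from below. So I would either use $\log(1-x)\ge -x/(1-x)$ and absorb the small excess, or shrink the factor to $\frac1{TD+1}$ and then bound $(1-\frac1{TD+1})^{TD}=(\frac{TD}{TD+1})^{TD}\ge e^{-1}$, which is exact. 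This constant bookkeeping is the only delicate part. The rest is immediate: combine to get $\log\int\ge\sum g_{t,d}(\lawb)-1+\log\pi(B)$.

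For the uniform case, $\pi(B)=\mathrm{vol}(B)/\mathrm{vol}(\Param')\ge (1/(TD))^D$ when $B\subset\Param'\subset[0,1]^D$, because $\mathrm{vol}(\Param')\le1$ and the box has side at least $\frac1{TD}$. Hence $-\log(1/\pi(B))\ge -D\log(TD)$, which gives the second display.
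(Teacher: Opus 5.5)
Your plan is the paper's own. The paper applies Donsker--Varadhan with $r$ the restriction of $\pi$ to $\widetilde\Param=(1-\alpha)\lawb+\alpha\Param$, bounds each factor pointwise by $(1-\alpha)$ times its value at $\lawb$, and takes $\alpha=\frac1{TD+1}$. With a pointwise bound this is the same as your restriction plus infimum.

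Two things go wrong. First, your diagnosis of the box is reversed. The quantity $1-\lawb_d+\frac{\lawb_d}{TD}-\frac1{TD}$ equals $(1-\frac1{TD})(1-\lawb_d)$ exactly, so the upper side is fine. The bound $\param_d\ge(1-\frac1{TD})\lawb_d$ that you call fine is the one that fails, because the centred ball reaches down to $(1-\frac1{TD})\lawb_d-\frac1{TD}$. Switching to the one-sided box $(1-\frac1{TD})\lawb+\frac1{TD}[0,1]^D$ repairs this, but it changes the statement.

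Second, and this is the genuine gap, the final step does not close. With $\alpha=\frac1{TD}$ the loss is $TD\log\frac{TD}{TD-1}>1$ for every $TD$, and infinite at $TD=1$. The inequality $\log(1-x)\ge-\frac{x}{1-x}$ only gives the constant $-\frac{TD}{TD-1}$. With $\alpha=\frac1{TD+1}$ the loss is at most $1$, but the set where the pointwise bound holds becomes $\widetilde\Param=\frac{TD}{TD+1}\lawb+\frac1{TD+1}[0,1]^D$. This set is strictly contained in your box and in $\mathcal{B}_\infty((1-\frac1{TD})\lawb,\frac1{TD})$. Hence $\pi(\widetilde\Param)\le\pi(B)$, which is the wrong direction for replacing $\log\pi(\widetilde\Param)$ by $\log\pi(B)$. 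The paper's proof breaks at the same place: its last sentence invokes $\widetilde\Param\subseteq\mathcal{B}_\infty(\cdot)\cap\Param'$, which is true but is the wrong inclusion.

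In fact no bookkeeping can rescue the first display, because it is false for general $\pi$. Take $\Param'=[0,1]^D$ and $\lawb\in(0,1)^D$. Let $\pi$ be the Dirac mass at the centre $(1-\frac1{TD})\lawb$; this point lies in both the ball and your one-sided box, so $\pi(B)=1$. Take all $X^t_d=1$. Then the left side minus $\sum_{t,d}g_{t,d}(\lawb)$ equals $TD\log(1-\frac1{TD})<-1$, and it is $-\infty$ when $TD=1$.

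What you and the paper actually prove is the display with $\pi(B)$ replaced by $\pi(\widetilde\Param)$ and $\alpha=\frac1{TD+1}$. In the uniform case this gives $D\log(TD+1)$ rather than $D\log(TD)$. The stated uniform bound is nevertheless true if you keep the average instead of the infimum:
\begin{itemize}
\item Let $S=(1-a)\lawb+a[0,1]^D$ with $a=\frac1{TD}$; it is contained in the ball, hence in $\Param'$.
\item Let $r$ be uniform on $S$, set $f=\sum_{t,d}g_{t,d}$, and use $\int e^{f}\d\pi\ge\pi(S)\E_r[e^{f}]\ge\pi(S)e^{\E_r[f]}$.
\item Write $\param_d=(1-a)\lawb_d+aU_d$ with $U_d$ uniform on $[0,1]$. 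Concavity of $\log$ gives $\E_r[\log\param_d]\ge(1-a)\log\lawb_d-a\ge\log\lawb_d-a$.
\item The same holds for $1-\param_d=(1-a)(1-\lawb_d)+a(1-U_d)$, so $\E_r[f]\ge\sum_{t,d}g_{t,d}(\lawb)-1$.
\item Finally $\pi(S)=a^D/\mathrm{vol}(\Param')\ge(TD)^{-D}$.
\end{itemize}
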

\begin{proof}
	The proof mimic the one of Lemma F.1 from \cite{agrawalOptimalBestarmIdentification2021}. 
	For any distribution $r$ over $\Param$, we have by the Donsker-Varadhan representation of the KL divergence, 
	\begin{align*}
		\KL(r,\pi) &\ge \E_{\param \sim r}\left[\sum_{t=1}^T \sum_{d=1}^D g_{t,d}(\param) \right] - \log\left(\E_{\param \sim \pi}\left[\exp\left(\sum_{t=1}^T \sum_{d=1}^D g_{t,d}(\param)\right)\right]\right).
	\end{align*}
	Hence, 
	\begin{align*}
		\log\left(\int\prod_{d=1}^D \param_d^{T\overline{X}_d}(1-\param_d)^{T(1-\overline{X}_d)} \d \pi(\param)\right)&=   \log\left(\E_{\param \sim \pi}\left[\exp\left(\sum_{t=1}^T \sum_{d=1}^D g_{t,d}(\param)\right)\right]\right)\\
		&\ge \E_{\param \sim r}\left[\sum_{t=1}^T \sum_{d=1}^D g_{t,d}(\param) \right] - \KL(r,\pi).
	\end{align*}
	Fix $\alpha\in (0,1)$. Now set $\widetilde{\Param}=\{ (1-\alpha)\lawb + \alpha \param, \ \param \in \Param\}$. Because $\Param$ is convex, we have that $\widetilde{\Param}\subset \Param$ and we consider $r$ the distribution $\pi$ restricted to $\widetilde{\Param}$. We have that 
	$$\KL\left(r, \pi\right)=\frac{1}{\pi(\widetilde{\Param})}\int_{\widetilde{\Param}} \log\left(\frac{1}{\pi(\widetilde{\Param})}\right)\pi(\param)\d \param = \log\left(\frac{1}{\pi(\widetilde{\Param})}\right)  $$
	And we have, for any $(1-\alpha)\lawb + \alpha \param \in \widetilde{\Param}$,
	$$\exp\left(g_{t,d}((1-\alpha)\lawb + \alpha \param)\right)  = ((1-\alpha)\lawb_d + \alpha \param_d)^{X_{d}^t} (1-((1-\alpha)\lawb_d + \alpha \param_d))^{1-X_d^t} .$$
	
	If $X_d^t = 0$, this is equal to $$1-((1-\alpha)\lawb_d + \alpha \param_d) \ge 1-((1-\alpha)\lawb_d + \alpha) \ge (1-\alpha)(1 - \lawb_d) ;$$ and, if $X_d^t = 1$, this is equal to $$(1-\alpha)\lawb_d + \alpha \param_d \geq (1-\alpha)\lawb_d. $$ In both cases we have
	$$\exp\left(g_{t,d}((1-\alpha)\lawb + \alpha \param)\right) \ge (1-\alpha) \exp(g_{t,d}(\lawb)).$$
	Hence,
	\begin{align*}
		&\log\left(\int\prod_{d=1}^D \param_d^{T\overline{X}_d}(1-\param_d)^{T(1-\overline{X}_d)} \d \pi(\param) \right) \\
		&\hspace{6em} \ge  \sum_{t=1}^T \sum_{d=1}^Dg_{t,d}(\lawb)-TD \log\left(\frac{1}{1-\alpha}\right) - \log\left(\frac{1}{\pi(\widetilde{\Param})}\right).
	\end{align*}
	Now take $\alpha = 1/(TD+1)$, we get $TD \log\left(\frac{1}{1-\alpha}\right) = TD \log\left(1+ \frac{1}{TD}\right) \le 1$ and, with $\widetilde{\Param} \subseteq \mathcal{B}_{\infty}\left( \left( 1-\frac 1{TD}\right)\lawb, \frac{1}{TD}\right)\cap \Param'$, we get the final result.
	
\end{proof}

\subsection{Proof of Theorem \ref{th:POE}}

We want to bound 
$$g(X^{:T})= \log \left( \prod_{t=1}^T P_{W_{0,t}}(X^t)\right) = \sum_{t=1}^T \log\left(  \int P_\param (X^t) \d W_{0,t}(\param)\right).$$
We have that $g$ is a function of $TD$ independent random variables, the goal is then to apply McDiarmid's inequality. To do that, fix some $t'$ and $d'$. Consider $\tilde{X}^1,\dots,\tilde{X}^T$ where $\tilde{X}^t_d = X^t_d$ for $t,d \neq t',d'$ and $\tilde{X}^{t'}_{d'}$ arbitrary. Remark that, since $\varepsilon\le \param_d \le 1-\varepsilon$ for all $d$, we have
\begin{align*} \varepsilon\int \prod_{d\neq d'}^D \param_d^{X^{t'}_d} (1-\param_d)^{1-X^{t'}_d} \d W_{0,t}(\param)  &\le \int \prod_{d=1}^D \param_d^{X^{t'}_d} (1-\param_d)^{1-X^{t'}_d} \d W_{0,t}(\param)  \\
	& \le \int \prod_{d\neq d'}^D \param_d^{X^{t'}_d} (1-\param_d)^{1-X^{t'}_d} \d W_{0,t}(\param) \end{align*}
and similarly, we can bound, replacing $X$ with $\tilde{X}$,
\begin{align*} \varepsilon\int \prod_{d\neq d'}^D \param_d^{X^{t'}_d} (1-\param_d)^{1-X^{t'}_d} \d W_{0,t}(\param)  &\le \int \prod_{d=1}^D \param_d^{\tilde{X}^{t'}_d} (1-\param_d)^{1-\tilde{X}^{t'}_d} \d W_{0,t}(\param)  \\
	&\le \int \prod_{d\neq d'}^D \param_d^{X^{t'}_d} (1-\param_d)^{1-X^{t'}_d} \d W_{0,t}(\param) \end{align*}
Hence, 
$$\varepsilon \le \frac{\int \prod_{d=1}^D \param_d^{\tilde{X}^{t'}_d} (1-\param_d)^{1-\tilde{X}^{t'}_d} \d W_{0,t}(\param) }{\int \prod_{d=1}^D \param_d^{X^{t'}_d} (1-\param_d)^{1-X^{t'}_d} \d W_{0,t}(\param)} \le \frac{1}{\varepsilon} .$$
So that we get
$$|g(X^{:T}) - g(\tilde{X}^{:T})| \le \log(1/\varepsilon).$$
By McDiarmid inequality, this means that with probability larger than $1-\delta$, we have
\begin{align*} g(X^{:T}) &\ge \E_Q\left[g(X^{:T})\right] -\sqrt{\frac{ \sum_{n,d} \log(1/\varepsilon)^2 \log(1/\delta)}{2}} \\
	&=  \E_Q\left[g(X^{:T})\right] -\log(1/\varepsilon)\sqrt{\frac{TD \log(1/\delta)}{2}}. \end{align*}
Then, we control the expectation. 
\begin{align*}
	\E_{X^{:T}\sim Q^{\otimes T}}\left[g(X^{:T})\right]&= \E_{Q^{\otimes T}}\left[\log \left( \prod_{t=1}^T P_{W_{0,t}}(X^t)\right)\right]\\
	&= \sum_{t=1}^T\E_Q\left[\log \left(P_{W_{0,t}}(X^t)\right)\right]
\end{align*}
Hence, with probability larger than $1-\delta$
$$\log \left( \prod_{t=1}^T P_{W_{0,t}}(X^t)\right) \ge \sum_{t=1}^T\E_Q\left[\log \left(P_{W_{0,t}}(X^t)\right)\right] - \log(1/\varepsilon)\sqrt{\frac{ TD \log(1/\delta)}{2}} $$
Similarly, we also have with probability larger than $1-\delta$,
$$\log \left( \prod_{t=1}^T P_{W_{1,t}}(X^t)\right) \le  \sum_{t=1}^T\E_Q\left[\log \left(P_{W_{1,t}}(X^t)\right)\right] + \log(1/\varepsilon)\sqrt{\frac{TD \log(1/\delta)}{2}} $$
Hence, with probability larger than $1-2\delta$, 
\begin{align*}
	\log \left( \prod_{t=1}^T \frac{d\hat{Q}_t(X^t)}{d\hat{P}_t(X^t)}\right) &\ge \sum_{t=1}^T\E_Q\left[\log \left(P_{W_{1,t}}(X^t)\right)\right]-\sum_{t=1}^T\E_Q\left[\log \left(P_{W_{0,t}}(X^t)\right)\right] \\
	&\hspace{6em}-\log(1/\varepsilon)\sqrt{2 TD \log(1/\delta)}\\
	&= \sum_{t=1}^T \left(- \KL\left(Q, P_{W_{1,t}}\right) + \KL\left(Q, P_{W_{0,t}}\right)\right)-\log\frac 1 \varepsilon \sqrt{2 TD \log\frac 1 \delta}
\end{align*}
Then, remark that $P^\star$ satisfies $P^\star \in \arg\inf_{P} \KL(Q,P)$ by definition of the RIPr; hence, $\KL(Q,P_{W_{0,n}}) \ge \KL(Q,P^\star)$ and 
\begin{align*}
	\log \left( \prod_{t=1}^T \frac{\d\hat{Q}_t(X^t)}{\d\hat{P}_t(X^t)}\right) \ge& \sum_{t=1}^T \left(- \KL\left(Q, P_{W_{1,t}}\right) + \KL\left(Q, P^*\right)\right)-\log(1/\varepsilon)\sqrt{2 TD \log(1/\delta)}
\end{align*}
We then want to upper bound
\begin{align*}
	\sum_{t=1}^T  \KL\left(Q, P_{W_{1,t}}\right)&= \E_Q\left[ \log\left(\prod_{t=1}^T\frac{Q(X^t)}{P_{W_{1,t}}(X^t)}\right)\right]\\
	&= \E_Q\left[ \log\left(\frac{\prod_{d=1}^D \mu_d^{T\overline{X}_d}(1-\mu_d)^{T(1-\overline{X}_d)}}{\int\prod_{d=1}^D \param_d^{T\overline{X}_d}(1-\param_d)^{T(1-\overline{X}_d)} \d \pi(\param)}\right)\right]\\
\end{align*}
and we conclude with \Cref{lem:auxPOE}.

\section{Proof for the fixed sample size tests}

\subsection{Proof of \Cref{eq:exptau}}\label{appsect:GROexp}

We finish here the proof for \Cref{th:GRO}.

We start by assuming that the data was collected according to $Q\in\Hyp_1$, and denote $\lawb$ the parameter of the distribution of its RIPr onto $\Hyp_0$.

From Lemma~\ref{lem:GRO},
with $\gamma\in (0,1)$,
\[ \P_\Lawa \left( \tau \ge \frac{\log\frac{1}{\alpha} +m( \lawa,\lawb)}{(1-\gamma)\sum_{d\in [D]}  \kl(\lawa_d,\lawb_d)}+\frac{\log (1/\nu) \sum_{d\in [D]} \sigma_d^2}{8\gamma(1-\gamma)\left(\sum_{d\in [D]} \kl(\lawa_d,\lawb_d)\right)^2}
\right) \leq \nu \]
so that, with $\nu_t = \exp\left( \left(\frac{\log(1/\alpha) +m( \lawa,\lawb)}{(1-\gamma)\sum_d \kl(\lawa_d,\lawb_d)} -t\right) \frac{8\gamma(1-\gamma)\left(\sum_d \kl(\lawa_d,\lawb_d)\right)^2}{\sum_d \sigma_d^2}\right)$, \[ \P_\Lawa\left( \tau\geq t\right)\leq \nu_t\]

So that{\small \begin{align*} 
		\E_\Lawa[\tau] &\leq T + \sum_{t=T}^\infty \nu_t \\
		&\leq T+\sum_{t=T}^\infty \exp\left( \left( \frac{\log(1/\alpha) +m( \lawa,\lawb)}{(1-\gamma)\sum_d \kl(\lawa_d,\lawb_d)} -t\right) \frac{8\gamma(1-\gamma)\left(\sum_d \kl(\lawa_d,\lawb_d)\right)^2}{\sum_d \sigma_d^2}\right) \\
		&\leq T+ \exp\left(  \frac{8\gamma\left(\log(1/\alpha) +m( \lawa,\lawb)\right) \sum_d \kl(\lawa_d,\lawb_d)}{\sum_d \sigma_d^2}\right) \sum_{t=T}^\infty \exp\left( -8t\gamma(1-\gamma)\frac{\left(\sum_d \kl(\lawa_d,\lawb_d)\right)^2}{\sum_d \sigma_d^2}\right) \\
		&\leq T + \exp\left(  \frac{8\gamma\left(\log(1/\alpha) +m( \lawa,\lawb)\right) \sum_d \kl(\lawa_d,\lawb_d)}{\sum_d \sigma_d^2}\right) \frac{\exp\left( -8T\gamma(1-\gamma)\frac{\left(\sum_d \kl(\lawa_d,\lawb_d)\right)^2}{\sum_d \sigma_d^2}\right)}{1-\exp\left( -8\gamma(1-\gamma)\frac{\left(\sum_d \kl(\lawa_d,\lawb_d)\right)^2}{\sum_d \sigma_d^2}\right)}
\end{align*}}

Taking $T=\frac{1}{1-\gamma} \frac{\log 1/\alpha}{\sum_d \kl(\lawa_d,\lawb_d)}$, \small{
	\begin{align*}
		\E_\Lawa[\tau]&\leq \frac{1}{1-\gamma} \frac{\log 1/\alpha}{\sum_d \kl(\lawa_d,\lawb_d)} \\
		&\hspace{2em}  + \exp\left(  \frac{8\gamma\left(\log(1/\alpha) +m( \lawa,\lawb)\right) \sum_d \kl(\lawa_d,\lawb_d)}{\sum_d \sigma_d^2}\right) \frac{\exp\left( -8 \log (1/\alpha)\gamma\frac{\sum_d \kl(\lawa_d,\lawb_d)}{\sum_d \sigma_d^2}\right)}{1-\exp\left( -8\gamma(1-\gamma)\frac{\left(\sum_d \kl(\lawa_d,\lawb_d)\right)^2}{\sum_d \sigma_d^2}\right)}\\
		&\leq \frac{1}{1-\gamma} \frac{\log 1/\alpha}{\sum_d \kl(\lawa_d,\lawb_d)} + \frac{\exp\left(  8\gamma\frac{m( \lawa,\lawb) \sum_d \kl(\lawa_d,\lawb_d)}{\sum_d \sigma^2_d} \right)}{1-\exp\left( -8\gamma(1-\gamma)\frac{\left(\sum_d \kl(\lawa_d,\lawb_d)\right)^2}{\sum_d \sigma_d^2}\right)}
\end{align*}}

Taking $\gamma = \sqrt{\frac{\sum_d\kl(\lawa_d,\lawb_d)}{\log(1/\alpha)}}$ for $\alpha$ small enough that $\gamma\leq \frac 1 2$, the right hand term becomes
\begin{align*}
	&\left(1+\sqrt{\frac{\sum_d\kl(\lawa_d,\lawb_d)}{\log(1/\alpha)}} + \cO  \left( \frac{\sum_d\kl(\lawa_d,\lawb_d)}{\log(1/\alpha)}\right) \right) \frac{\log(1/\alpha)}{\sum_d \kl(\lawa_d,\lawb_d)} 
	\\ &\hspace{4em} + 
	\frac{\exp\left( 8 \frac{  m( \lawa,\lawb)\left( \sum_d \kl(\lawa_d,\lawb_d)\right)^{3/2}}{\sqrt{\log(1/\alpha)}\sum_d \sigma_d^2}\right)}
	{1-\exp\left( 4\frac{\left(\sum_d \kl(\lawa_d,\lawb_d)\right)^{5/2}}{\sqrt{\log(1/\alpha)}\sum_d \sigma_d^2}
		\right)}\\
	= & \frac{\log(1/\alpha)}{\sum_d \kl(\lawa_d,\lawb_d)} + \sqrt{\frac{\log(1/\alpha)}{\sum_d \kl(\lawa_d,\lawb_d)}} + \cO(1) \\
	&\hspace{4em} +	\frac{1+ 8 \frac{  m( \lawa,\lawb)\left( \sum_d \kl(\lawa_d,\lawb_d)\right)^{3/2}}{\sqrt{\log(1/\alpha)}\sum_d \sigma_d^2} + \cO\left( \frac{  m( \lawa,\lawb)^2\left( \sum_d \kl(\lawa_d,\lawb_d)\right)^{3}}{\log(1/\alpha)\left(\sum_d \sigma_d^2\right)^2} \right)}
	{ 4\frac{\left(\sum_d \kl(\lawa_d,\lawb_d)\right)^{5/2}}{\sqrt{\log(1/\alpha)}\sum_d \sigma_d^2}
		+ \cO\left( \frac{\left(\sum_d \kl(\lawa_d,\lawb_d)\right)^{5}}{\log(1/\alpha)\left(\sum_d \sigma_d^2\right)^2}
		\right)
	} 
\end{align*} which then yields 
\begin{align*}
	& \frac{\log(1/\alpha)}{\sum_d \kl(\lawa_d,\lawb_d)} + \sqrt{\frac{\log(1/\alpha)}{\sum_d \kl(\lawa_d,\lawb_d)}} + \cO(1) + \frac{\sqrt{\log(1/\alpha)}\sum_d \sigma_d^2}{4\left(\sum_d \kl(\lawa_d,\lawb_d)\right)^{5/2}} +\cO\left(1 \right)\\
	&\hspace{4em} + \frac{  2m( \lawa,\lawb) + \cO\left( \frac{  m( \lawa,\lawb)^2\left( \sum_d \kl(\lawa_d,\lawb_d)\right)^{3/2}}{\sqrt{\log(1/\alpha)}\sum_d \sigma_d^2} \right)}
	{\sum_d \kl(\lawa_d,\lawb_d)  + \cO\left( \frac{\left(\sum_d \kl(\lawa_d,\lawb_d)\right)^{7/2}}{\sqrt{\log(1/\alpha)}\sum_d \sigma_d^2} \right)} \\
\end{align*} so that \[\E_\Lawa[\tau] = \frac{\log(1/\alpha)}{\sum_d \kl(\lawa_d,\lawb_d)} + \cO\left(\sqrt{\frac{\log(1/\alpha)}{\sum_d \kl(\lawa_d,\lawb_d)}}\right) \]

If the data is collected under $\Hyp_0$ instead, the same computations can be run. Since the stopping time of the combined test is the minimum of the two stopping times, we have the result of \Cref{eq:exptau}.

\subsection{Proof of Theorem \ref{th:POE_samplesize}}\label{appsect:fixedPOE}

By \Cref{th:POE}, to achieve power $1-\beta$, it suffices that \[ T \KL\left(Q, P^*\right)-1-\log\left(\frac{1}{\prior\left(\mathcal{B}_{\infty}\left( \left( 1-\frac 1{TD}\right)\lawa, \frac{1}{TD}\right)\cap \Param_1 \right)}\right)  -\log(1/\varepsilon)\sqrt{2 TD \log(2/\beta)} \geq \log 1/\alpha .\]
Let us assume the converse. We then have \begin{align*}
	T \KL\left(Q, P^*\right)-1-\log\left(\frac{1}{\prior\left(\mathcal{B}_{\infty}\left( \left( 1-\frac 1{TD}\right)\lawa, \frac{1}{TD}\right)\cap \Param_1 \right)}\right)  -\log(1/\varepsilon)\sqrt{2 TD \log(2/\beta)} &\leq \log 1/\alpha \\
	T \KL\left(Q, P^*\right)-\left( 1+\log\left(\frac{1}{\prior\left(\mathcal{B}_{\infty}\left( \left( 1-\frac 1{TD}\right)\lawa, \frac{1}{TD}\right)\cap \Param_1 \right)}\right) +\log 1/\alpha\right) &\leq \log(1/\varepsilon)\sqrt{2 TD \log(2/\beta)}
\end{align*} so that, fixing $B=1+\log\left(\frac{1}{\prior\left(\mathcal{B}_{\infty}\left( \left( 1-\frac 1{TD}\right)\lawa, \frac{1}{TD}\right)\cap \Param_1 \right)}\right) +\log 1/\alpha$ and squaring, since $T$ is chosen so that the left-hand side is positive,
\[T^2 \KL(Q,P^\star)^2 -T\left(2\KL(Q,P^\star)B+2D\ln\frac{2}{\beta}\left(\log\frac{1}{\varepsilon}\right)^2\right)
+B^2 \leq 0\] and
{\smaller 
	\[
T\leq \frac{2\KL(Q,P^\star)B +2D\ln\frac{2}{\beta}\left(\log\frac{1}{\varepsilon}\right)^2 +\sqrt{ \left( 2\KL(Q,P^\star)B +2D\ln\frac{2}{\beta}\log\left(\frac{1}{\varepsilon}\right)^2\right)^2 -4\KL(Q,P^\star)^2B^2}}{2\KL(Q,P^\star)^2}, \] }
which yields {\smaller
\begin{align*}
\KL(Q,P^\star)^2 T&\leq \KL(Q,P^\star)B +D\ln\frac{2}{\beta}\left(\log\frac{1}{\varepsilon}\right)^2 \\
& \hspace{1.5em} +\sqrt{ 2\KL(Q,P^\star)B D\ln\frac{2}{\beta}\log\left(\frac{1}{\varepsilon}\right)^2 + D^2\left(\ln\frac{2}{\beta}\right)^2 \left(\log\frac{1}{\varepsilon}\right)^4 } \\
 &\leq \KL(Q,P^\star)B +D\ln\frac{2}{\beta}\left(\log\frac{1}{\varepsilon}\right)^2 \\
& \hspace{1.5em} +\log\frac{1}{\varepsilon}\sqrt{D\ln\frac{2}{\beta}\left( 2\KL(Q,P^\star)B  + D\ln\frac{2}{\beta}\left(\log\frac{1}{\varepsilon}\right)^2\right) }\end{align*} } and finally  \begin{align*}
T&\leq \frac{B' +\log\frac{1}{\alpha}}{\KL(Q,P^\star)} + \frac{D\ln\frac{2}{\beta}\left(\log\frac{1}{\varepsilon}\right)^2}{\KL(Q,P^\star)^2} \\ 
&\hspace{2em}+ \frac{\log\frac{1}{\varepsilon}\sqrt{D\ln\frac{2}{\beta}}}{\KL(Q,P^\star)^2}\sqrt{ 2\KL(Q,P^\star)\left(B' +\log\frac{1}{\alpha}\right)  + D\ln\frac{2}{\beta}\left(\log\frac{1}{\varepsilon}\right)^2 }
\end{align*} with $B' = 1+\log\left(\frac{1}{\prior\left(\mathcal{B}_{\infty}\left( \left( 1-\frac 1{TD}\right)\lawa, \frac{1}{TD}\right)\cap \Param_1 \right)}\right)$,
which yields the theorem.

\section{Proofs for fixed confidence tests: proof of Theorem \ref{th:auxfc}}\label{appsect:proofauxfc}

We focus on the case $i=1$; the case $i=0$ works the exact same way.

From \Cref{th:POE}, we have that, if $T$ satisfies $$T\KL(Q,P^\star)-1-\log\frac{1}{\prior\left(\mathcal{B}_{\infty}\left( \left( 1-\frac 1{TD}\right)\lawa, \frac{1}{TD}\right)\cap \Param_i \right)} -\log\frac{1}{\alpha}\geq 0$$ then, with $\beta_T = 2\exp\left[
-\left(\frac{T\KL(Q,P^\star)-1-\log\frac{1}{\prior\left(\mathcal{B}_{\infty}\left( \left( 1-\frac 1{TD}\right)\lawa, \frac{1}{TD}\right)\cap \Param_1 \right)} -\log\frac{1}{\alpha}}{\log\frac{1}{\varepsilon}\sqrt{2TD}}\right)^2 \right]$, then, under distribution $Q$, with probability larger than $1-\beta_T$, {\smaller
\begin{align*}
	\log \left( \prod_{t=1}^T \frac{d\hat{Q}_t(X^t)}{d\hat{P}_t(X^t)}\right) &\ge T \KL\left(Q, P^*\right)-1-\log\left(\frac{1}{\prior\left(\mathcal{B}_{\infty}\left( \left( 1-\frac 1{TD}\right)\lawa, \frac{1}{TD}\right)\cap \Param_1 \right)}\right)  -\log(1/\varepsilon)\sqrt{2 TD \log(2/\beta)}\\
	&\ge T \KL\left(Q, P^*\right)-1-\log\left(\frac{1}{\prior\left(\mathcal{B}_{\infty}\left( \left( 1-\frac 1{TD}\right)\lawa, \frac{1}{TD}\right)\cap \Param_1 \right))}\right)  \\ 
	&\hspace{2em}-\log(1/\varepsilon)\sqrt{2 TD} \left(\frac{T\KL(Q,P^\star)-1-\log\left(\frac{1}{\prior\left(\mathcal{B}_{\infty}\left( \left( 1-\frac 1{TD}\right)\lawa, \frac{1}{TD}\right)\cap \Param_1 \right))}\right) -\log\frac{1}{\alpha}}{\log\frac{1}{\varepsilon}\sqrt{2TD}}\right) \\
	&\ge \log\frac{1}{\alpha}
\end{align*} }meaning that the test has power larger than $1-\beta_T$ at step $T$: $\P_Q(\tau\geq T) \leq \beta_T$.

We therefore have, for any $\gamma \in (0,1)$, for any $T$ that satisfies $$T\KL(Q,P^\star) -1-\log\left(\frac{1}{\prior\left(\mathcal{B}_{\infty}\left( \left( 1-\frac 1{TD}\right)\lawa, \frac{1}{TD}\right)\cap \Param_1 \right)}\right)-\log\frac{1}{\alpha} \geq \gamma T \KL(Q,P^\star) , $$ we have
\begin{align*}
	\E_Q[\tau] &\leq T+ \sum_{t=T+1}^{+\infty} \beta_t \\
	&\leq T+ 2\sum_{t=T+1}^{+\infty} \exp\left[
	-\left(\frac{\gamma t\KL(Q,P^\star)}{\log\frac{1}{\varepsilon}\sqrt{2tD}}\right)^2 \right]\\
	&\leq T+ \frac{2\exp\left[ -\frac{\gamma^2 \KL(Q,P^\star)}{\left(\log \frac{1}{\varepsilon}\right)^2 2 D}T\right]}{1-\exp\left[ -\frac{\gamma^2 \KL(Q,P^\star)}{\left(\log \frac{1}{\varepsilon}\right)^2 2 D}\right]}\\
	&\leq T+\frac{2}{1-\exp\left[ -\frac{\gamma^2 \KL(Q,P^\star)}{\left(\log \frac{1}{\varepsilon}\right)^2 2 D}\right]}
\end{align*}

\section{Proof of Lemma~\ref{lem:deriv_fw}}\label{sec:proof_deriv_fw}
	\begin{align*}
		\KL(P_{W_1},(1-t)P_{W_0} +tP_{W'}) &= \E_{X\sim P_{W_1}}\left[  \log \frac{\d P_{W_1}(X)}{(1-t)P_{W_0}(X) +tP_{W'}(X)}\right] \\
		\frac{\d}{\d t} \KL(P_{W_1},(1-t)P_{W_0} +tP_{W'}) &= - \E_{P_{W_1}} \left[ \frac{ -P_{W_0} +P_{W'}}{(1-t)P_{W_0} +tP_{W'}} \right]
	\end{align*} so that, for $t=0$,
	\begin{align*} \frac{\d}{\d t} \KL(P_{W_1},(1-t)P_{W_0} + t P_{W'})\mid_{t=0} &= -\E_{P_{W_1}} \left[ \frac{ -P_{W_0} +P_{W'}}{P_{W_0}} \right] \\
	&= 1- \E_{P_{W_1}} \left[ \frac{ P_{W'}}{P_{W_0}} \right]  \end{align*} and we can conclude by noting that $\E_{P_{W_1}} \left[ \frac{ P_{W'}}{P_{W_0}} \right] = \E_{P_{W'}} \left[ \frac{ P_{W_1}}{P_{W_0}} \right]$.
	
	Finally,
	\begin{align*} \argmin_{P_{W'}} \nabla \KL (P_{W_1},P_{W_0}) ^\top P_{W'} &= \argmin_{P_{W'}} \nabla \KL (P_{W_1},P_{W_0}) ^\top (P_{W'}-P_{W_0}) \\ 
		&= \argmin_{P_{W'}} \frac{\d}{\d t} \KL(P_{W_1},(1-t)P_{W_0} + t P_{W'})\mid_{t=0}\\  
		&= \argmin_{P_{W'}} -\E_{P_{W'}} \left[ \frac{P_{W_1}}{P_{W_0}}\right]  \end{align*} and we can conclude.

\end{document}